\documentclass[12pt, a4paper]{amsart}

\usepackage[hmargin=30mm, vmargin=25mm, includefoot, twoside]{geometry}
\usepackage[bookmarksopen=true]{hyperref}

\usepackage{amsfonts,amssymb,verbatim}

\usepackage{latexsym}
\usepackage{mathrsfs}
\usepackage{stmaryrd}
\usepackage{xspace}
\usepackage{enumerate, paralist}
\usepackage{graphicx}
\usepackage[all]{xy}
\usepackage{extarrows}
\usepackage{tabu}
\usepackage{accents}
\usepackage{tensor}

\usepackage[usenames,dvipsnames]{color}

\usepackage{txfonts, pxfonts}

\usepackage{amsmath}

\usepackage[tbtags]{mathtools}

\usepackage{amsthm, thmtools}

\newtheorem{thm}{Theorem}[section]
 \newtheorem{cor}[thm]{Corollary}
 \newtheorem{lem}[thm]{Lemma}
 \newtheorem{prop}[thm]{Proposition}

\newtheorem{introthm}{Theorem}

\numberwithin{equation}{section}

 \theoremstyle{definition}
  \newtheorem{defn}[thm]{Definition}
  \newtheorem{question}{Question}[section]

 \theoremstyle{remark}
 \newtheorem{rem}[thm]{Remark}

\newtheorem*{claim*}{Claim}

\def\NN{\mathbb{N}}
\def\CC{\mathbb{C}}
\def\RR{\mathbb{R}}
\def\ZZ{\mathbb{Z}}

\def\B{\mathfrak{B}}

\def\M{\mathfrak{M}}
\def\KK{\mathfrak{K}}

\def\H{\mathcal{H}}
\def\K{\mathcal{K}}
\def\N{\mathcal{N}}

\def\G{\mathcal{G}}
\def\L{\mathcal{L}}
\def\T{\mathcal{T}}
\def\Gz{\mathcal{G}^{(0)}}

\def\A{\mathscr{A}}

\def\SOT{\mathrm{(SOT)}}
\def\Id{\mathrm{Id}}
\def\I{\mathrm{I}}
\def\Tr{\mathrm{Tr}}

\def\ppg{\mathrm{prop}}
\def\ran{\mathrm{Ran}}
\def\ker{\mathrm{Ker}}
\def\dim{\mathrm{dim}}

\def\rank{\mathrm{rank}}
\def\supp{\mathrm{supp}}

\def\girth{\mathrm{girth}}
\def\med{\mathrm{med}}

\def\m{\mathbf{m}}
\def\r{\mathbf{r}}

\begin{document}

\title{$K$-Theoretic Comparison of Roe and Quasi-Local Algebras via Projections}
\author{Kang Li}
\author{Jiawen Zhang}
\author{Jingming Zhu}

\address[Kang Li]{Centre for Mathematical Sciences, Lund University, Box 118, SE-221 00 Lund, Sweden.}
\email{kang.li@math.lth.se}

\address[Jiawen Zhang]{School of Mathematical Sciences, Fudan University, 220 Handan Road, Shanghai, 200433, China.}
\email{jiawenzhang@fudan.edu.cn}

\address[Jingming Zhu]{College of Data Science, Jiaxing University, 899 Guangqiong Road, Jiaxing, 314000, China.}
\email{jingmingzhu@zjxu.edu.cn}

\thanks{Jiawen Zhang was partly supported by the National Key R{\&}D Program of China 2022YFA100700 and NSFC 12422107.}

\thanks{Keywords: $K$-theory, Roe and quasi-local algebras, expanders, ghost operators, rank profiles.}

\begin{abstract}
A central question in higher index theory and operator algebras is whether the Roe algebra and the quasi-local algebra associated with a metric space of bounded geometry coincide, or at least have the same $K$-theory.

In this paper, we focus on a \emph{sparse} metric space $X$. We prove the following three main results: (1) For a block-diagonal operator $T$ with uniformly bounded block-rank, $T$ is quasi-local if and only if it is in the Roe algebra. (2) In general, we discover a ghost block-diagonal projection which is quasi-local but not in the Roe algebra. (3) For a sequence of expander graphs with sufficiently large girth, the inclusion of the uniform Roe algebra into the uniform quasi-local algebra induces a \emph{non-surjective} map on their $K_0$-groups. This yields the first known $K$-theoretic distinction between the uniform Roe algebra and the uniform quasi-local algebra.
\end{abstract}

\date{\today}

\maketitle

\section{Introduction}\label{sec:intro}

Roe algebras are $C^*$-algebras associated to metric spaces, which encode their coarse geometric information. They were introduced by John Roe in \cite{Roe88}, where he showed that their $K$-theory can serve as receptacles for higher indices of elliptic differential operators on open manifolds. Hence, the computation of the $K$-theory of Roe algebras becomes crucial in the study of higher index theory, and has led to significant progress in topology, geometry, and analysis (\emph{e.g.}, \cite{Roe93, Roe96}). There is also a uniform version of the Roe algebra, which equally plays a key role in higher index theory (see \cite{Spa09}).

Although the Roe algebra is important, it is usually hard to tell whether a given operator belongs to it or not. To overcome this issue, John Roe introduced the notion of quasi-local operator in \cite{Roe88}. The set of all quasi-local operators forms a $C^*$-algebra, called the quasi-local algebra, which is formally larger than the Roe algebra. As shown by Engel in \cite{Eng14}, the higher indices of elliptic pseudo-differential operators on open manifolds naturally live in the $K$-theory of the quasi-local algebra. Hence, the following question is crucial and has drawn considerable attention over the last few years:

\begin{question}\label{MainQues}
When does the Roe algebra coincide with the quasi-local algebra? 
Does the inclusion between them induce an isomorphism on their $K$-theories?
\end{question}

To be more precise, let us explain the notions for the uniform case. Let $(X,d)$ be a discrete metric space of bounded geometry, and $T \in \B(\ell^2(X))$. Define the \emph{propagation} of $T$ to be $\ppg(T):=\sup\{d(x,y): T_{x,y} \neq 0\}$, where $T_{x,y}:=\langle T\delta_y, \delta_x \rangle$. The \emph{uniform Roe algebra of $X$} is defined to be the norm closure of all operators with finite propagation in $\B(\ell^2(X))$, denoted by $C^*_u(X)$. On the other hand, $T$ is called \emph{quasi-local} if for any $\varepsilon>0$, there exists $R>0$ such that for any $A, B \subseteq X$ with $d(A,B) > R$, $\|\chi_A T \chi_B\| < \varepsilon$. The \emph{uniform quasi-local algebra of $X$} is defined to be the set of all quasi-local operators in $\B(\ell^2(X))$, denoted by $C^*_{uq}(X)$. By definition, $C^*_u(X) \subseteq C^*_{uq}(X)$ holds trivially and Question~\ref{MainQues} above concerns the opposite direction. More generally, for a Hilbert space $\H_0$, see Section~\ref{sec:preliminaries} for the definitions of the Roe algebra $C^*(X;\H_0)$ and the quasi-local algebra $C^*_q(X;\H_0)$.

On the positive side, the best known result is due to \v{S}pakula and the second-named author. In \cite{SZ20}, they showed that if the metric space $X$ has Yu's Property A (introduced in \cite{Yu00} and simplified in \cite{NWZ25, ZZ25}), then $C^*_u(X) = C^*_{uq}(X)$. (See \cite{Oza25} for an alternative proof.) This work has led to a number of applications, including the classification of Cartan subalgebras in Roe algebras \cite{WW20}, results on the rigidity problem \cite{BBFKVW22} and higher index theory for elliptic pseudo-differential operators on open manifolds \cite{Eng19}.
On the negative side, Ozawa recently showed in \cite{Oza25} that if $X$ contains a sequence of asymptotic expanders (introduced in \cite{LNSZ21}, which generalises the classical notion of expanders), then $C^*_u(X) \neq C^*_{uq}(X)$.

Concerning their $K$-theories, there are no direct answers to the best of our knowledge, except for two closely related works \cite{BCZ23, CGZ24} due to the second-named author and his collaborators. They introduced an intermediate operator algebra (called the strongly quasi-local algebra) between the Roe algebra and the quasi-local algebra, and showed that if the space can be coarsely embedded into some Hilbert space, then the Roe algebra and the strongly quasi-local algebra have the same $K$-theories. However, it is unclear whether the quasi-local algebra and the strongly quasi-local algebra always share the same $K$-theories.

To simplify the $K$-theory part of Question~\ref{MainQues}, it is sufficient to focus on sparse metric spaces. More precisely, a metric space $(X,d)$ is called \emph{sparse} if there exists a sequence of finite subspaces $\{(X_n,d_n)\}_{n\in \NN}$ such that $X = \bigsqcup_{n\in \NN} X_n$ and
\[
d(X_n, X_m) \to \infty \quad \text{as} \quad n+m \to \infty \quad \text{and} \quad n \neq m.
\]
In this case, $X$ is also called a \emph{coarse disjoint union} of $\{(X_n,d_n)\}_{n}$.
Using annulus decomposition, any metric space can be coarsely decomposed into a union of two sparse subspaces. Applying the coarse Mayer-Vietoris sequence \cite{BCZ23, HRY93}, the $K$-theory question can be reduced to the case of sparse spaces.

Following this philosophy, the first two authors and their collaborators have studied block-rank-one projections on sparse spaces. More precisely, for $X = \bigsqcup_{n\in \NN} X_n$, consider a rank-one projection $P_n$ on $\ell^2(X_n)$ and set $P:=\mathrm{(SOT)}$-$\sum_{n} P_n$. As shown in \cite{KLVZ21, LSZ23}, $P \in C^*_u(X)$ if and only if $P \in C^*_{uq}(X)$, which leads to new counterexamples to the coarse Baum-Connes conjecture. The proof follows a graph-theoretic approach by introducing the notion of measured asymptotic expanders (see also \cite{LSZ24}), showing $P \in C^*_{uq}(X)$ is equivalent to $\{X_n\}_n$ being measured asymptotic expanders. Meanwhile, parallel results have also been established in the setting of dynamics \cite{LVZ23,LVZ23b} and groupoids \cite{JZZ23, LWZ25}.

In this paper, we concentrate on higher-rank block-diagonal operators in the Roe algebra and the quasi-local algebra of sparse metric spaces as well as their $K$-theories. We provide the first example where the inclusion of the uniform Roe algebra into the uniform quasi-local algebra induces a \emph{non-surjective} map on their $K_0$-groups. Our route begins with a systematic study of block-diagonal operators, which reveals a dichotomy between bounded and unbounded block-rank. Let us introduce our results in two parts.

\subsection{Rank dichotomy for block-diagonal quasi-local operators}

First, we study block-diagonal operators with uniformly bounded block-rank. The main result is as follows.

\begin{introthm}\label{thm:main result projection}
 Let $\{(X_n,d_n)\}_{n\in \NN}$ be a sequence of finite metric spaces of uniformly bounded geometry, and $X:=\bigsqcup_{n\in \NN} X_n$ be their coarse disjoint union. Let $M \in \NN$ and $\H_0$ be a Hilbert space. For $T=\SOT$-$\sum_{n\in \NN} T_n$ in $\B(\ell^2(X;\H_0))$ with $T_n\in \B(\ell^2(X_n;\H_0))$ having rank at most $M$, we have $T \in C^*(X;\H_0)$ \emph{if and only if} $T \in C^*_q(X;\H_0)$.
\end{introthm}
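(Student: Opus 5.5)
The plan is to reduce both directions to uniform statements about the blocks $T_n$, and to use the rank bound to replace operator-norm estimates by Hilbert--Schmidt estimates.

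\textbf{Reduction to blocks.} The implication $T\in C^*(X;\H_0)\Rightarrow T\in C^*_q(X;\H_0)$ is trivial, so only the converse needs work. Since $d(X_n,X_m)\to\infty$, a block-diagonal operator lies in $C^*(X;\H_0)$ if and only if for every $\varepsilon>0$ there are $R>0$ and operators $S_n\in\B(\ell^2(X_n;\H_0))$ with $\ppg(S_n)\le R$ and $\sup_n\|T_n-S_n\|<\varepsilon$. Here one uses that finitely many blocks can be handled separately, and that compressing a finite-propagation approximant to the diagonal blocks does not increase the norm or the propagation. Likewise, $T\in C^*_q(X;\H_0)$ means that the family $\{T_n\}$ is uniformly quasi-local.

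\textbf{Normalisation by functional calculus.} Since $C^*_q(X;\H_0)$ is a $C^*$-algebra, I may apply continuous functional calculus to $T$ blockwise without leaving the class of block-diagonal quasi-local operators of block-rank at most $M$. Fix $\delta>0$ and a continuous function $f$ that vanishes on $[0,\delta/2]$ and equals $t\mapsto 1/t$ on $[\delta,\infty)$. Then $T'=Tf(T^*T)T^*T$ is within $O(\delta)$ of $T$, and its nonzero singular values on each block lie in $[\delta/2,\|T\|]$.

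Take singular value decompositions $T_n=\sum_{i\le M}s_{n,i}\,\xi_{n,i}\otimes\eta_{n,i}^*$ and set $P_n=\sum_i\xi_{n,i}\xi_{n,i}^*$ and $Q_n=\sum_i\eta_{n,i}\eta_{n,i}^*$. These range and co-range projections are again obtained by functional calculus (for example $Q_n=\lim_k g_k(T_n^*T_n)$ with the spectral gap), so $P=\bigoplus P_n$ and $Q=\bigoplus Q_n$ are quasi-local projections of block-rank at most $M$. Since $T=PTQ$, it therefore suffices to show that block-diagonal quasi-local projections of block-rank at most $M$ lie in $C^*(X;\H_0)$.

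\textbf{Key estimate, via Hilbert--Schmidt norms.} Let $P_n$ be such a projection and write $\mu_n(x)=\|P_n\chi_{\{x\}}\|_{HS}^2$. Then $\mu_n$ is a measure on $X_n$ of total mass $\rank P_n\le M$. For subsets $A,B\subseteq X_n$ the operator $\chi_AP_n\chi_B$ has rank at most $M$, hence $\|\chi_AP_n\chi_B\|_{HS}\le\sqrt M\,\|\chi_AP_n\chi_B\|$. Quasi-locality therefore gives uniform Hilbert--Schmidt smallness of $\chi_AP_n\chi_B$ whenever $d(A,B)>R$.

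I would first try to prove the following: for every $\varepsilon$ there is $R$ such that $\sum_{d(x,y)>R}\|(P_n)_{x,y}\|_{HS}^2<\varepsilon$ for all $n$. Granted this, let $S_n$ be the truncation of $P_n$ to entries with $d(x,y)\le R$. Then $\|P_n-S_n\|\le\|P_n-S_n\|_{HS}<\sqrt\varepsilon$ and $\ppg(S_n)\le R$, which proves the theorem.

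To obtain the estimate, I would use a partition of unity adapted to $\mu_n$. Choose a maximal $R'$-separated set in $X_n$ and group it, using bounded geometry, into boundedly many families $\mathcal F_1,\dots,\mathcal F_N$ of $3R'$-separated balls. For each family, the far-off-diagonal part splits into pieces $\chi_{B}P_n\chi_{X_n\setminus N_{R}(B)}$. Orthogonality of the rows across the separated balls lets one sum their Hilbert--Schmidt norms, and one should compare with $\|\chi_{\bigcup B}P_n\chi_{\text{far}}\|_{HS}$. A direct comparison fails, however, because "far" depends on $B$. This is where I would instead argue by contradiction in the spirit of the measured asymptotic expander characterisation of \cite{KLVZ21, LSZ23}.

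\textbf{Contradiction argument and main obstacle.} If the estimate fails, $\mu_n$ has a definite amount of far-off-diagonal mass at every scale. Using the rank bound, some unit vector $\xi$ in the range of $P_n$ then yields a measured ``expanding'' set: a set $A$ with $\mu_n(A)$ and $\mu_n(X_n\setminus N_R(A))$ both bounded below and $\|\chi_AP_n\chi_{X_n\setminus N_R(A)}\|$ not small. This contradicts quasi-locality. The main obstacle is this last step, namely the passage from rank one (where \cite{KLVZ21, LSZ23} applies) to rank $M$. I expect to handle it by induction on $M$: split off an approximately localized rank-one subprojection by functional calculus on $\chi_AP_n\chi_A$, and apply the rank-one result to it.
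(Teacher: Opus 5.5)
\textbf{There is a genuine gap.} The key estimate you aim for is false, and no argument that truncates matrix entries can prove this theorem.

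Let $\{X_n\}$ be expander graphs and $P_n$ the projection onto the constants in $\ell^2(X_n)$. Then $P=\SOT$-$\sum_n P_n=f(\Delta)$ for the graph Laplacian $\Delta$, where $f$ is continuous with $f(0)=1$ and $f=0$ on $[c,\infty)$, and $c$ is the spectral gap. Hence $P\in C^*_u(X)$ is a block-rank-one quasi-local projection. Here $\mu_n$ is uniform, so for every fixed $R$
\[
\sum_{d(x,y)>R}|(P_n)_{x,y}|^2=\mu_n\otimes\mu_n\bigl(\{(x,y):d(x,y)>R\}\bigr)\geq 1-\frac{\sup_x|B(x,R)|}{|X_n|}\longrightarrow 1 .
\]
Truncation fails even in operator norm. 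The cut-off $S_n$ of $P_n$ to $\{d(x,y)\le R\}$ satisfies $\|S_n\|\le\sup_x|B(x,R)|/|X_n|\to 0$, so $\|P_n-S_n\|\to 1$. The theorem has content precisely for ghost projections like this on spaces without Property A.

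Your contradiction step also inverts the logic of \cite{KLVZ21, LSZ23}. There, a block-rank-one projection is quasi-local \emph{exactly} when the $\mu_n$ form measured asymptotic expanders. That means the mass is spread out, so that no $A$ has both $\mu_n(A)$ and $\mu_n(X_n\setminus\N_R(A))$ bounded below. Large far-off-diagonal Hilbert--Schmidt mass does not produce such an $A$; in the example it coexists with quasi-locality. The paper instead reduces everything to block-rank one and invokes \cite[Theorem 6.6]{LSZ23}, whose Roe approximants come from functional calculus, not from cutting off entries.

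\textbf{The reduction to projections has two further gaps.}

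First, no single continuous function produces a uniform spectral gap. In your $T'$, a singular value $s$ with $s^2$ slightly above $\delta/2$ becomes $s\cdot s^2f(s^2)$, which can be arbitrarily small. More generally, $\overline{\bigcup_n\sigma(|T_n|)}$ may be all of $[0,\|T\|]$. Then every nonzero $h(|T|)$ with $h$ continuous and $h(0)=0$ has nonzero spectrum accumulating at $0$. The rank bound must enter here, as in Proposition~\ref{prop:global reduction} and Theorem~\ref{thm:peeling}:
\begin{enumerate}
\item choose $M$ disjoint intervals in each dyadic window;
\item observe that each block misses one of them;
\item split the blocks into finitely many classes, each with a genuine gap.
\end{enumerate}

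Second, $T=PTQ$ with $P,Q\in C^*(X;\H_0)$ does not by itself give $T\in C^*(X;\H_0)$, since the Roe algebra is not an ideal in $C^*_q(X;\H_0)$. The paper passes instead to partial isometries $V$ and the projection $\frac12\begin{pmatrix}V^*V&V^*\\ V&VV^*\end{pmatrix}$.

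Your last idea, functional calculus on $P_n\chi_AP_n$, is in fact the heart of the paper. Lemma~\ref{lem:forcing} shows that on a bad block $\m(A)$ has eigenvalues on both sides of $(\varepsilon^2/4,\varepsilon^2)$. But this must be used uniformly over blocks, via the pigeonhole above, and iterated with $\varepsilon_k=2^{-k}$. The output is a block-rank-one quasi-local subprojection, to which the rank-one theorem applies, not a Hilbert--Schmidt localisation estimate.
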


Note that when $M=1$ and each $T_n$ is a projection, Theorem~\ref{thm:main result projection} recovers \cite[Theorem C]{KLVZ21} and \cite[Theorem 6.1 and Theorem 6.6]{LSZ23}.
The key ingredient in the proof is the following decomposition result (see Theorem~\ref{thm:decomposition of projections} for the projection case and Theorem~\ref{thm:peeling} for the general case):

\begin{introthm}\label{introthm:decomposition}
Let $\{(X_n,d_n)\}_{n\in \NN}$ be a sequence of finite metric spaces of uniformly bounded geometry, and $X:=\bigsqcup_{n\in \NN} X_n$ be their coarse disjoint union. Let $M \in \NN$ and $\H_0$ be a Hilbert space. Let $P=\SOT$-$\sum_{n\in \NN} P_n$ be a projection in $C^*_q(X;\H_0)$, where each $P_n \in \B(\ell^2(X_n;\H_0))$ and $1 \leq \rank(P_n) \leq M$. Then there is a block-rank-one projection $Q=\SOT$-$\sum_{n\in \NN} Q_n$ in $C^*_q(X;\H_0)$ such that $Q \leq P$. Hence, $P$ can be decomposed as $P=P^{(1)}+\cdots+P^{(M)}$, where each $P^{(i)}\in C^*_q(X;\H_0)$ is a projection with block-rank at most one. A similar result holds for general block-rank-$M$ operators.
\end{introthm}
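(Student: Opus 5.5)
Given a quasi-local projection $P=\SOT$-$\sum_n P_n$ with $1\le \rank(P_n)\le M$, I will produce a quasi-local block-rank-one subprojection $Q\le P$. The main tool is a \emph{quasi-locality profile} for vectors. For a unit vector $v\in\ran(P_n)$ and $R>0$, set
\[
\eta_R(v):=\sup\{\|\chi_A Q_v\chi_B\| : A,B\subseteq X_n,\ d(A,B)>R\},
\]
where $Q_v$ is the rank-one projection onto $\CC v$. A block-rank-one projection $Q=\sum_n Q_{v_n}$ is quasi-local exactly when $\sup_n\eta_R(v_n)\to 0$ as $R\to\infty$. This holds because the blocks are far apart: only the pieces of $A,B$ inside a single $X_n$ contribute for large $R$, so the norm is a supremum over $n$. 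So the task is to choose unit vectors $v_n\in\ran(P_n)$ with this uniform decay.

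I will argue by induction on $M$. The case $M=1$ is trivial with $Q=P$. For the inductive step, compare $P$ with its restrictions. Since $\chi_A P\chi_B=\sum_n \chi_A P_n\chi_B$, quasi-locality of $P$ gives, for every $\varepsilon$, an $R$ with $\|\chi_A P_n\chi_B\|<\varepsilon$ uniformly in $n$. Write $P_n=\sum_{i\le m_n} w_i w_i^*$ for an orthonormal basis of $\ran(P_n)$, with $m_n\le M$.

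The key step, and the main obstacle, is to extract one good vector from a rank-$\le M$ quasi-local projection. I will try a ``concentration'' choice. For each $n$, the operator $P_n$ has localized range: for any partition $X_n=A\sqcup B$ with $d(A,B)>R$, the cross terms $\chi_AP_n\chi_B$ are small. Consider the positive compression $\chi_A P_n\chi_A$ and take a unit vector $v_n\in \ran(P_n)$ maximizing $\|\chi_{D}v_n\|$ over some ``dominant region'' $D$. Then I will show that the failure of $\eta_R(v_n)\to0$ uniformly forces two nearly orthogonal mass-carrying pieces of $\ran(P_n)$ separated by distance $R$. At that point I split $P_n\approx P_n'+P_n''$ into subprojections supported near these pieces, each of rank $<\rank P_n$, and these are still quasi-local up to $\varepsilon$. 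The induction then applies to the piece with larger mass.

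To make this uniform, I expect to need a compactness/pigeonhole argument in the finite-dimensional Grassmannian of rank $\le M$. Concretely, I will use the fact that $\|\chi_AP_n\chi_B\|<\varepsilon$ implies that the eigenvectors of $\chi_AP_n\chi_A$ with eigenvalue in $(\sqrt\varepsilon,1-\sqrt\varepsilon)$ span little. A spectral gap will then let me replace approximate subprojections by genuine ones, with a perturbation of order $\sqrt\varepsilon$ (or $M\sqrt\varepsilon$). The delicate part is choosing the scales $R_k\to\infty$ and $\varepsilon_k\to0$ so that these dependent choices are consistent across all $n$. I would handle this with a diagonal argument over a sequence $\varepsilon_k$: choose $v_n$ to minimize a weighted sum $\sum_k 2^{-k}\eta_{R_k}(v)/\varepsilon_k$, using compactness of the unit sphere of $\ran(P_n)$ to guarantee that a minimizer exists.

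Finally, the decomposition $P=P^{(1)}+\dots+P^{(M)}$ follows by iteration. Once $Q\le P$ is obtained, $P-Q$ is again a quasi-local block-diagonal projection, since quasi-local operators form an algebra. Its blocks have rank $\le M-1$, and blocks of rank $0$ are allowed: we apply the result only on the set of $n$ where the rank is positive. Peeling off $M$ times gives the decomposition. For general block-rank-$M$ operators $T$, I would apply the same argument to the support projections of $|T|$ and $|T^*|$. This requires controlling quasi-locality of these projections, which needs uniform lower bounds on the nonzero singular values; otherwise one must first truncate by functional calculus $f(T^*T)$ with small error.
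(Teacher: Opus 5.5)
Your reduction is right. A block-rank-one $Q=\SOT\text{-}\sum_n Q_{v_n}$ lies in $C^*_q(X;\H_0)$ if and only if $\sup_n\eta_R(v_n)\to0$, because $\|\chi_AQ_{v}\chi_B\|=\|v_A\|\,\|v_B\|$. The final peeling-off of $M$ sub-projections is also fine. The gap is the core step, which you never actually carry out: producing, \emph{uniformly in $n$}, a genuinely quasi-local sub-projection of strictly smaller rank on the bad blocks. The mechanisms you sketch do not achieve this.

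First, your spectral-gap claim for $\chi_AP_n\chi_A$ holds only for a genuine partition $X_n=A\sqcup B$ with $d(A,B)>R$. In that case $\chi_AP_n\chi_A-(\chi_AP_n\chi_A)^2=\chi_AP_n\chi_BP_n\chi_A$ is small. Such partitions usually do not exist (e.g.\ for connected graphs). With a buffer $C=X_n\setminus(A\cup B)$ there is no gap: for $P_n=vv^*$ with $\|v_A\|^2=\|v_C\|^2=\frac12$ and $v_B=0$, you get $\chi_AP_n\chi_B=0$ while $\chi_AP_n\chi_A$ has eigenvalue $\frac12$.

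Second, sub-projections that are only ``quasi-local up to $\varepsilon$'' at one scale, or that come from a norm perturbation of size $\sqrt\varepsilon$, are not elements of $C^*_q(X;\H_0)$. So they cannot be fed into an induction whose hypothesis is genuine quasi-locality. Likewise, block-by-block choices of spectral cuts give no control of the resulting global projection.

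Third, minimising $\sum_k 2^{-k}\eta_{R_k}(v)/\varepsilon_k$ yields uniform decay only if you already know that every $\ran(P_n)$ contains vectors with uniformly controlled profile. That is exactly what has to be proved.

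The missing idea, which is where the bound $M$ genuinely enters, is a \emph{uniform} spectral gap obtained by pigeonhole. The paper writes $\|\chi_AP_U\chi_B\|=\|\m(A)^{1/2}\m(B)^{1/2}\|$ with $\m(A)=W^*\chi_AW$. It then shows (Lemma~\ref{lem:forcing}) the following: if $P$ is $(\varepsilon^2/16,R)$-quasi-local and a block is $(\varepsilon,R)$-bad, witnessed by $u_n,A_n,B_n$, then $\m(A_n)$ has an eigenvalue $<\varepsilon^2/4$ and one $\geq\varepsilon^2$.

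Since $\m(A_n)$ has at most $M$ eigenvalues, one of $M-1$ \emph{fixed} disjoint intervals in $(\varepsilon^2/4,\varepsilon^2)$ misses its spectrum. Group the bad blocks into $M-1$ classes $\Gamma_j$ accordingly. Then each operator $\SOT\text{-}\sum_{n\in\Gamma_j}P_n\chi_{A_n}P_n\in C^*_q(X;\H_0)$ has a gap at a common interval. So a suitable continuous function of it is a genuinely quasi-local projection, non-zero of strictly smaller rank on every bad block, while good blocks are kept.

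Iterating $M-1$ times makes all blocks $(\varepsilon,S)$-good. Doing this for $\varepsilon_k=2^{-k}$ gives nested sub-projections that stabilise blockwise, and any unit vector in the intersection yields $Q$.

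Your treatment of general operators has the same gap: truncation by $f(T^*T)$ with small error does not give an exact decomposition. The paper instead groups blocks by $\|T_n\|\in(2^{-k-1},2^{-k}]$. It again pigeonholes $M$ intervals in $(2^{-k-2},2^{-k-1})$ to get quasi-local spectral projections of $|T_n|$. It then sums the norm-convergent series $\sum_k\sum_j T^{(k,j)}Q^{(k,j)}$, which removes exactly one rank per block.
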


According to \cite[Theorem 6.6]{LSZ23}, block-rank-one quasi-local projections belong to the Roe algebra. Hence the projection case in Theorem~\ref{thm:main result projection} follows directly from Theorem~\ref{introthm:decomposition}. On the other hand, Theorem~\ref{introthm:decomposition} shows that if a block-rank-$M$ projection $P$ belongs to the Roe algebra, then it contains a block-rank-one sub-projection, which also sits in the Roe algebra. This provides an affirmative answer to \cite[Question 5.31]{Vig26} and a partial answer to \cite[Question 6.1]{BFV24}.

Let us briefly explain the idea of the proof for Theorem~\ref{introthm:decomposition}. First, as in the block-rank-one case, for a discrete metric space $X$, a rank-$M$ projection $P$ in $\B(\ell^2(X))$ and $A,B \subseteq X$, we compute
\[
    \|\chi_A P \chi_B\| = \left\|\m(A)^{1/2} \m(B)^{1/2}\right\|.
\]
Here $\m(A)$ is the Gram matrix of the restrictions of an orthonormal basis of $\ran(P)$. This turns quasi-locality into a finite-dimensional spectral condition: a bad block forces a uniform spectral gap, while the uniform rank bound allows us to choose finitely many such gaps and partition all bad blocks accordingly. Continuous functional calculus then produces a quasi-local sub-projection of strictly smaller rank on every bad block, and iterating this reduction yields a genuine block-rank-one quasi-local sub-projection.

The uniform bound on the block-ranks is essential. We now turn to block-diagonal projections with unbounded ranks. We show that they behave differently from those with bounded ranks and can be used to distinguish the uniform Roe and quasi-local algebras. More precisely, we show the following:

\begin{introthm}\label{prop:infiniteprojection intro}
 Let $\{X_n\}_{n \in \mathbb{N}}$ be a sequence of asymptotic expanders and $X = \bigsqcup_{n=1}^{\infty} X_n$ be their coarse disjoint union. Then there exists a ghost projection $P = \SOT$-$\sum_{n\in \NN} P_n$ where each $P_n$ is a projection on $\ell^2(X_n)$ such that $P \in C_{uq}^*(X)$, while $P \notin C_{u}^*(X)$.
\end{introthm}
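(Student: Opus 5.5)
The plan is to build $P$ as a block-diagonal ghost projection of \emph{unbounded} block-rank. By Theorem~\ref{thm:main result projection}, no counterexample with uniformly bounded block-rank exists, so $\rank(P_n)\to\infty$ is forced. I would split the argument into three parts: a construction, a proof of quasi-locality, and a proof of non-membership in $C^*_u(X)$. The intended source of the phenomenon is Ozawa's theorem \cite{Oza25} that asymptotic expanders force $C^*_u(X)\neq C^*_{uq}(X)$. I would revisit his argument to extract a concrete quasi-local operator $T=\SOT$-$\sum_n T_n$, self-adjoint and block-diagonal, which is not in $C^*_u(X)$. Then I would convert $T$ into a projection.

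\emph{Step 1: passing to a projection.} Since $C^*_{uq}(X)$ is a $C^*$-algebra, continuous functional calculus preserves quasi-locality. I would modify $T$, for instance by replacing $T_n$ with $(1-\pi_n)T_n(1-\pi_n)$ where $\pi_n$ is the averaging projection on $X_n$, and then rescale and shift so that the spectra of the $T_n$ avoid a fixed interval $(a,b)$ uniformly in $n$. Taking $f$ continuous with $f=0$ below $a$ and $f=1$ above $b$, the operator $P:=f(T)=\SOT$-$\sum_n f(T_n)$ is a quasi-local projection. The uniform gap will be engineered by a doubling trick: form $\begin{pmatrix}T&(1-T^2)^{1/2}\\ (1-T^2)^{1/2}&-T\end{pmatrix}$ after a rescaling with $\|T\|\le 1$, so that the result is a symmetry. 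Then I would transfer back to $\ell^2(X)$ by identifying $X_n\sqcup X_n$ with a coarsely equivalent copy inside a modified sparse space; alternatively I would work directly with a gapped $T$ if Ozawa's operator already has one.

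\emph{Step 2: the ghost property.} The entries of $T_n$ from Ozawa's construction are built from normalized averaging over large sets in $X_n$, so they tend to $0$. After removing the non-ghost rank-one part $\pi_n$, I would check that $\sup_{x,y\in X_n}|f(T_n)_{x,y}|\to 0$. The check uses $\|f(T_n)\delta_x\|^2=f(T_n)_{x,x}$ and bounds the diagonal, by approximating $f$ by polynomials in $T_n$ and using that polynomials in ghosts are ghosts.

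\emph{Step 3: $P\notin C^*_u(X)$.} This is the main obstacle. I would argue by contradiction using the asymptotic-expander structure, as in \cite{Oza25}. Suppose $\|P-S\|<\varepsilon$ for some $S$ of propagation at most $R$. For asymptotic expanders, any set $A\subseteq X_n$ with $|A|\ge\delta|X_n|$ has a large $R$-neighbourhood in proportion. Combining this with the ghost property, I would show that finite-propagation operators cannot approximate a projection which has large ranks yet moves mass between far-apart regions, such as those $A,B$ exhibited by the non-uniform expansion. Concretely, I would compute $\Tr(\chi_A P\chi_A)$ in two ways. Comparing it with $\Tr(\chi_A S\chi_A)$ gives a contradiction with a rank count, which is where unboundedness of $\rank(P_n)$ enters. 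I expect this trace and rank estimate, carried out uniformly in $n$, to be the delicate point. If it fails, my fallback is to invoke directly the Ozawa-type criterion characterizing ghost elements of $C^*_u(X)$ on asymptotic expanders, applied to $P$.
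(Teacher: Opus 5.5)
Your proposal has genuine gaps in both Step 1 and Step 3.

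\textbf{Step 1 does not produce the required object.} The projection $\frac12(1+U)$ built from the symmetry $U=\begin{pmatrix}T&(1-T^2)^{1/2}\\(1-T^2)^{1/2}&-T\end{pmatrix}$ has $(1,1)$-corner $\frac12(1+T)$. When $T$ is a ghost, its diagonal entries tend to $\frac12$, so this projection is never a ghost. It also lives on $\ell^2(X_n)\otimes\CC^2$, which has dimension $2|X_n|$ and cannot be put back on $\ell^2(X_n)$; changing the sparse space changes the theorem. Without this trick you have no uniform spectral gap, and blockwise spectral projections of a gapless quasi-local operator need not be quasi-local.

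\textbf{Step 3 contains no argument.} Even a gapped cut $f(T)$ need not inherit $T\notin C^*_u(X)$, which is why you need Step 3 at all. But $\|P-S\|<\varepsilon$ only gives $|\Tr(\chi_A(P-S)\chi_A)|\le\varepsilon|A|$. Nothing about $\Tr(\chi_AS\chi_A)$ for finite-propagation $S$ contradicts a rank count. Nor is there a known ``Ozawa-type criterion'' characterising the ghosts of $C^*_u(X)$ on asymptotic expanders. Proving non-membership from scratch amounts to re-proving Ozawa's Theorem A, which is the deep input here.

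\textbf{The missing idea: work inside the whole embedded algebra.} Ozawa's construction gives subspaces $V(n)\subseteq\ell^2(X_n)$ with $\prod_n\B(V(n))\subseteq C^*_{uq}(X)$. For $\|u\|\le 1$ and large $n$, the bound $|(u_n)_{x,y}|\le\|P_{V(n)}\chi_{\{y\}}\|<\varepsilon_k$ shows the entire product consists of ghosts (Proposition~\ref{prop:Ozawa's Theorem B}). So every projection of $\prod_n\B(V(n))$ is automatically a quasi-local ghost, with no spectral gap needed. This includes every blockwise Borel spectral projection of an element of the product.

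For non-membership, Theorem A of \cite{Oza25} says this copy of $\prod_n\M_n(\CC)$ is not contained in $C^*_u(X)$. Pick a positive contraction $A$ in it with $A\notin C^*_u(X)$. The step functions $\varphi^{(k)}$ give $\|\varphi^{(k)}(A)-A\|\le 1/k$. Each $\varphi^{(k)}(A)$ is a finite linear combination of the projections $\chi_{((i-1)/k,i/k]}(A)\in\prod_n\B(V(n))$. Since $C^*_u(X)$ is norm-closed, one of these projections lies outside $C^*_u(X)$, and it is the desired $P$.
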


The proof relies on Ozawa's recent work \cite{Oza25}, where he showed that the direct product of the matrix algebras $\prod_n \M_n(\CC)$ can never be embedded into any uniform Roe algebra as a $C^*$-subalgebra, while the product can be embedded into the uniform quasi-local algebra for the space $X$ above. Here $\M_n(\CC)$ denotes the $n$-by-$n$ complex matrix algebra.
We notice that the image of the embedded direct product of the matrix algebras in the uniform quasi-local algebra consists of ghost operators (see Definition~\ref{defn:ghost}). Using functional calculus for each matrix in a uniform way, we deduce Theorem~\ref{prop:infiniteprojection intro}. Thus the uniform bound on the block-ranks marks a genuine rigidity threshold for block-diagonal quasi-local operators.

\subsection{$K$-theoretic comparison of Roe and quasi-local algebras}

The dichotomy above shows that projections can be used to tell the difference between the uniform Roe algebra and the uniform quasi-local algebra. However, this is not sufficient to distinguish their $K$-theories: a projection in $C^*_{uq}(X)\setminus C^*_u(X)$ may still represent a $K_0$-class lying in the image of $K_0(C^*_u(X))$.

Our main $K$-theoretic result explores the structure of their $K$-theories and provides the first example where the inclusion $C^*_u(X) \hookrightarrow C^*_{uq}(X)$ does \emph{not} induce an isomorphism on their $K_0$-groups. This gives a negative answer to the $K$-theory part of Question~\ref{MainQues}. Recall that the girth of a graph $X$, denoted by $\girth(X)$, is the length of the shortest non-trivial cycle in $X$.

\begin{introthm}\label{thm:main result K-theory}
Let $\{X_n\}$ be a sequence of expander graphs such that there exists $\gamma>0$ satisfying $\girth(X_n)\geq\gamma\log |X_n|$ for sufficiently large $n\in \NN$, and $X$ be their coarse disjoint union. Then the map
\[
\iota_\ast: K_0(C^*_u(X)) \longrightarrow K_0(C^*_{uq}(X)),
\]
induced by the inclusion $\iota: C^*_u(X) \hookrightarrow C^*_{uq}(X)$, is \emph{not} surjective.
\end{introthm}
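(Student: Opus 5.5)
The plan is to find a homomorphic invariant of $K_0(C^*_{uq}(X))$ that takes only very restricted values on the image of $\iota_\ast$, and then to exhibit a projection in $C^*_{uq}(X)$ whose invariant lies outside that range. The invariant is the \emph{rank profile}. Since $X$ is a coarse disjoint union, any quasi-local $T$ satisfies $\chi_{X_n} T \chi_{X_m}\to 0$ as $n+m\to\infty$ with $n\neq m$, and the off-diagonal part is compact. Hence the compression $T\mapsto (\chi_{X_n}T\chi_{X_n})_n$ gives a $*$-homomorphism
\[
\Phi\colon C^*_{uq}(X)\longrightarrow \prod_n \M_{|X_n|}(\CC)\Big/\bigoplus_n \M_{|X_n|}(\CC).
\]
On $K_0$ this induces a map $\rho\colon K_0(C^*_{uq}(X))\to \prod_n\ZZ/\bigoplus_n\ZZ$. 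Concretely, a projection $p\in \M_k(C^*_{uq}(X))$ is sent to the class of the sequence $(\rank(p_n))_n$, where $p_n$ is the spectral projection of the compressed block near $1$. Thus $\rho$ records block ranks modulo finitely many blocks.

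The central step is to constrain $\rho\circ\iota_\ast$ using the large girth. Take a projection $p\in \M_k(C^*_u(X))$ and approximate it by a finite-propagation $a$ with $\|a-p\|$ small. Choose $n$ large enough that $\girth(X_n)\geq\gamma\log|X_n|$ is much larger than $\ppg(a)$. Then $a$ restricted to $X_n$ is locally a tree operator: every ball of radius comparable to $\ppg(a)$ lifts isometrically to the universal cover $\widetilde X_n$, which is a tree. Following the Willett--Yu analysis of large-girth expanders, I would lift $a$ to an operator $\tilde a$ of the same propagation on the disjoint union of the covering trees. The tree has property A, and its uniform Roe algebra carries the standard trace-like functionals coming from the covering group action. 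I then want to show
\[
\rank(p_n)=\Tr(\chi_{X_n}p\chi_{X_n}) = \tau(p)\,|X_n| + b_n,
\]
where $\tau(p)$ is a number independent of $n$ and $(b_n)_n$ is \emph{bounded}. The linear term comes from averaged diagonal entries computed via the lift. The bounded correction is the part detected by the local traces of ghost-type components, and it is at most of order $k$ in the style of Willett--Yu's ghost projection computations. Consequently every element of the image of $\rho\circ\iota_\ast$ is represented by a sequence $c_n$ with $c_n - \tau|X_n|$ bounded for a single real $\tau$. Differences of classes inherit the same form.

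Next I construct a quasi-local projection violating this constraint. Large-girth expanders are in particular asymptotic expanders, so Ozawa's embedding $\prod_n\M_{k_n}(\CC)\hookrightarrow C^*_{uq}(X)$, used in Theorem~\ref{prop:infiniteprojection intro}, is available. Running that argument with block sizes chosen as in its proof produces a ghost projection $P=\SOT$-$\sum_nP_n\in C^*_{uq}(X)$. The extra requirement is to arrange $\rank(P_n)=r_n$ with $r_n\to\infty$ but $r_n/|X_n|\to 0$, for instance $r_n\approx \sqrt{|X_n|}$. This should be possible by applying functional calculus uniformly in $n$ to the image of a projection of the chosen rank in each matrix factor.

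Now suppose $[P]=\iota_\ast(x)$. Then $r_n=\tau|X_n|+b_n$ modulo finitely many $n$. Dividing by $|X_n|$ forces $\tau=0$, and then $r_n$ is bounded, a contradiction. So $\iota_\ast$ is not surjective.

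The main obstacle is the middle step, namely proving that on the image of $K_0(C^*_u(X))$ the rank profile is ``linear plus bounded''. This requires a careful lifting to the covering trees with quantitative control of the error in the trace over $X_n$. The error is a sum of $|X_n|$ small diagonal perturbations, and it must be controlled via the spectral gap of the expander rather than estimated term by term. I would first try to reduce to projections of the form $p = q + g$, with $q$ lifted from the tree and $g$ a ghost, and then use the expander spectral gap to show that ghost projections in $\M_k(C^*_u(X))$ have block rank at most $k$ eventually.
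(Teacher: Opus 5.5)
\textbf{The central step is false, not just unproven.} Since $\ell^\infty(X)\subseteq C^*_u(X)$, for any $Y\subseteq X$ the multiplication operator $\chi_Y$ is a propagation-zero projection in $C^*_u(X)$. Its rank profile is $(|Y\cap X_n|)_n$, which can be any sequence with $0\le c_n\le |X_n|$. Choose $Y$ with $|Y\cap X_n|=\rank(P_n)$; then $\rho(\iota_\ast[\chi_Y])=\rho([P])$. So no law of the form $\tau|X_n|+O(1)$ holds on $\rho(\iota_\ast(K_0(C^*_u(X))))$. More generally, no argument that only looks at the rank profile of $[P]$ can show $[P]\notin \iota_\ast(K_0(C^*_u(X)))$. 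You must use that $P$ is a ghost.

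The missing idea is the quotient isomorphism $C^*_u(X)/G_u(X)\cong C^*_{uq}(X)/G_{uq}(X)$ of Theorem~\ref{thm:quotient isom intro}. The paper proves it with limit operators, whose limit spaces are trees by the girth assumption, together with the Haagerup--Steenstrup--Szwarc estimates, which give a uniform truncation of quasi-local operators on trees. With this isomorphism, $\iota_\ast(y)=[P]$ gives $(\pi_{uq})_\ast[P]=0$, hence $(\pi_u)_\ast(y)=0$, so $y=(i_u)_\ast(x)$ for some $x\in K_0(G_u(X))$. Only for such ghost classes does the Willett--Yu lifting give anything. The lift annihilates ghosts, so a finite-propagation approximant $a$ of $p-q$ eventually has lifts of norm at most $1/4$. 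Comparing $\Tr(a_n^{2m_n})$ from both sides, with $m_n\approx \girth(X_n)/(2\ppg(a))$, then bounds the rank profile.

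\textbf{The rank choice $r_n\approx\sqrt{|X_n|}$ is also not safe.} The bound just described (Proposition~\ref{prop:Roe ghost}) is only a power saving $|s_n|\le C|X_n|^{1-\alpha}$, with $\alpha>0$ depending on the class. Your hoped-for bound ``ghost projections in $\M_k(C^*_u(X))$ eventually have block rank at most $k$'' is not available in general. Suppose $X_n$ contains $m_n\to\infty$ disjoint induced subgraphs, of sizes tending to infinity, with a uniform spectral gap. Apply functional calculus to the propagation-one direct sum of their Laplacians and subtract a diagonal projection. This produces a ghost projection in $C^*_u(X)$ of block rank $m_n$. So you need a rank profile that beats every power, such as $\lfloor |X_n|/\log|X_n|\rfloor$.

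Ozawa's embedding gives no control of $\dim V(n)$ relative to $|X_n|$. Getting such a profile therefore requires the quantitative random-subspace estimate $\|P\chi_E\|\le C\sqrt{r/d+(|E|/d)\log(ed/|E|)}$ of Proposition~\ref{prop:ql subspace}. Combined with expansion, it yields a quasi-local ghost projection with $\rank(P_n)=r_n$ for any $r_n=o(|X_n|)$.
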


Thanks to \cite{LPS88}, such expander graphs do exist. More precisely, \cite{LPS88} implies that there exists a sequence of $k$-regular Ramanujan graphs $\{X_n\}_{n\in \NN}$ for some $k \in \NN$ (for example, $k=6$) with girth satisfying:
\[
\girth(X_n)\geq \log_{k-1}|X_n| \quad \text{for sufficiently large} \quad n \in \NN.
\]
This establishes non-vacuity of our assumptions. However, it is unclear to us whether Theorem~\ref{thm:main result K-theory} holds in the non-uniform case.

The proof of Theorem \ref{thm:main result K-theory} is built on two key ingredients. The first is a quotient isomorphism showing that, in the large girth setting, the difference between the Roe and quasi-local algebras lies in their ghost ideals. For a discrete metric space $X$, denote the set of ghost operators in the Roe algebra $C^*(X;\H_0)$ by $G(X;\H_0)$, and those in the quasi-local algebra $C^*_{q}(X;\H_0)$ by $G_{q}(X;\H_0)$. We show the following:

\begin{introthm}\label{thm:quotient isom intro}
Let $\{X_n\}$ be a sequence of graphs with bounded valency and large girth, $X$ be their coarse disjoint union, and $\H_0$ be a Hilbert space. Then the inclusion from $C^*(X;\H_0)$ to $C^*_{q}(X;\H_0)$ induces a $*$-isomorphism $C^*(X;\H_0)/G(X;\H_0) \cong C^*_{q}(X;\H_0)/G_{q}(X;\H_0)$.
\end{introthm}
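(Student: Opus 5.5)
The map $C^*(X;\H_0)/G(X;\H_0)\to C^*_q(X;\H_0)/G_q(X;\H_0)$ is a well-defined $*$-homomorphism, since $G(X;\H_0)=G_q(X;\H_0)\cap C^*(X;\H_0)$. The same identity gives injectivity at once: if $T\in C^*(X;\H_0)$ is ghost as an element of $C^*_q$, then it is ghost in $C^*$. So everything reduces to surjectivity. We must show that every quasi-local $T$ can be written as $S+G$ with $S\in C^*(X;\H_0)$ and $G$ a ghost.

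\textbf{Reduction to block-diagonal operators.} Since $X$ is sparse and $T$ is quasi-local, the off-diagonal part $T-\sum_n \chi_{X_n}T\chi_{X_n}$ can be treated as follows. For every $\varepsilon$, quasi-locality together with $d(X_n,X_m)\to\infty$ lets us split it into a finite-rank piece, living on finitely many blocks, plus a piece of norm at most $\varepsilon$. Hence it is compact, so it lies in $C^*(X;\H_0)$ (assuming $\H_0$ is such that the compacts sit inside the Roe algebra; otherwise it is in any case ghost). We may therefore assume $T=\SOT$-$\sum_n T_n$ is block-diagonal.

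\textbf{Truncation on large-girth graphs.} Fix $R>0$. For $n$ large, $\girth(X_n)>4R$, so every ball of radius $2R$ in $X_n$ is a tree. For each $x$, let $T^{(R)}_{x,y}=T_{x,y}$ when $d(x,y)\le R$ and $0$ otherwise; this has propagation at most $R$. The key claim is that $T-T^{(R)}$ is ``ghost up to $\varepsilon(R)$'', where $\varepsilon(R)\to 0$ is the quasi-locality modulus. Concretely, for every $x$ we need
\[
\|(T-T^{(R)})\delta_x\otimes v\|\to 0
\]
uniformly as $x\to\infty$, up to an error of $\varepsilon(R)$. The natural estimate is $\|(T-T^{(R)})\delta_x\|\le\|\chi_{X\setminus B(x,R)}T\chi_{\{x\}}\|<\varepsilon(R)$. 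The difficulty is that the ghost condition requires smallness of $\chi_A$-compressions at infinity, not merely a uniform $\varepsilon$. What we actually obtain is that the image of $T$ in the quotient lies within $\varepsilon(R)$ of the image of $C^*/G$, provided $T^{(R)}$ is bounded, which is not automatic.

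\textbf{Main obstacle: boundedness of the truncation.} This is where girth enters. In a tree-like region, the cut-off $\chi_{d\le R}$ should be replaced by a smooth Schur multiplier $\phi_R(d(x,y))$ of norm at most $1$. Large girth makes the $2R$-balls locally isometric to the tree, and on trees the Haagerup/free-group machinery gives positive-definite kernels $e^{-t d(x,y)}$ whose Schur multiplier norm is $1$. One can also use a radial multiplier with norm bounded independently of $R$, obtained by truncating $e^{-td}$. Large girth should let us transplant these kernels to $X_n$ up to propagation $\sim\girth/2$. Set $S_R=m_{\phi}(T)$. Then $S_R$ has finite propagation and is bounded.

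\textbf{Estimate on the remainder.} We show $T-m_\phi(T)$ is within $\varepsilon$ of a ghost. Given quasi-locality, the entries of $T$ at distance $\ge r$ are negligible. Choosing $t$ small makes $1-e^{-td}$ small on distances up to $r$, which leaves small multiplier-norm contributions. If the image of $T$ in the quotient is arbitrarily close to the image of $C^*/G$, then since that image is closed (injective $*$-homomorphisms have closed range), $T$ lies in it, and surjectivity follows. The step I expect to be hardest is controlling $\|(1-m_\phi)(T)\|$ modulo ghosts using only quasi-locality. The plan there is to decompose $T$ into annular pieces $\chi_{d\in[k,k+1)}$ and use the tree structure, where each annulus has bounded-norm Schur multiplier in the tree case, to sum geometric tails.
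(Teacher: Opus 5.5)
Your injectivity argument and your closing step are right: a $*$-homomorphism with dense image is surjective because its range is closed. You also correctly identify the tree multipliers $s^{d}$ and their truncations, controlled via Haagerup--Steenstrup--Szwarc, as an ingredient. The reduction to block-diagonal $T$ is harmless but unnecessary, since compact operators always lie in $C^*(X;\H_0)$. Boundedness of a propagation-$N$ cut-off is automatic under bounded geometry: the cut-off to $\{d\le N\}$ is a sum of finitely many contractive partial-translation cut-offs. The only issue is that this bound grows with $N$.

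\textbf{First gap: computing norms modulo ghosts.} You give no mechanism for this, and ``transplanting'' tree kernels to $X_n$ does not supply one. A Schur multiplier acts on all of $\B(\ell^2(X_n))$, not only on pieces of propagation below $\girth(X_n)/2$. Large-girth graphs in general fail Property A (equivalently, operator norm localisation), so nothing makes norms on $X_n$ governed by the tree-like balls. Moreover, $T$ itself carries non-negligible mass at distances comparable to the girth. The paper's device is the limit operator $\Phi_\omega$, built with ultrapower coefficients $\K_\omega$ so that no richness assumption is needed for general $\H_0$:
\begin{itemize}
\item each $\Phi_\omega$ is a $*$-homomorphism on $C^*_q(X;\H_0)$ and $\bigcap_\omega\ker\Phi_\omega=G_q(X;\H_0)$, so $\|T+G_q(X;\H_0)\|=\sup_\omega\|\Phi_\omega(T)\|$;
\item large girth makes every limit space $X(\omega)$ an honest tree of valency at most $D$;
\item since the metric is integer-valued, the truncation $T_{s,N}$ (entries $\phi_{s,N}(d(x,y))T_{x,y}$, defined on $X$ itself) satisfies $\Phi_\omega(T_{s,N})=M^{X(\omega),\K_\omega}_{s,N}(\Phi_\omega(T))$.
\end{itemize}
This moves all the tree analysis onto the limit spaces and never needs uniform bounds on $X_n$.

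\textbf{Second gap: the remainder estimate.} It cannot work as sketched. Quasi-locality does not make $T-T^{(r)}$ small in norm; that failure is exactly the difference between $C^*_{uq}(X)$ and $C^*_u(X)$ on expanders. ``Entries at distance $\ge r$ are negligible'' holds only entrywise, or for $\chi_AT\chi_B$ with $d(A,B)>r$, and neither bounds $\|(1-m_\phi)(T)\|$. Summing annular pieces does not rescue this. Sphere multipliers $\chi_{\{k\}}(d)$ on a tree are not uniformly bounded in $k$ (their Schur norms grow linearly), and quasi-locality gives no summable norm bounds on the annular pieces of $T$. In fact, on a tree, $M_s(A)\to A$ as $s\to1^-$ is equivalent (given HSS) to $A$ being a norm limit of finite-propagation operators, so it cannot come from multiplier estimates alone.

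The missing input is that trees of bounded valency have uniform Property A. Hence the family version of \v{S}pakula--Zhang applies to $\{\Phi_\omega(T)\}_\omega$, which is uniformly quasi-local because $(\varepsilon,R)$-quasi-locality of $T$ gives $(2\varepsilon,R)$-quasi-locality of every $\Phi_\omega(T)$. So each $\Phi_\omega(T)$ is within $\varepsilon$ of some $B_\omega$ with $\ppg(B_\omega)\le R$, where $R$ does not depend on $\omega$. The Schur test then gives $\|B_\omega-M_s(B_\omega)\|\le(D+1)^R(1-s^R)\|B_\omega\|$. Combining this with $\sup_Y\|M^Y_s-M^Y_{s,N}\|\to0$ over all trees of bounded valency yields $\|T_{s,N}-T+G_q(X;\H_0)\|\le\varepsilon$, which is the density statement you need.
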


To prove Theorem~\ref{thm:quotient isom intro}, we need a new construction of limit operators for quasi-local operators. Recall that in \cite{SW17}, the limit operator theory for \emph{rich} operators in the Roe algebra was developed, and later  extended to the uniform quasi-local algebra in \cite{GQW24}. Based on these works, we introduce a new limit operator construction for quasi-local operators in $C^*_q(X;\H_0)$ to compute the norm of elements in the quotient algebra $C^*_{q}(X;\H_0)/G_{q}(X;\H_0)$. Then using the Schur-multiplier estimates from \cite{HSS10}, we establish a uniform truncation argument to approximate quasi-local operators using finite propagation ones in the quotient norm. 

When $\H_0= \CC$, we provide an alternative proof for Theorem~\ref{thm:quotient isom intro} using the notion of groupoid quasi-local algebras introduced in \cite{JZZ23}. This provides the first example of a non-amenable groupoid $\G$ whose reduced groupoid $C^*$-algebra coincides with the equivariant quasi-local algebra. See Appendix~\ref{app:groupoid} for details.

The second ingredient is a quantitative distinction inside the ghost ideals, detected by rank profiles. The restriction of operators in $C^*_{uq}(X)$ to their diagonal part provides a map from $C^*_{uq}(X)$ to the direct product of matrix algebras $\M_{|X_n|}(\CC)$, which induces the following map on $K_0$-groups
\[
\rank: K_0(C^*_{uq}(X)) \longrightarrow K_0\left( \prod_n \M_{|X_n|}(\CC) \big/ \bigoplus_n \M_{|X_n|}(\CC) \right) \hookrightarrow \prod_n \ZZ \big/ \bigoplus_n \ZZ.
\]
We show the following (see Proposition~\ref{prop:Roe ghost} and Corollary~\ref{cor:ghost in ql} for details). Here we denote $G_u(X)=G(X;\CC)$ the ideal of ghost operators in $C^*_u(X)$.

\begin{introthm}\label{thm:rank profile}
Let $\{X_n\}$ be a sequence of expander graphs such that there is $\gamma>0$ satisfying $\girth(X_n)\geq\gamma\log |X_n|$ for sufficiently large $n\in \NN$, and $X$ be their coarse disjoint union.
\begin{enumerate}
 \item For every $z\in K_0(G_u(X))$, there are $C<\infty$ and $\alpha>0$ such that $\rank((\iota\circ i_u)_\ast(z))$
has a representative $(s_n)_n \in \prod_n \ZZ$ satisfying $|s_n|\leq C |X_n|^{\,1-\alpha}$ for each $n$. Here $i_u: G_u(X) \to C^*_u(X)$ and $\iota: C^*_u(X) \to C^*_{uq}(X)$ are the inclusion maps.
 \item There is a ghost projection $P \in C^*_{uq}(X)$ with $\rank(P_n) = \left\lfloor\frac{|X_n|}{\log |X_n|}\right\rfloor$ for large $n$.
\end{enumerate}
\end{introthm}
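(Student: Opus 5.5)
The plan is to treat the two parts separately: part (1) by reducing $K$-theory to honest projections and estimating their traces block by block through a polynomial approximation whose degree grows like $\log|X_n|$, and part (2) by realising the required rank inside a matrix-algebra embedding into the ghost part of $C^*_{uq}(X)$.

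\textbf{Part (1): reduction to projections and the trace of a polynomial.}
Every $z\in K_0(G_u(X))$ is a difference $[p]-[e]$, where $p$ is a projection in $\M_k(G_u(X)^+)$ and $e$ is its scalar part. Since $\rank$ is additive, it suffices to control
\[
s_n:=\Tr(p_n)-\Tr(e_n),
\]
where $p_n$ is the $X_n$-block of $p$. I will use $p-e\in \M_k(G_u(X))$ throughout; since only finitely many blocks are affected, I may change finitely many $s_n$ freely.

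Fix $\varepsilon<1/4$ and a finite propagation self-adjoint $q$ with $\|p-q\|<\varepsilon$, of propagation $R$. Then $\sigma(q)\subseteq[-\varepsilon,\varepsilon]\cup[1-\varepsilon,1+\varepsilon]$, and $p=f(q)$, where $f$ equals $0$ near the first interval and $1$ near the second. Since $f$ is holomorphic on a neighbourhood of $\sigma(q)$, there are polynomials $h_m$ of degree $m$ with
\[
\sup_{\sigma(q)}|f-h_m|\le C_0\theta^m\quad\text{for some } \theta<1.
\]
I choose $m=m_n:=\lfloor \gamma\log|X_n|/(3R)\rfloor$. Then $h_{m_n}(q)$ has propagation at most $\girth(X_n)/3$ on $X_n$, and
\[
|\Tr(p_n)-\Tr(h_{m_n}(q)_n)|\le kC_0\theta^{m_n}|X_n|=O\bigl(|X_n|^{1-\alpha}\bigr)
\]
for some $\alpha>0$ depending on $\gamma,R,\theta$. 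The same bound applies to $e$.

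\textbf{Part (1): lifting to the tree and using ghostness.}
It remains to bound $\Tr\bigl(h_{m_n}(q)_n-h_{m_n}(e)_n\bigr)$. Each diagonal entry $h_{m_n}(q)_{xx}$ depends only on the restriction of $q$ to the ball of radius $m_nR<\girth(X_n)/2$ around $x$, which is a tree. I therefore lift $q$ to the universal covering tree $T_x$ at $x$, obtaining an operator $\tilde q_x$. This gives
\[
h_{m_n}(q)_{xx}=h_{m_n}(\tilde q_x)_{xx}
\quad\text{and}\quad
\bigl|h_{m_n}(\tilde q_x)_{xx}-f(\tilde q_x)_{xx}\bigr|\le kC_0\theta^{m_n}.
\]
The key claim is that $f(\tilde q_x)_{xx}-e_{xx}$ is uniformly small, tending to $0$ as $x\to\infty$, at a rate controlled by $\varepsilon$ and by the ghost decay of $p-e$. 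To prove it I will use the limit-operator description from the paper: the limit operators of a ghost vanish, so the local lifts of $p-e$ converge to $0$. Because $\varepsilon$ is at our disposal and only finitely many blocks are exceptional, this yields a representative with $|s_n|\le C|X_n|^{1-\alpha}$.

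\textbf{Main obstacle in part (1).}
I expect the hardest step to be showing that the lifted quantity $f(\tilde q_x)_{xx}-e_{xx}$ is not merely $o(1)$ but can be absorbed into the power saving. I would first try to use exponential decay of the entries of $p$ coming from the holomorphic calculus, which would give an exponential-in-$\log$ rate, combined with a choice of $q$ as an honest local truncation of $p$. If that fails, I would weaken the target and let $\alpha$ depend on $z$ through $R$; the statement does allow that dependence.

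\textbf{Part (2): the ghost projection of prescribed rank.}
Here I would follow the mechanism behind Theorem~\ref{prop:infiniteprojection intro}, namely Ozawa's embedding of products of matrix algebras into the ghost part of $C^*_{uq}(X)$, tracking dimensions carefully. The expansion gives
\[
d(A,B)>R\ \Longrightarrow\ \min(|A|,|B|)\le|X_n|e^{-cR}.
\]
Large girth lets one partition $X_n$ into roughly $|X_n|/\log|X_n|$ pieces of size about $\log|X_n|$. I would build a block-diagonal $*$-homomorphism $\prod_n\M_{k_n}(\CC)\to G_{uq}(X)$ with $k_n\ge\lfloor|X_n|/\log|X_n|\rfloor$, as in Ozawa's construction, and then take the image of a diagonal projection of rank exactly $\lfloor|X_n|/\log|X_n|\rfloor$ in each factor. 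The rank of the image is computed as the rank in $\M_{k_n}$ times the (fixed) multiplicity of the embedding; if the multiplicity is not $1$, I adjust $k_n$ to compensate.

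\textbf{Main obstacle in part (2).}
The delicate point is verifying that the embedding exists with $k_n$ this large while remaining uniformly quasi-local and ghost. I would check quasi-locality via the estimate above: $\|\chi_A P\|$ is small whenever $|A|\le|X_n|e^{-cR}$, because the image vectors are uniformly spread, with $\ell^\infty$-norm about $\sqrt{\log|X_n|/|X_n|}$ per basis vector.
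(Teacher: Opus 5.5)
Both parts have genuine gaps, and in each case they sit at the step you flag as the main obstacle.

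\textbf{Part (1).} The tools you name cannot close the gap.

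Ghostness of $p-e$ (equivalently, vanishing of all its limit operators) gives decay of the diagonal entries with no rate. It only reproduces the trivial bound $s_n=\sum_{x\in X_n}\Tr\big((p_n-e_n)_{x,x}\big)=o(d_n)$. A vertex-by-vertex comparison of $f(\tilde q_x)_{xx}$ with $e_{xx}$ at fixed precision $\varepsilon$ only gives $|s_n|\le Cd_n^{1-\alpha(\varepsilon)}+C\varepsilon d_n$. You cannot let $\varepsilon$ shrink with $n$, because $R=R(\varepsilon)$, and hence $\alpha$, is uncontrolled. The competing profile $\lfloor d_n/\log d_n\rfloor$ is itself $o(d_n)$, so this bound is worthless. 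Letting $\alpha$ depend on $R$ changes nothing.

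The missing ingredient is a \emph{global norm} statement. The Willett--Yu lifting homomorphism $\phi\colon C^*_u(X)\to\prod_n C^*_u(\widetilde X_n)^{\Gamma_n}/\bigoplus_n C^*_u(\widetilde X_n)^{\Gamma_n}$ annihilates ghosts. First reduce $z$ to $[p]-[p']$ with $p,p'$ block-diagonal projections with equal scalar part. You need such a reduction anyway: the trace of a non-projection block $\chi_{X_n}p\chi_{X_n}$ does not compute the rank class. Then a propagation-$R$ approximant $a$ of the ghost $b=p-p'$ with $\|a-b\|\le\frac18$ has lifts with $\|\widetilde{a_n}\|\le\frac14$ for large $n$.

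The paper then counts eigenvalues. Since $b_n$ is isometric on a subspace of dimension at least $|r_n|$, one gets $|r_n|(7/8)^{2m}\le\Tr(a_n^{2m})=\sum_{x\in X_n}\Tr\big((\widetilde{a_n}^{2m})_{\tilde x,\tilde x}\big)\le kd_n4^{-2m}$. Here $m=\lfloor\girth(X_n)/(2R)\rfloor-1\ge\gamma\log d_n/(2R)-2$, and this is the power saving.

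Your polynomial route could be rescued by combining that same norm fact with an idea absent from your proposal. The operator $f(\widetilde{q_n})$ and the lift of $e$ are norm-close projections in the finite von Neumann algebra of $\Gamma_n$-equivariant operators. Hence they have equal $\Gamma_n$-trace $T\mapsto\sum_{x\in X_n}\Tr(T_{\tilde x,\tilde x})$, so your troublesome sum vanishes exactly for large $n$. A minor point: $p\neq f(q)$ in general; only the blockwise ranks agree.

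\textbf{Part (2).} Here too the substance is assumed rather than proved.

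What is needed is a dimension-controlled version of Ozawa's Lemma 4. That lemma produces $V(n)\subseteq\RR^{d(n)}$ with no control of $d(n)$ against $n$. Your plan ``as in Ozawa's construction, with $k_n\ge\lfloor|X_n|/\log|X_n|\rfloor$'' presupposes exactly this control.

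Your proposed verification also fails. Per-vector $\ell^\infty$ bounds on a basis of $V_n$ give only $\|\chi_AP\|^2=\|\chi_AP\chi_A\|\le\sum_{x\in A}P_{xx}$. Even in the best case $P_{xx}=r/d$, this is $|A|r/d$, which is small only when $|A|\ll\log d_n$. Quasi-locality needs control for all $A$ with $|A|\le d_ne^{-cR}$, a fixed proportion of $X_n$. Indeed, an orthonormal basis of delocalised vectors can span a subspace containing $\chi_A/\sqrt{|A|}$.

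The paper proves instead that a Haar-random $r$-dimensional subspace satisfies $\|P\chi_E\|\le C\sqrt{r/d+(|E|/d)\log(ed/|E|)}$ simultaneously for all non-empty $E$. The proof uses L\'evy concentration for $x\mapsto\|\chi_Ex\|$, a $\frac12$-net of the unit sphere of $\CC^r$, and a union bound over all subsets of each size. With $r_n/d_n\to0$ this yields quasi-locality, since expansion forces one of $A,B$ to be small. It also yields ghostness, by taking $E$ a singleton.

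The partition into pieces of size $\log|X_n|$ plays no role. The multiplicity bookkeeping is also unnecessary, since the projection onto $V_n$ has rank exactly $r_n$.
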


The proof of (1) relies on the lifting technique from \cite{WY12} (originally by Higson). As for (2), we prove a strengthened version of Ozawa's embedding of the product of matrix algebras into the uniform quasi-local algebra with an extra control on the dimension (see Proposition~\ref{prop:ql subspace} and Corollary~\ref{cor:ghost in ql}).

Combining Theorem~\ref{thm:quotient isom intro} with Theorem~\ref{thm:rank profile} and using a diagram chase, we finish the proof of Theorem~\ref{thm:main result K-theory}.

Finally, Theorem~\ref{thm:quotient isom intro} also yields a sufficient condition on injectivity:

\begin{introthm}\label{thm:inj intro}
Let $\{X_n\}$ be a sequence of graphs with bounded valency and large girth, $X$ be their coarse disjoint union, and $\H_0$ a separable Hilbert space. If $X$ can be coarsely embedded into some Hilbert space, then the map $\iota_\ast: K_0(C^*(X;\H_0)) \longrightarrow K_0(C^*_{q}(X;\H_0))$, induced by the inclusion $\iota: C^*(X;\H_0) \hookrightarrow C_q^*(X;\H_0)$, is injective.
\end{introthm}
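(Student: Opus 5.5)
The plan is to use Theorem~\ref{thm:quotient isom intro} together with the six-term exact sequences of the two ghost extensions, and to reduce injectivity on the quotient level to the coarse Baum--Connes conjecture. Set $G=G(X;\H_0)$, $G_q=G_q(X;\H_0)$, $C=C^*(X;\H_0)$, $C_q=C^*_q(X;\H_0)$. The inclusion $\iota$ maps $G$ into $G_q$, so we get a morphism of short exact sequences
\[
0\to G\to C\to C/G\to 0 \quad\longrightarrow\quad 0\to G_q\to C_q\to C_q/G_q\to 0 ,
\]
and by Theorem~\ref{thm:quotient isom intro} the right-hand vertical map is a $*$-isomorphism. Naturality of the six-term sequences then gives a commutative ladder connecting $K_\ast(G)\to K_\ast(C)\to K_\ast(C/G)$ with $K_\ast(G_q)\to K_\ast(C_q)\to K_\ast(C_q/G_q)$.

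Now take $x\in K_0(C)$ with $\iota_\ast(x)=0$. Its image in $K_0(C/G)$ maps to $0$ in $K_0(C_q/G_q)$. Since the quotient map is an isomorphism, $x$ already maps to $0$ in $K_0(C/G)$, so $x=j_\ast(g)$ for some $g\in K_0(G)$. The claim therefore reduces to showing that $K_0(G)\to K_0(C)$ is the zero map, or more weakly that every class of $K_0(C)$ coming from ghosts vanishes. This is the point where coarse embeddability enters. The space $X$ is a coarse disjoint union of finite graphs that embeds coarsely into Hilbert space. By Yu's theorem, and its version with coefficients $\H_0$ separable, the coarse Baum--Connes assembly map for $X$ is an isomorphism; in particular it is surjective.

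For sparse spaces the assembly map factors through $K_0(C)\to \prod_n K_0(\B(\ell^2(X_n;\H_0))\otimes\text{stuff})/\bigoplus$ in the standard way of Higson--Lafforgue--Skandalis and Willett--Yu (\cite{WY12}). So the key step is to show that for coarsely embeddable sparse $X$, the composition $K_0(G)\to K_0(C)$ is zero. The route I would take is through the ``higher index of ghosts vanishes'' principle. Finite-dimensional blockwise trace/rank maps detect $K_0(C)$ modulo the part coming from $\bigoplus_n K_0(C^*(X_n))$: surjectivity and injectivity of the assembly map (injectivity via the maximal coarse Baum--Connes conjecture of \cite{WY12} under coarse embeddability) identify $K_0(C)$ with $\lim_d \prod_n K_0(P_d(X_n))/\bigoplus_n$. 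A ghost class then has vanishing image in each large-$n$ block, because ghosts are asymptotically small in matrix entries, and the relevant localization reads entries at bounded scale. Hence it is zero.

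The main obstacle is this last step: rigorously showing that ghost classes die in $K_0(C)$. I would attempt it by a Higson-type lifting as in \cite{WY12}. A ghost projection is approximated by finite-propagation elements whose local pieces are uniformly small, so their image under the localization/Rips-complex description is trivial. Injectivity of the assembly map from coarse embeddability then forces $j_\ast(g)=0$. If this direct argument stalls, I would fall back on the exactness of $0\to G\to C\to C/G\to0$ at the level of the known identification $K_\ast(C/G)$ with the boundary $K$-theory, which is valid under coarse embeddability.
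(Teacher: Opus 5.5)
There is a genuine gap: the statement you reduce to is false. You propose to show that $j_\ast\colon K_0(G)\to K_0(C)$ is zero, i.e.\ that classes coming from ghosts die in $K_0(C)$. Compact operators are ghosts, however. A rank-one projection $e$ onto $\CC(\delta_{x_0}\otimes\xi)$ lies in $\KK(\ell^2(X;\H_0))\subseteq G$, and $[e]\neq 0$ already in $K_0(C_q)$. To see this, set $D:=C_q\cap\prod_n\B(\ell^2(X_n;\H_0))$. Then $C_q=D+\KK(\ell^2(X;\H_0))$ and $D\cap\KK(\ell^2(X;\H_0))=\bigoplus_n\KK(\ell^2(X_n;\H_0))$. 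The coordinate evaluations show that $K_0$ of this direct sum injects into $K_0(D)$. Hence the boundary map $K_1(C_q/\KK(\ell^2(X;\H_0)))\to K_0(\KK(\ell^2(X;\H_0)))$ vanishes, and $K_0(\KK(\ell^2(X;\H_0)))\cong\ZZ$ injects into $K_0(C_q)$, hence into $K_0(C)$. Under coarse embeddability one even has $K_0(\KK(\ell^2(X;\H_0)))\cong K_0(G)$, so $j_\ast$ is injective, not zero. Your heuristic (ghost entries are asymptotically small, so the Willett--Yu lifting kills them) only controls images in $\prod_n/\bigoplus_n$. This is also where your description of $K_0(C)$ as $\lim_d\prod_n K_0(P_d(X_n))/\bigoplus_n$ goes wrong. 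The coarse $K$-homology of a coarse disjoint union is built from products of the groups $K_0(P_d(X_n))$ with nothing divided out, so classes supported on finitely many blocks, such as $[e]$, survive.

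The missing idea is the second half of the diagram chase. Let $\iota_G\colon G\to G_q$ be the restricted inclusion. From $x=j_\ast(g)$ you also know that $\iota_{G\ast}(g)$ lies in the kernel of $K_0(G_q)\to K_0(C_q)$, hence equals $\partial_q(z)$ for some $z\in K_1(C_q/G_q)$. The isomorphism of Theorem~\ref{thm:quotient isom intro} also identifies $K_1$ of the quotients compatibly with the boundary maps. So $z$ comes from some $z'\in K_1(C/G)$, and $g':=g-\partial(z')$ satisfies $j_\ast(g')=x$ and $\iota_{G\ast}(g')=0$. Hence $\ker(\iota_\ast)=j_\ast(\ker(\iota_{G\ast}))$, and the theorem reduces to \emph{injectivity} of $\iota_{G\ast}\colon K_0(G)\to K_0(G_q)$, the opposite of what you aimed for. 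The paper obtains this from two facts. First, coarse embeddability gives $K_0(\KK(\ell^2(X;\H_0)))\cong K_0(G)$ by \cite{WZ25,WFZ25}. Second, $K_0(\KK(\ell^2(X;\H_0)))\to K_0(C_q)$ is injective, as shown above, and this map factors through $K_0(G)\to K_0(G_q)\to K_0(C_q)$. So coarse embeddability enters through the $K$-theory of the ghost ideal, not through Yu's coarse Baum--Connes isomorphism for $C$, which on its own gives no control over the passage to $C_q$.
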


Such graphs do exist, \emph{e.g.}, the box spaces of the free group constructed in \cite{AGS12}. The condition of large girth is proved in \cite[Lemma 4.4]{AGS12}. However, the surjectivity of the map $\iota_\ast$ in Theorem~\ref{thm:inj intro} is still unclear to us. On the other hand, we currently do not know any example where the injectivity of $\iota_\ast$ fails.

\subsection*{Organisation} The paper is organised as follows: In Section~\ref{sec:preliminaries}, we collect background knowledge. Section~\ref{sec:decomposition} is devoted to the proof of Theorem~\ref{thm:main result projection}, where we prove Theorem~\ref{introthm:decomposition} in Section~\ref{ssec:proof of decomposition}. In Section~\ref{sec:unbounded case}, we prove Theorem~\ref{prop:infiniteprojection intro}. Section~\ref{sec:quotient isom} and Section~\ref{sec:rank profiles} are devoted to the proof of Theorem~\ref{thm:main result K-theory}, where we prove Theorem~\ref{thm:quotient isom intro} in Section~\ref{sec:quotient isom} and Theorem~\ref{thm:rank profile} in Section~\ref{sec:rank profiles}. Finally, we prove Theorem~\ref{thm:inj intro} in Section~\ref{sec:injectivity}.

\subsection*{Acknowledgements}
We would like to thank Jianchao Wu for helpful discussions on Lemma~\ref{lem:norm of APB} and Theorem~\ref{prop:infiniteprojection intro}.

\subsection*{AI use statement}
We gratefully acknowledge ChatGPT 5.6-Sol for suggesting ideas related to Theorem~\ref{introthm:decomposition} and Theorem~\ref{thm:main result K-theory}. Theorem~\ref{introthm:decomposition} grew out of an approximation decomposition from an earlier version of the paper. All mathematical arguments were carefully checked and substantially revised by the authors, who bear full responsibility for the final version.

\section{Preliminaries}\label{sec:preliminaries}

For a discrete metric space $(X,d)$, $x\in X$ and $R \geq 0$, denote
\[
B(x,R):=\{y\in X:d(x,y) \leq R\}.
\]
We say that $(X,d)$ has \emph{bounded geometry} if $\sup_{x\in X} |B(x,R)|$ is finite for each $R\geq 0$. For $A \subseteq X$ and $R \geq 0$, denote the \emph{$R$-neighbourhood of $A$} by
\[
\N_R(A):=\{x\in X: d(x,A) \leq R\},
\]
and the \emph{$R$-boundary of $A$} by $\partial_R(A):=\N_R(A) \setminus A$.

A subset $K \subseteq X \times X$ is called a \emph{partial translation} if there exists $S>0$ such that $K \subseteq \{(x,y) \in X \times X: d(x,y) \leq S\}$ and for each $x\in X$, there is at most one $y\in X$ with $(x,y) \in K$ and at most one $z\in X$ with $(z,x) \in K$. In this case, there is a bijection $t: D \to R$ with $D,R \subseteq X$ such that $K$ coincides with the graph of $t$.

Given a Hilbert space $\H_0$, denote the Hilbert space consisting of all square summable functions from $X$ to $\H_0$ by $\ell^2(X;\H_0) \cong \ell^2(X) \otimes \H_0$. Denote the set of all bounded linear operators on this Hilbert space by $\B(\ell^2(X;\H_0))$. For $T \in \B(\ell^2(X;\H_0))$ and $x,y \in X$, define $T_{x,y} \in \B(\H_0)$ by
\[
\langle T_{x,y}\xi,\eta \rangle = \langle T(\delta_y \otimes \xi), \delta_x \otimes \eta \rangle \quad \text{for} \quad \xi, \eta \in \H_0.
\]
Here $\delta_x$ denotes the Dirac function at $x$ in $\ell^2(X)$.

For a closed subspace $U\subseteq \ell^2(X;\H_0)$, denote by $P_U \in \B(\ell^2(X;\H_0))$ the orthogonal projection onto $U$. Denote $u_A:=\chi_A u$ for $u\in \ell^2(X;\H_0)$ and $A \subseteq X$, and $P_u:=P_{\CC u}$ if $\|u\|=1$. Here $\chi_A$ denotes the characteristic function of $A$.

A sequence of finite metric spaces $\{(X_n,d_n)\}_{n\in \NN}$ is said to have \emph{uniformly bounded geometry} if $\sup_n \sup_{x\in X_n} |B(x,R)|$ is finite for each $R\geq 0$. Recall that their \emph{coarse disjoint union} is a metric space $(X,d)$ where $X = \bigsqcup_{n\in \NN} X_n$ as a set, the restriction of $d$ to each $X_n$ is $d_n$ and $d$ satisfies $d(X_n, X_m) \to \infty$ as $n+m \to \infty$ and $n \neq m$.
Clearly, $\{(X_n,d_n)\}_{n\in \NN}$ has uniformly bounded geometry \emph{if and only if} their coarse disjoint union has bounded geometry.
For such $X$ and a Hilbert space $\H_0$, an operator $T\in \B(\ell^2(X;\H_0))$ is called \emph{block-diagonal} if $T=\SOT$-$\sum_{n\in \NN} T_n$, where each $T_n \in \B(\ell^2(X_n;\H_0))$. Such $T$ is said to have \emph{block-rank at most $M$} if each $T_n$ has rank at most $M$, and \emph{block-rank-$M$} if each $T_n$ has rank exactly $M$.

\begin{defn}\label{defn:ppg and quasi-local}
Let $(X,d)$ be a discrete metric space, $\H_0$ be a Hilbert space and $T \in \B(\ell^2(X;\H_0))$.
\begin{enumerate}
  \item The \emph{propagation} of $T$ is defined to be $\ppg(T):=\sup\{d(x,y): T_{x,y} \neq 0\}$.
  We say that $T$ has \emph{finite propagation} if $\ppg(T) < \infty$.
  \item Given $\varepsilon, R>0$, say that $T$ is \emph{$(\varepsilon,R)$-quasi-local} if for any $A,B \subseteq X$ with $d(A,B) > R$, we have $\|\chi_A T \chi_B\| < \varepsilon$. We say that $T$ is \emph{quasi-local} if for any $\varepsilon>0$, there exists $R>0$ such that $T$ is $(\varepsilon,R)$-quasi-local.
  \item We say that $T$ is \emph{locally compact} if each $T_{x,y}$ is a compact operator on $\H_0$.
\end{enumerate}
\end{defn}

\begin{defn}\label{defn:Roe and quasi-local algebra}
Let $(X,d)$ be a discrete metric space of bounded geometry and $\H_0$ a Hilbert space.
\begin{enumerate}
  \item The \emph{algebraic Roe algebra} of $X$ (with coefficient $\H_0$), denoted by $\CC[X;\H_0]$, is the set of all finite propagation and locally compact operators in $\B(\ell^2(X;\H_0))$. The \emph{Roe algebra} of $X$ (with coefficient $\H_0$), denoted by $C^*(X;\H_0)$, is the norm closure of $\CC[X;\H_0]$.
  \item The \emph{quasi-local algebra} of $X$ (with coefficient $\H_0$), denoted by $C^*_q(X;\H_0)$, is the set of all quasi-local and locally compact operators in $\B(\ell^2(X;\H_0))$.
\end{enumerate}
\end{defn}

When $\H_0 = \CC$, simply denote $\CC_u[X]:=\CC[X;\CC]$ and $C^*_u(X) = C^*(X;\CC)$, where the latter is the uniform Roe algebra of $X$. Also $C^*_{uq}(X)=C^*_q(X;\CC)$ is the uniform quasi-local algebra.
When $\H_0$ is infinite-dimensional and separable, $C^*(X;\H_0)$ is the usual \emph{Roe algebra of $X$}, simply denoted by $C^*(X)$, and $C^*_q(X;\H_0)$ is the usual \emph{quasi-local algebra of $X$}, simply denoted by $C^*_{q}(X)$.

Finally, we recall the notion of ghost operators:

\begin{defn}[G. Yu]\label{defn:ghost}
Let $(X,d)$ be a discrete metric space of bounded geometry and $\H_0$ a Hilbert space. Say that $T \in \B(\ell^2(X;\H_0))$ is a \emph{ghost} operator if for any $\varepsilon>0$, there exists a finite subset $F \subseteq X$ such that $\|T_{x,y}\| < \varepsilon$ for any $(x,y) \in (X \times X) \setminus (F \times F)$.
\end{defn}

Note that all ghost operators in the Roe algebra form a closed $\ast$-ideal, denoted by $G(X;\H_0)$. Similarly, all ghost operators in the quasi-local algebra form a norm-closed $\ast$-ideal, denoted by $G_q(X;\H_0)$. When $\H_0 = \CC$, we simply write $G_u(X)$ and $G_{uq}(X)$ for $G(X;\CC)$ and $G_q(X;\CC)$, respectively.

\section{Proof of Theorem~\ref{thm:main result projection}}\label{sec:decomposition}

In this section, we prove Theorem~\ref{thm:main result projection}, which is divided into several parts.

\subsection{Analysis on a single space}\label{ssec:single piece}

Throughout this subsection, let $(X,d)$ be a discrete metric space, $\H_0$ a Hilbert space and $u^{(1)}, \cdots, u^{(M)}$ orthonormal in $\ell^2(X;\H_0)$. Denote by $U$ the subspace in $\ell^2(X;\H_0)$ spanned by $\{u^{(1)}, \cdots, u^{(M)}\}$, and $P_U$ the orthogonal projection onto $U$. Recall that $u_A:=\chi_A u$ for $u\in \ell^2(X;\H_0)$ and $A \subseteq X$.

When $U$ is one-dimensional and spanned by a unit vector $u$, \cite[Lemma 4.3]{LSZ23} shows the following:
\[
\|\chi_A P_U \chi_B\| = \|u_A\| \cdot \|u_B\| = \sqrt{\m(A) \cdot \m(B)} \quad \text{for} \quad A, B \subseteq X.
\]
Here $\m$ is the probability measure on $X$ given by $\m(\{x\}):=\|u(x)\|^2$ for $x\in X$.

For general $M$, we consider matrix-valued measures on $X$ as follows. Set $W: \CC^M \longrightarrow \ell^2(X;\H_0)$ by $W(e_i):=u^{(i)}$ for $i=1,2,\cdots, M$. Here \(e_i\) denotes the \(i\)-th standard basis vector of \(\CC^M\). Clearly,
\[
W^*W = \Id_{\CC^M} \quad \text{and} \quad WW^* = P_U.
\]
For $A \subseteq X$, denote
\[
\m(A):=W^* \chi_A W \in \M_M(\CC),
\]
where $\M_M(\CC)$ denotes the $M$-by-$M$ complex matrix algebra. If we want to emphasise the subspace $U$, we also denote $\m(A)$ by $\m_U(A)$.

It is clear that $0 \leq \m(A) \leq \I_M$, where $\I_M$ is the $M$-by-$M$ identity matrix. We record the following:
\begin{equation}\label{EQ:u and m}
\|u_A\|^2 = \langle \m(A) \xi, \xi \rangle = \|\m(A)^{1/2} \xi\|^2  \quad \text{for} \quad u=W \xi,
\end{equation}
which is useful later. Moreover, we have:

\begin{lem}\label{lem:norm of APB}
 With the same notation as above, we have:
 \[
   \|\chi_A P_U \chi_B\| = \left\|\m(A)^{1/2} \m(B)^{1/2}\right\| = \r(\m(A)\cdot \m(B))^{1/2}
 \]
 for any $A,B \subseteq X$.
 Here $\|\cdot\|$ denotes the operator norm and $\r(\cdot)$ denotes the spectral radius of the matrix. Hence, we have $\|\m(A)\m(B)\| \leq \|\chi_A P_U \chi_B\| \leq \|\m(A)\m(B)\|^{1/2}$.
\end{lem}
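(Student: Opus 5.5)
Since $P_U = WW^*$ with $W^*W=\Id_{\CC^M}$, we can write $\chi_A P_U \chi_B = (\chi_A W)(\chi_B W)^*$. This factorisation drives the whole argument.

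For the first equality we use the $C^*$-identity $\|T\|^2=\|T^*T\|$. The relevant products are
\[
(\chi_A W)^*(\chi_A W) = W^*\chi_A W = \m(A), \qquad (\chi_B W)^*(\chi_B W)=\m(B),
\]
because $\chi_A$ is a projection. By polar decomposition, $\chi_A W = V_A \m(A)^{1/2}$ for a partial isometry $V_A\colon \CC^M\to \ell^2(X;\H_0)$ that is isometric on $\ran \m(A)^{1/2}$, and similarly $\chi_B W = V_B\m(B)^{1/2}$. Hence
\[
\chi_A P_U\chi_B = V_A\, \m(A)^{1/2}\m(B)^{1/2}\, V_B^*.
\]
Here $V_B^*$ maps onto $\overline{\ran}\,\m(B)^{1/2}$ and is a co-isometry onto it, while $V_A$ is isometric on $\ran \m(A)^{1/2}$. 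Therefore the norm equals $\|\m(A)^{1/2}\m(B)^{1/2}\|$. Alternatively, one can avoid polar decomposition: compute $\|\chi_A P_U\chi_B\|^2 = \|(\chi_B W)(\chi_A W)^*(\chi_A W)(\chi_B W)^*\| = \|\chi_B W\,\m(A)\,W^*\chi_B\|$, which equals $\|\m(A)^{1/2}W^*\chi_B\|^2$, and then apply the same identity once more on the other side. Either way only routine $C^*$-manipulations are needed.

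For the second equality, set $S=\m(A)^{1/2}\m(B)^{1/2}$. Then $\|S\|^2=\|S^*S\|=\r(S^*S)=\r(\m(B)^{1/2}\m(A)\m(B)^{1/2})$, since $S^*S$ is positive and hence its norm equals its spectral radius. Because $\r(XY)=\r(YX)$ for matrices, taking $X=\m(B)^{1/2}$ and $Y=\m(A)\m(B)^{1/2}$ gives $\r(\m(A)\m(B))$.

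For the final inequalities, the upper bound follows from $\r(\m(A)\m(B))\le \|\m(A)\m(B)\|$. For the lower bound, note that $\m(A)\m(B) = \m(A)^{1/2}\,S\,\m(B)^{1/2}$ and $\|\m(A)^{1/2}\|,\|\m(B)^{1/2}\|\le 1$ since $0\le \m(\cdot)\le \I_M$. Thus $\|\m(A)\m(B)\|\le\|S\|=\|\chi_AP_U\chi_B\|$.

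The only step needing care is the first equality: we must check that the partial isometries, or the double application of the $C^*$-identity, do not lose norm. The cleanest route is the squared-norm computation above, applied twice. I expect no serious obstacle.
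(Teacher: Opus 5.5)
Your proof is correct and follows essentially the paper's route: the paper also reduces to $\|\chi_A P_U\chi_B\|^2=\|\chi_B W\m(A)W^*\chi_B\|$ and applies $\|CC^*\|=\|C^*C\|$ with $C=\chi_B W\m(A)^{1/2}$ to reach $\|\m(A)^{1/2}\m(B)\m(A)^{1/2}\|$, which is exactly your squared-norm computation applied twice. Your arguments for the spectral-radius identity (via $\r(XY)=\r(YX)$) and for the two final inequalities correctly supply the elementary facts that the paper leaves to the reader.
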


\begin{proof}
Letting $S:=\chi_A P_U \chi_B$, we have:
\[
 S^*S=\chi_BP_U\chi_AP_U\chi_B
     =\chi_BW\m(A)W^*\chi_B = CC^*,
\]
for $C:=\chi_BW\m(A)^{1/2}$. Hence
\begin{align*}
 \|S\|^2 &=\|S^*S\|=\|CC^*\|=\|C^*C\|=\left\|\m(A)^{1/2}\m(B)\m(A)^{1/2}\right\|\\
 &=\left\|\m(B)^{1/2}\m(A)^{1/2}\right\|^2 = \left\|\m(A)^{1/2}\m(B)^{1/2}\right\|^2,
\end{align*}
where the last equation holds since $\m(A)^{1/2}, \m(B)^{1/2}$ are self-adjoint.

The rest of the proof is clear from the following elementary fact: For $a,b \geq 0$ in a $C^*$-algebra $\A$, we have $\r(ab) = \|a^{1/2}b^{1/2}\|^2$. Moreover, if $0\leq a,b\leq 1$, we have $\|ab\| \leq \|a^{1/2}b^{1/2}\|$. The proof is standard and details are left to the reader.
\end{proof}

Now we show that bad quasi-local behaviour controls spectrum.

\begin{lem}\label{lem:forcing}
With the same notation as above, fix $\varepsilon \in (0,1)$ and set $\delta:=\varepsilon^2/16$. Assume that $P_U$ is $(\delta,R)$-quasi-local for some $R>0$.  If there exist a unit vector $u \in U$ and subsets $A, B \subseteq X$ with $d(A,B) > R$ such that $\|u_A\| \cdot \|u_B\|\geq \varepsilon$, then
\[
\lambda_{\min}(\m(A)) < \frac{\varepsilon^2}{4} \quad \text{and} \quad \lambda_{\max}(\m(A)) \geq \varepsilon^2.
\]
Here $\lambda_{\min}, \lambda_{\max}$ denote the minimal and maximal eigenvalue, respectively.
\end{lem}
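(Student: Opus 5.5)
The plan is to derive both eigenvalue bounds from identity \eqref{EQ:u and m} and from Lemma~\ref{lem:norm of APB}. The upper bound on $\lambda_{\min}$ will come from a contradiction with $(\delta,R)$-quasi-locality. Throughout, write $u=W\xi$ with $\xi\in\CC^M$. Since $W$ is an isometry and $u$ is a unit vector, $\xi$ is a unit vector as well.

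\emph{The bound on $\lambda_{\max}$.} Since $\|u_B\|\le \|u\|=1$, the hypothesis $\|u_A\|\cdot\|u_B\|\ge\varepsilon$ gives $\|u_A\|\ge\varepsilon$ (and symmetrically $\|u_B\|\ge\varepsilon$). By \eqref{EQ:u and m},
\[
\varepsilon^2\le\|u_A\|^2=\langle \m(A)\xi,\xi\rangle\le \lambda_{\max}(\m(A)).
\]
In the same way, $\|\m(B)\|=\lambda_{\max}(\m(B))\ge \|u_B\|^2\ge\varepsilon^2$.

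\emph{The bound on $\lambda_{\min}$.} Suppose instead that $\lambda:=\lambda_{\min}(\m(A))\ge \varepsilon^2/4$. Then $\m(A)\ge \lambda\,\I_M$. Conjugating by $\m(B)^{1/2}$ gives
\[
\m(B)^{1/2}\m(A)\m(B)^{1/2}\ge \lambda\,\m(B)\ge 0.
\]
Taking norms and using the identity $\|\chi_AP_U\chi_B\|^2=\|\m(B)^{1/2}\m(A)\m(B)^{1/2}\|$, which is established in the proof of Lemma~\ref{lem:norm of APB}, we obtain
\[
\|\chi_AP_U\chi_B\|^2\ge \lambda\,\|\m(B)\|\ge \frac{\varepsilon^2}{4}\cdot\varepsilon^2=\frac{\varepsilon^4}{4}.
\]
Hence $\|\chi_AP_U\chi_B\|\ge \varepsilon^2/2>\varepsilon^2/16=\delta$. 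This contradicts the $(\delta,R)$-quasi-locality of $P_U$, because $d(A,B)>R$. Therefore $\lambda_{\min}(\m(A))<\varepsilon^2/4$.

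\emph{The delicate step.} The only point needing care is the choice of which form of the norm identity in Lemma~\ref{lem:norm of APB} to use. The form $\m(B)^{1/2}\m(A)\m(B)^{1/2}$ is what allows the lower bound $\m(A)\ge\lambda\I_M$ to be pushed through the sandwich by positivity, leaving only $\|\m(B)\|$, which we have already bounded below by $\varepsilon^2$. Everything else is routine.
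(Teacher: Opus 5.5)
Your proof is correct and follows essentially the paper's route: both assume $\m(A)\geq\frac{\varepsilon^2}{4}\I_M$ and derive a contradiction with $(\delta,R)$-quasi-locality via the norm identity of Lemma~\ref{lem:norm of APB}. The only difference is cosmetic. The paper inverts $\m(A)^{1/2}$ to get $\|u_B\|\leq\frac{2}{\varepsilon}\|\m(B)^{1/2}\m(A)^{1/2}\|$, whereas you conjugate the operator inequality by $\m(B)^{1/2}$ to get $\|\chi_AP_U\chi_B\|^2\geq\lambda_{\min}(\m(A))\,\|\m(B)\|$, which is the same estimate read in the other direction.
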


\begin{proof}
Set $\xi:=W^*u$ for the isometry $W: \CC^M \longrightarrow \ell^2(X;\H_0)$ above. Then $\|\xi\| =1$ and according to \eqref{EQ:u and m}, we have $\langle \m(A) \xi, \xi\rangle = \|u_A\|^2 \geq \varepsilon^2$. So $\lambda_{\max}(\m(A))\geq \varepsilon^2$.

Now assume that $\m(A)\geq \frac{\varepsilon^2}{4} \cdot \Id_M$. Then $\m(A)$ is invertible and $\|\m(A)^{-1/2}\|\leq \frac{2}{\varepsilon}$. Using \eqref{EQ:u and m} and Lemma~\ref{lem:norm of APB}, we obtain:
\begin{align*}
\|u_B\| &=\|\m(B)^{1/2}\xi\|=\|\m(B)^{1/2}\m(A)^{1/2}\m(A)^{-1/2}\xi\|\\
&\le \|\m(B)^{1/2}\m(A)^{1/2}\| \cdot \|\m(A)^{-1/2}\|<
\frac{\varepsilon^2}{16}\cdot\frac{2}{\varepsilon}
=\frac{\varepsilon}{8}.
\end{align*}
This contradicts $\|u_B\| \geq \varepsilon$. Hence we conclude that $\lambda_{\min}(\m(A)) < \frac{\varepsilon^2}{4}$.
\end{proof}

\subsection{Decomposition}\label{ssec:proof of decomposition}

This subsection is devoted to the proof of Theorem~\ref{introthm:decomposition}.

For simplicity, we introduce the following:
For a metric space $X$, a projection $Q\in \B(\ell^2(X;\H_0))$ and parameters $\varepsilon \in (0,1)$, $R>0$, we say that $Q$ is \emph{$(\varepsilon,R)$-good} if for any unit vector $u\in\ran(Q)$ and any $A,B \subseteq X$ with $d(A,B) > R$, we have $\|u_A\| \cdot \|u_B\|<\varepsilon$. Otherwise, we say that $Q$ is \emph{$(\varepsilon,R)$-bad}.

Throughout this subsection, let $\{(X_n,d_n)\}_{n\in \NN}$ be a sequence of finite metric spaces of uniformly bounded geometry, and $X:=\bigsqcup_{n\in \NN} X_n$ be their coarse disjoint union. Let $M \in \NN$, and $\H_0$ be a Hilbert space.

We start with the following global reduction, which is crucial.

\begin{prop}\label{prop:global reduction}
Fix $\varepsilon\in (0,1)$ and set $\delta:=\varepsilon^2/16$. Let $Q=\SOT$-$\sum_{n\in \NN} Q_n$ be a block-diagonal projection in $C^*_q(X;\H_0)$ with $1\leq \rank(Q_n) \leq M$ for each $n$. Take $R>0$ so that $Q$ is $(\delta,R)$-quasi-local. Then for each $n$, there is a non-zero projection $Q'_n \leq Q_n$ such that
\[
\begin{cases}
Q_n'=Q_n, &\text{if }Q_n\text{ is }(\varepsilon,R)\text{-good};\\
\rank(Q_n')<\rank(Q_n),&\text{if }Q_n\text{ is }(\varepsilon,R)\text{-bad},
\end{cases}
\]
and the block-diagonal projection $Q':=\SOT$-$\sum_{n\in \NN} Q'_n \in C^*_q(X;\H_0)$.
\end{prop}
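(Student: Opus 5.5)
The plan is to treat good and bad blocks separately: on good blocks keep $Q'_n:=Q_n$, and on bad blocks cut $Q_n$ down by a spectral projection of the matrix $\m(A_n)$ for a suitably chosen set $A_n$. The key point is to make this cut by a \emph{uniform} functional calculus, so that the global operator $Q'$ can be written as a norm limit of polynomial expressions in $Q$ and characteristic functions. Such expressions are manifestly quasi-local and locally compact.

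\emph{Step 1: a spectral gap on each bad block.} Fix $n$ with $Q_n$ $(\varepsilon,R)$-bad, and let $U_n:=\ran(Q_n)$, $M_n:=\rank(Q_n)\le M$. Let $W_n:\CC^{M_n}\to\ell^2(X_n;\H_0)$ be the isometry built from an orthonormal basis of $U_n$. Badness gives a unit vector $u\in U_n$ and sets $A_n,B_n$ with $d(A_n,B_n)>R$ and $\|u_{A_n}\|\cdot\|u_{B_n}\|\ge\varepsilon$. We may assume $A_n,B_n\subseteq X_n$, since $u$ is supported in $X_n$. The restriction $Q_n=\chi_{X_n}Q\chi_{X_n}$ is $(\delta,R)$-quasi-local, so Lemma~\ref{lem:forcing} yields
\[
\lambda_{\min}(\m_{U_n}(A_n))<\varepsilon^2/4 \quad\text{and}\quad \lambda_{\max}(\m_{U_n}(A_n))\ge\varepsilon^2.
\]
Now split $[\varepsilon^2/4,\varepsilon^2]$ into $M+1$ consecutive closed subintervals $I_1,\dots,I_{M+1}$ of equal length. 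Since $\m_{U_n}(A_n)$ has at most $M$ eigenvalues, by pigeonhole some $I_j$ contains none of them. Choose such a $j=j(n)$, and let $Y_j$ be the union of the bad $X_n$ with $j(n)=j$. This partitions the bad blocks into at most $M+1$ classes, and the gap in each class has width $\eta:=\frac{3\varepsilon^2}{4(M+1)}$, independent of $n$.

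\emph{Step 2: the sub-projections.} For each $j$, fix a continuous $f_j:[0,1]\to[0,1]$ that equals $0$ left of $I_j$ and $1$ right of $I_j$. For $X_n\subseteq Y_j$, set
\[
Q'_n:=W_nf_j(\m_{U_n}(A_n))W_n^*.
\]
This is the projection onto the span of $W_n$ applied to the eigenvectors with eigenvalue above $I_j$. It is non-zero because $\lambda_{\max}\ge\varepsilon^2$. It is also $\le Q_n$, and it has rank $<M_n$ because the eigenvector for $\lambda_{\min}<\varepsilon^2/4$ is excluded.

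\emph{Step 3: quasi-locality (the main point).} The key identity is
\[
W_n(W_n^*\chi_{A_n}W_n)^kW_n^*=(Q_n\chi_{A_n})^kQ_n,
\]
which uses $W_n^*W_n=\Id$ and $W_nW_n^*=Q_n$. Given $\kappa>0$, Weierstrass gives a polynomial $p_j$ with $\sup_{[0,1]}|f_j-p_j|<\kappa$; this choice depends only on $j,\varepsilon,M$, not on $n$. Set $A:=\bigsqcup_nA_n$. Since $Q$, $\chi_A$ and $\chi_{Y_j}$ are all block-diagonal and commute blockwise, the operator
\[
\chi_{Y_j}\Big(\sum_k c_k^{(j)}(Q\chi_A)^kQ\Big)
\]
has $n$-th block within $\kappa$ of $Q'_n$ for every $X_n\subseteq Y_j$. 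Norms of block-diagonal operators are suprema of block norms, so this expression lies within $\kappa$ of $\SOT$-$\sum_{X_n\subseteq Y_j}Q'_n$. Each such expression is a polynomial in $Q\in C^*_q(X;\H_0)$ and propagation-zero multiplication operators, hence lies in $C^*_q(X;\H_0)$. Here local compactness is inherited because every term with $k\geq 0$ carries a factor of $Q$.

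\emph{Step 4: conclusion.} Since $C^*_q(X;\H_0)$ is norm closed, letting $\kappa\to0$ puts each class sum in it. Therefore
\[
Q'=\chi_{\mathrm{good}}Q+\sum_{j=1}^{M+1}\SOT\text{-}\sum_{X_n\subseteq Y_j}Q'_n\in C^*_q(X;\H_0),
\]
where $\chi_{\mathrm{good}}$ is the characteristic function of the union of the good blocks. The main obstacle I expect is exactly the uniformity in Steps 1 and 3. Without a gap of width independent of $n$, the functions $f_j$, and hence the polynomial approximations, could not be chosen uniformly across blocks. The pigeonhole over the $M+1$ subintervals, which relies on the bound $\rank\le M$, is what supplies this uniformity.
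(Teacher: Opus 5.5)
Your argument is correct and is essentially the paper's proof. Both use finitely many uniformly chosen candidate gaps, a pigeonhole partition of the bad blocks into classes, and then functional calculus on each class. Your explicit polynomial approximation by $\chi_{Y_j}\sum_k c^{(j)}_k(Q\chi_A)^kQ$ is just an unpacking of the paper's $f_j(Q^{(j)}\chi_{A^{(j)}}Q^{(j)})$, which the paper computes directly in the $C^*$-algebra $C^*_q(X;\H_0)$.

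There is one small slip. Consecutive closed subintervals share endpoints, so an eigenvalue at a common endpoint lies in two of them, and the pigeonhole as stated can fail. For example, with $M=4$, eigenvalues $\lambda_{\min}<\varepsilon^2/4$, the second and fourth cut points, and $\lambda_{\max}=\varepsilon^2$ meet all five intervals. The fix is to take the intervals pairwise disjoint. The paper uses $M-1$ disjoint closed intervals inside $(\varepsilon^2/4,\varepsilon^2)$, relying on $\lambda_{\min}$ and $\lambda_{\max}$ lying outside that open interval.
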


\begin{proof}
When $\rank(Q_n) =1$, the bad case cannot happen and there is nothing to change. Hence without loss of generality, we assume $\rank(Q_n) >1$ for each $n\in \NN$.

Denote $\Gamma:=\{n\in \NN: Q_n \text{ is }(\varepsilon,R)\text{-bad}\}$.
For each $n\in \Gamma$, choose a unit vector $u_n\in\ran(Q_n)$ and $A_n,B_n\subseteq X_n$ witnessing badness.  By Lemma~\ref{lem:forcing}, the positive matrix $T_n:=\m_{\ran(Q_n)}(A_n)$ has at least one eigenvalue in $[0,\frac{\varepsilon^2}{4})$ and one in $[\varepsilon^2,1]$.

Choose $M-1$ pairwise disjoint closed intervals $I_1,\cdots,I_{M-1}$ in $\left(\frac{\varepsilon^2}{4},\varepsilon^2\right)$ with non-empty interiors. For each $n \in \Gamma$, since $T_n$ has at most $M$ distinct eigenvalues and at least two of them sit outside $\left(\frac{\varepsilon^2}{4},\varepsilon^2\right)$, there is $j(n) \in \{1,\cdots,M-1\}$ such that $I_{j(n)}$ is disjoint from $\sigma(T_n)$. For each $j\in \{1,\cdots,M-1\}$, set
\[
\Gamma_j:=\{n \in \Gamma: j(n)=j\}, \quad A^{(j)}:=\bigsqcup_{n\in \Gamma_j} A_n \quad \text{and} \quad Q^{(j)}:=\SOT\text{-}\sum_{n\in \Gamma_j} Q_n.
\]
It is clear that $Q^{(j)} \in C^*_{q}(X;\H_0)$. Set
\[
S^{(j)}:=Q^{(j)}\chi_{A^{(j)}}Q^{(j)} = \SOT\text{-}\sum_{n\in \Gamma_j} Q_n\chi_{A_n}Q_n \in C^*_{q}(X;\H_0).
\]
Note that for each $n\in \Gamma_j$, the restriction of $S^{(j)}_n:=Q_n\chi_{A_n}Q_n$ on $\ran(Q_n)$ is unitarily equivalent to $T_n$.

Writing $I_j=[a_j,b_j]$, we have $\sigma(S^{(j)}) \cap (a_j,b_j) = \emptyset$. Choose a continuous function $f_j:[0,1]\to[0,1]$ such that $f_j = 0$ on $[0,a_j]$ and $f_j=1$ on $[b_j,1]$. Then $F^{(j)}:=f_j(S^{(j)})$ is a projection in $C^*_{q}(X;\H_0)$ and $F^{(j)} \leq Q^{(j)}$. Write $F^{(j)}= \SOT\text{-}\sum_{n\in \Gamma_j} F^{(j)}_n$. For each $n\in \Gamma_j$, $S^{(j)}_n$ has an eigenvalue in $[0,a_j]$ and an eigenvalue in $[b_j,1]$ and hence,
\[
0<\rank(F^{(j)}_n)<\rank(Q_n).
\]
Therefore, we set
\[
Q':=\sum_{j=1}^{M-1} F^{(j)} + \SOT\text{-}\sum_{n\in \NN \setminus \Gamma} Q_n,
\]
which fulfills the task.
\end{proof}

Iterating the global reduction several times, we have:

\begin{cor}\label{cor:one step reduction}
Let $Q=\SOT$-$\sum_{n\in \NN} Q_n$ be a block-diagonal projection in $C^*_q(X;\H_0)$ with $1\leq \rank(Q_n) \leq M$ for each $n$. Then for every $\varepsilon \in (0,1)$, there are non-zero projections $\widetilde Q_n \leq Q_n$ such that the block-diagonal projection
\[
 \widetilde{Q}:=\SOT\text{-}\sum_{n\in \NN}\widetilde Q_n
\]
is in $C^*_q(X;\H_0)$ and there is $S>0$ for which every $\widetilde Q_n$ is $(\varepsilon,S)$-good.
\end{cor}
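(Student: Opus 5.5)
We iterate Proposition~\ref{prop:global reduction} at most $M$ times, keeping $\varepsilon$ fixed and setting $\delta:=\varepsilon^2/16$. Put $Q^{[0]}:=Q$. Given a block-diagonal projection $Q^{[k]}=\SOT$-$\sum_n Q^{[k]}_n\in C^*_q(X;\H_0)$ with $1\leq \rank(Q^{[k]}_n)\leq M$, use quasi-locality to pick $R_k>0$ such that $Q^{[k]}$ is $(\delta,R_k)$-quasi-local. Proposition~\ref{prop:global reduction} then gives $Q^{[k+1]}\in C^*_q(X;\H_0)$ with non-zero blocks $Q^{[k+1]}_n\leq Q^{[k]}_n$. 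On the $(\varepsilon,R_k)$-good blocks we have $Q^{[k+1]}_n=Q^{[k]}_n$, and on the $(\varepsilon,R_k)$-bad blocks the rank strictly drops. Each new block is still non-zero with rank at most $M$, so the proposition applies again at the next stage.

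The key observation is monotonicity of goodness, in two directions. First, if $Q_n'\leq Q_n$ and $Q_n$ is $(\varepsilon,R)$-good, then $Q_n'$ is $(\varepsilon,R)$-good, because $\ran(Q_n')\subseteq \ran(Q_n)$ and goodness is a condition on unit vectors of the range. Second, $(\varepsilon,R)$-good implies $(\varepsilon,R')$-good for every $R'\geq R$, since the set of pairs $A,B$ with $d(A,B)>R'$ shrinks. We may therefore arrange $R_0\leq R_1\leq\cdots$: at each stage we take $R_k$ at least $R_{k-1}$, which is allowed because $(\delta,R)$-quasi-locality persists under enlarging $R$. With this choice, a block that is good at stage $k$ stays good at every later stage.

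Now we count rank drops. Fix $n$. At each stage $k$, either $Q^{[k]}_n$ is $(\varepsilon,R_k)$-good, or its rank drops by at least one. The rank starts at most $M$ and never falls below $1$, so block $n$ can be bad at no more than $M-1$ stages. Consider the blocks of $Q^{[M-1]}$ and take $S:=R_{M-1}$. Suppose some block $Q^{[M-1]}_n$ were $(\varepsilon,S)$-bad. By the monotonicity above, this block is a sub-projection of $Q^{[k]}_n$ and $S\geq R_k$, so $Q^{[k]}_n$ was $(\varepsilon,R_k)$-bad at every stage $k\leq M-1$. That gives $M$ strict rank drops starting from rank at most $M$, which is impossible. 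Note that the case $\rank=1$ is good automatically: by Proposition~\ref{prop:global reduction} itself (and Lemma~\ref{lem:forcing}), a rank-one block cannot be bad. Hence every block of $\widetilde Q:=Q^{[M-1]}$ is $(\varepsilon,S)$-good, and $\widetilde Q\in C^*_q(X;\H_0)$ with non-zero blocks $\widetilde Q_n\leq Q_n$. The index bookkeeping (running $M$ stages rather than $M-1$) is harmless.

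The main obstacle is that the threshold $R$ changes from one iteration to the next, since it depends on the new projection's quasi-locality. The argument needs to be checked against this: goodness at an earlier $R_k$ must still mean goodness at the final $S$. The two monotonicity properties in the second paragraph, together with the choice of a non-decreasing sequence $R_k$, handle this.
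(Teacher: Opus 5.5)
Your proof is correct and follows the paper's argument: you iterate Proposition~\ref{prop:global reduction} with non-decreasing radii $R_k$, so a block that is ever good stays good, while a block that is bad at every stage is driven down to rank one and then cannot be bad. The only cosmetic differences are these. You take $S=R_{M-1}$ and handle rank-one blocks via the fact that they cannot be bad under $(\delta,R_{M-1})$-quasi-locality, whereas the paper chooses $S$ so that $\widetilde Q$ is $(\varepsilon,S)$-quasi-local. You also use monotonicity of goodness under passing to sub-projections, where the paper uses that good blocks are left unchanged.
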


\begin{proof}
If \(M=1\), simply take \(\widetilde Q=Q\). Hence in the following, we assume \(M\geq2\).

Set $\delta:=\frac{\varepsilon^2}{16}$ and we start with $Q^{(0)}:=Q$. Since $Q^{(0)}\in C_q^*(X;\H_0)$, choose $R_0>0$ such that $Q^{(0)}$ is $(\delta,R_0)$-quasi-local. Applying Proposition~\ref{prop:global reduction} to $Q^{(0)}$ and $\varepsilon, \delta, R_0$, we obtain $Q^{(1)} = \SOT\text{-}\sum_{n\in \NN} Q^{(1)}_n \leq Q^{(0)}$ in $C^*_q(X;\H_0)$ such that for $(\varepsilon, R_0)$-good $Q_n$ we have $Q^{(1)}_n = Q_n$, and for $(\varepsilon, R_0)$-bad $Q_n$ we have $0<\rank(Q^{(1)}_n) < \rank(Q_n)$. Now choose $R_1>R_0$ such that $Q^{(1)}$ is $(\delta,R_1)$-quasi-local. Applying Proposition~\ref{prop:global reduction} to $Q^{(1)}$ and $\varepsilon, \delta, R_1$, we obtain $Q^{(2)}\in C^*_q(X;\H_0)$ satisfying a similar condition.

Repeating this procedure $M-1$ times, we obtain a sequence of decreasing projections
\[
Q=Q^{(0)} \geq Q^{(1)} \geq \cdots \geq Q^{(M-1)} \quad \text{in} \quad C^*_q(X;\H_0)
\]
and $R_0 < R_1 < \cdots < R_{M-2}$.
If a block $Q_n^{(r)}$ is $(\varepsilon,R_r)$-good, then it is unchanged at that round.  Since $\{R_r\}_r$ is increasing, it remains good at every later stage.  If it is $(\varepsilon,R_r)$-bad, its rank strictly decreases.

Finally, choose $S > R_{M-2}$ such that $Q^{(M-1)}$ is $(\varepsilon,S)$-quasi-local. If a block $Q^{(r)}_n$ is $(\varepsilon,R_r)$-good for some $r$, then it remains unchanged and $Q^{(M-1)}_n = Q^{(r)}_n$, which is also $(\varepsilon,S)$-good. Otherwise, the block $Q^{(r)}_n$ is $(\varepsilon,R_r)$-bad for all $r$ and hence, the rank strictly decreases each time. This shows that $\rank(Q^{(M-1)}_n) = 1$ and then, $Q^{(M-1)}_n$ is also $(\varepsilon,S)$-good. Setting $\widetilde Q:= Q^{(M-1)}$, we conclude the proof.
\end{proof}

Now we are ready to prove the case of projections in Theorem~\ref{introthm:decomposition}. Let us state it in the following precise form:

\begin{thm}\label{thm:decomposition of projections}
 Let $\{(X_n,d_n)\}_{n\in \NN}$ be a sequence of finite metric spaces of uniformly bounded geometry, and $X:=\bigsqcup_{n\in \NN} X_n$ be their coarse disjoint union. Let $M \in \NN$ and $\H_0$ be a Hilbert space. Let $P=\SOT$-$\sum_{n\in \NN} P_n$ be a block-diagonal projection in $C^*_q(X;\H_0)$ with $1 \leq \rank(P_n) \leq M$ for each $n$. Then there is a block-rank-one projection $Q=\SOT$-$\sum_{n\in \NN} Q_n$ in $C^*_q(X;\H_0)$ such that $Q \leq P$.

Consequently, there are mutually orthogonal block-diagonal projections $P^{(1)}, \cdots, P^{(M)}$ in $C^*_q(X;\H_0)$ with block-rank at most one such that $P=P^{(1)}+\cdots+P^{(M)}$.
\end{thm}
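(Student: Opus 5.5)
The plan is to apply Corollary~\ref{cor:one step reduction} infinitely many times with $\varepsilon_k \to 0$. This produces a decreasing chain of block-diagonal quasi-local projections, and in each block we pick a unit vector in the eventual (stable) range. The key observation making this work is the following. Suppose a projection $Q_n$ is $(\varepsilon,S)$-good and $u \in \ran(Q_n)$ is a unit vector. Then by the rank-one formula from \cite[Lemma 4.3]{LSZ23}, recalled at the start of Section~\ref{ssec:single piece}, we have $\|\chi_A P_u \chi_B\| = \|u_A\|\cdot\|u_B\| < \varepsilon$ whenever $d(A,B)>S$. So \emph{any} rank-one sub-projection of a good block inherits uniform quasi-locality.

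\textbf{Step 1 (the chain).} Set $Q^{(0)}:=P$ and $\varepsilon_k:=1/(k+1)$. Inductively, apply Corollary~\ref{cor:one step reduction} to $Q^{(k-1)}$ with $\varepsilon_k$. This gives a block-diagonal projection $Q^{(k)}=\SOT$-$\sum_n Q^{(k)}_n \in C^*_q(X;\H_0)$ with non-zero blocks $Q^{(k)}_n \leq Q^{(k-1)}_n$, together with $S_k>0$ such that every $Q^{(k)}_n$ is $(\varepsilon_k,S_k)$-good. The hypothesis $1\le \rank\le M$ is preserved at each stage, so the corollary applies at every step.

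\textbf{Step 2 (stabilisation and choice of vectors).} For fixed $n$, the subspaces $\ran(Q^{(k)}_n)$ are non-zero, decreasing in $k$, and have dimension at most $M$. Hence they stabilise after finitely many steps to a non-zero subspace $V_n$. Pick a unit vector $u_n \in V_n$ and set $Q_n:=P_{u_n}$ and $Q:=\SOT$-$\sum_n Q_n$. Then $Q\le P$ and $Q$ is block-rank-one.

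\textbf{Step 3 (quasi-locality).} Local compactness is automatic, since each block is finite rank on $\ell^2(X_n;\H_0)$. Given $\varepsilon>0$, choose $k$ with $\varepsilon_k<\varepsilon$ and $R:=S_k$. For $A,B\subseteq X$ with $d(A,B)>R$, block-diagonality gives
\[
\|\chi_A Q\chi_B\| = \sup_n \|\chi_{A\cap X_n} Q_n \chi_{B\cap X_n}\| = \sup_n \|(u_n)_{A\cap X_n}\|\cdot\|(u_n)_{B\cap X_n}\|.
\]
Each term is $<\varepsilon_k$ because $u_n\in\ran(Q^{(k)}_n)$ and $Q^{(k)}_n$ is $(\varepsilon_k,S_k)$-good. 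To get a strict bound on the supremum, I would run the argument with $\varepsilon/2$ in place of $\varepsilon$. This yields $Q\in C^*_q(X;\H_0)$. The only delicate point is that the vector must be chosen in the \emph{intersection} over all $k$, not at a finite stage; this is exactly why the finite-dimensional stabilisation of Step 2 matters.

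\textbf{Step 4 (the decomposition).} Put $P^{(1)}:=Q$. Then $P-P^{(1)}$ is a block-diagonal projection in $C^*_q(X;\H_0)$ with block ranks at most $M-1$, but some blocks may now be zero. Let $N':=\{n:\rank(P_n)\ge2\}$. Then $\chi_{\bigsqcup_{n\in N'}X_n}(P-P^{(1)})$ equals $P-P^{(1)}$. This is a projection on the coarse disjoint union $\bigsqcup_{n\in N'}X_n$, which still has uniformly bounded geometry, and its blocks have ranks between $1$ and $M-1$. Applying the first part there (and extending by zero) gives $P^{(2)}$. Iterating at most $M$ times exhausts $P$ and produces mutually orthogonal $P^{(1)},\dots,P^{(M)}$ of block-rank at most one; some of the later ones may be zero.
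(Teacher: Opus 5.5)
Your proposal is correct and follows essentially the same route as the paper: iterate Corollary~\ref{cor:one step reduction} with $\varepsilon_k\to 0$, pick a unit vector in the stabilised intersection of the nested ranges, and use the rank-one identity $\|\chi_A P_u\chi_B\|=\|u_A\|\cdot\|u_B\|$ to obtain uniform quasi-locality of $Q$. Your handling of the zero blocks in the decomposition step, by restricting to the sub-union where the residual blocks are non-zero, usefully spells out what the paper leaves as ``doing the extraction $M$ times''.
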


\begin{proof}
For each $k\in \NN$, let $\varepsilon_k:=2^{-k}$. Starting from $P^{(0)}=P$, apply Corollary~\ref{cor:one step reduction} successively to obtain a sequence of block-diagonal projections
\[
 P=P^{(0)}\geq P^{(1)}\geq P^{(2)}\geq\cdots \quad \text{in} \quad C^*_q(X;\H_0)
\]
such that every block $P_n^{(k)}$ is non-zero and $(\varepsilon_k,S_k)$-good for some $S_k>0$. Here we write $P^{(k)}=\SOT\text{-}\sum_{n\in \NN} P_n^{(k)}$ for each $k\in \NN$.

For each $n\in \NN$, the nested sequence
\[
 \ran(P_n^{(0)})\supseteq\ran(P_n^{(1)})\supseteq\ran(P_n^{(2)})\supseteq\cdots
\]
consists of non-zero subspaces of a space of dimension at most $M$.  Hence it eventually stabilises, and therefore
\[
 V_n^\infty:=\bigcap_{k\geq 1}\ran(P_n^{(k)})\neq \{0\}.
\]
Choose a unit vector $u_n\in V_n^\infty$ and put $Q_n:=P_{u_n}$. Set $Q:=\SOT\text{-}\sum_{n\in \NN} Q_n$.

To show $Q$ is quasi-local, fix $\eta>0$ and choose $k$ so that $2^{-k}<\eta$. For each $n$ and $A,B \subseteq X_n$ with $d(A,B)>S_k$, since $u_n\in\ran(P_n^{(k)})$ and $P_n^{(k)}$ is $(2^{-k},S_k)$-good, we have
\[
 \|\chi_A Q_n \chi_B\| = \|(u_n)_{A}\|\cdot\|(u_n)_{B}\|<2^{-k} < \eta.
\]
Thus $Q$ is quasi-local. On the other hand, each block $Q_n$ having rank $1$ implies that $Q$ is locally compact. Thus, $Q \in C^*_q(X;\H_0)$.

The last sentence is clear by doing the extraction above $M$ times.
\end{proof}

\begin{cor}\label{cor:decomposition pi}
Let $V=\SOT$-$\sum_{n\in \NN} V_n$ be a block-diagonal partial isometry in $C^*_q(X;\H_0)$ with block-rank at most $M$. Then there are block-diagonal partial isometries $V^{(1)}, \cdots, V^{(M)}$ in $C^*_q(X;\H_0)$ with block-rank at most one such that $V=V^{(1)}+\cdots+V^{(M)}$.
\end{cor}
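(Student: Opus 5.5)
The plan is to reduce to the projection case via the polar structure of $V$. Set $P:=V^*V=\SOT$-$\sum_n V_n^*V_n$. Since $C^*_q(X;\H_0)$ is a $C^*$-algebra, $P\in C^*_q(X;\H_0)$. It is a block-diagonal projection whose blocks $P_n=V_n^*V_n$ have rank $\rank(V_n)\le M$. Blocks with $P_n=0$ are harmless. We apply Theorem~\ref{thm:decomposition of projections} only to the index set $\NN':=\{n:P_n\neq 0\}$, replacing $X$ by the subspace $\bigsqcup_{n\in\NN'}X_n$, which is again a coarse disjoint union of uniformly bounded geometry. Quasi-locality and local compactness of operators supported there are unaffected by this restriction. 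This gives mutually orthogonal block-diagonal projections $P^{(1)},\dots,P^{(M)}\in C^*_q(X;\H_0)$, each of block-rank at most one, with $P=P^{(1)}+\cdots+P^{(M)}$. On blocks with $P_n=0$ we set all $P^{(i)}_n=0$.

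Next define $V^{(i)}:=VP^{(i)}$ for $i=1,\dots,M$. Each $V^{(i)}$ lies in $C^*_q(X;\H_0)$ as a product of two elements there. It is block-diagonal, with blocks $V_nP^{(i)}_n$, because both factors are block-diagonal. Since $P^{(i)}\le P=V^*V$, it is a partial isometry:
\[
(V^{(i)})^*V^{(i)}=P^{(i)}V^*VP^{(i)}=P^{(i)}PP^{(i)}=P^{(i)},
\]
a projection. Its block-rank is at most $\rank(P^{(i)}_n)\le 1$. Finally, $\sum_i V^{(i)}=VP=VV^*V=V$.

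There is no real obstacle here. The only points needing care are the zero blocks, since Theorem~\ref{thm:decomposition of projections} assumes $\rank(P_n)\ge 1$, and checking that the products are taken within $C^*_q(X;\H_0)$. Both are handled as above.
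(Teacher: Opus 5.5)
Your proof is correct and is essentially the paper's argument: set $P=V^*V$, decompose it via Theorem~\ref{thm:decomposition of projections} into $P^{(1)}+\cdots+P^{(M)}$, and take $V^{(i)}:=VP^{(i)}$, using $P^{(i)}\leq P$ for the partial-isometry property and $VP=V$ for the sum. Your explicit handling of the zero blocks is extra care the paper omits; if only finitely many blocks are non-zero, the statement is trivial anyway.
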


\begin{proof}
Set $P=V^*V$, which is a projection in $C^*_q(X;\H_0)$. Applying Theorem~\ref{thm:decomposition of projections} to $P$, we obtain a decomposition $P=P^{(1)} + \cdots + P^{(M)}$, where each block-diagonal operator $P^{(i)}\in C^*_q(X;\H_0)$ has block-rank at most one. Then
\[
V=VP = VP^{(1)} + \cdots + VP^{(M)}.
\]
Each $VP^{(i)}$ is a partial isometry in $C^*_q(X;\H_0)$ with block-rank at most one.
\end{proof}

Finally, we deal with the general case of Theorem~\ref{introthm:decomposition}. We use a spectral method similar to that used in the proof of Proposition~\ref{prop:global reduction}.

\begin{thm}\label{thm:peeling}
Let $\{(X_n,d_n)\}_{n\in \NN}$ be a sequence of finite metric spaces of uniformly bounded geometry, and $X:=\bigsqcup_{n\in \NN} X_n$ be their coarse disjoint union. Let $M \in \NN$ and $\H_0$ be a Hilbert space. Let $T=\SOT$-$\sum_{n\in \NN} T_n$ be a block-diagonal operator in $C^*_q(X;\H_0)$ with block-rank at most $M$. Then there exists a block-diagonal operator $S=\SOT$-$\sum_{n\in \NN} S_n$ in $C^*_q(X;\H_0)$ with block-rank at most one such that for every $n$, we have
\[
\rank(T_n-S_n)=\max\left\{\rank(T_n)-1,0\right\}.
\]
Consequently, there are block-diagonal operators $T^{(1)}, \cdots, T^{(M)}$ in $C^*_q(X;\H_0)$ with block-rank at most one such that $T=T^{(1)}+\cdots+T^{(M)}$.
\end{thm}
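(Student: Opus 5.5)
The plan is to reduce to the projection case (Theorem~\ref{thm:decomposition of projections}) through the positive operator $H:=T^*T=\SOT$-$\sum_n H_n$. This operator lies in $C^*_q(X;\H_0)$, is block-diagonal, and has block-rank at most $M$. The reduction rests on a linear-algebra observation. If $Q_n$ is a rank-one projection with $Q_n\leq \operatorname{supp}(H_n)=\ker(T_n)^\perp$, then $S_n:=T_nQ_n$ has rank one and
\[
\ker(T_n-S_n)=\ker\bigl(T_n(1-Q_n)\bigr)=\ker(T_n)\oplus\ran(Q_n).
\]
Hence $\rank(T_n-S_n)=\rank(T_n)-1$. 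For blocks with $T_n=0$ we take $S_n=0$. So it suffices to find such $Q_n$ for which $S:=\SOT$-$\sum_n T_nQ_n$ is quasi-local; local compactness is then automatic, since each $S_n$ has finite rank.

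The difficulty is that the support projection of $H$ need not be in $C^*_q(X;\H_0)$, because the nonzero eigenvalues of the $H_n$ may accumulate at $0$. To handle this, I split the blocks into dyadic bands according to $\lambda_n:=\|H_n\|$. For $b\in\ZZ$, $b\geq -\log_2\|H\|-1$, set $\Lambda_b:=\{n: \lambda_n\in(2^{-b-1},2^{-b}]\}$. I then work with $H^{(b)}:=\chi_{Y_b}H$, where $Y_b:=\bigsqcup_{n\in\Lambda_b}X_n$. Since $\chi_{Y_b}$ commutes with the block-diagonal $H$, we have $H^{(b)}\in C^*_q(X;\H_0)$.

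Within a single band, I repeat the pigeonhole argument from Proposition~\ref{prop:global reduction}. Choose $M$ pairwise disjoint closed intervals with non-empty interiors inside $(2^{-b-2},2^{-b-1})$. Each $H_n$ with $n\in\Lambda_b$ has at most $M-1$ eigenvalues in that range, apart from possibly one, and in any case at most $M$ in total. So some interval $I_{j(n)}$ misses $\sigma(H_n)$. Grouping the blocks by $j(n)$ and applying continuous functional calculus with a function that vanishes below $I_j$ and equals $1$ above it, I obtain finitely many projections in $C^*_q(X;\H_0)$. Their sum $E^{(b)}$ is again a projection in $C^*_q(X;\H_0)$. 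Each block $E^{(b)}_n$ is nonzero, because it contains the top eigenvector with eigenvalue $\lambda_n>2^{-b-1}$. Each block is also dominated by the support of $H_n$, since it lives on eigenvalues $>2^{-b-2}>0$, and it has rank at most $M$.

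Next I apply Theorem~\ref{thm:decomposition of projections} to $E^{(b)}$, regarded on the sub-coarse-disjoint-union $Y_b$ (or with zero blocks elsewhere). This gives a block-rank-one projection $Q^{(b)}\leq E^{(b)}$ in $C^*_q(X;\H_0)$, and I define
\[
S:=\sum_b TQ^{(b)}.
\]
Each summand lies in $C^*_q(X;\H_0)$ and satisfies $\|TQ^{(b)}\|\leq \sup_{n\in\Lambda_b}\|H_n\|^{1/2}\leq 2^{-b/2}$. The series is therefore norm-convergent, because $b$ is bounded below, and $C^*_q(X;\H_0)$ is norm-closed. Its blocks are exactly $S_n=T_nQ_n$ with $Q_n\leq\operatorname{supp}(H_n)$, as required.

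The "Consequently" part follows by iterating $M$ times: $T-S$ again lies in $C^*_q(X;\H_0)$, is block-diagonal, and has block-rank at most $M-1$. After $M$ steps the remainder is zero.

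The main obstacle I expect is the non-uniformity of the spectral gaps: the eigenvalues of different blocks live at incomparable scales. The dyadic banding together with norm-summability of $\|TQ^{(b)}\|$ is my intended fix, and I need to check carefully that the interval pigeonhole inside each band really yields a projection with nonzero blocks that is dominated by the support.
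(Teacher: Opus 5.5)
Your proposal is correct and follows essentially the same route as the paper: dyadic banding of the blocks by norm, pigeonholing $M$ disjoint intervals inside $(2^{-b-2},2^{-b-1})$ to find a spectral gap, continuous functional calculus to produce a projection with nonzero blocks dominated by $\ker(T_n)^\perp$, Theorem~\ref{thm:decomposition of projections} to extract a block-rank-one subprojection $Q^{(b)}$, and a norm-convergent sum of the $TQ^{(b)}$. The only differences are cosmetic: you use $T^*T$ where the paper uses $|T|$, and you apply the projection theorem once per band rather than once per interval class.
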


\begin{proof}
After rescaling and discarding the zero blocks, we may assume $0<\|T_n\|\le1$ for each $n$. For $k \in \NN \cup \{0\}$, set
\[
\Gamma_k:=\{n:2^{-k-1}<\|T_n\|\le2^{-k}\}
\]
and choose $M$ pairwise disjoint closed intervals $I_{k,1},\ldots,I_{k,M}$ in $(2^{-k-2},2^{-k-1})$ with non-empty interiors. For $n\in\Gamma_k$, $|T_n|$ has at most $M$ non-zero eigenvalues and the largest one equals $\|T_n\|$, which is greater than $2^{-k-1}$. Hence, we choose $j(n) \in \{1,\cdots,M\}$ such that $I_{k,j(n)} \cap \sigma(|T_n|) = \emptyset$. For each $k$ and $j\in \{1,\cdots,M\}$, we set:
\[
\Gamma_{k,j}:=\{n\in\Gamma_k:j(n)=j\}
\quad \text{and} \quad
T^{(k,j)}:= \SOT\text{-}\sum_{n\in \Gamma_{k,j}} T_n.
\]
Write $I_{k,j}=[a_{k,j},b_{k,j}]$ and choose a continuous function $f_{k,j}:[0,1]\to[0,1]$ such that $f_{k,j} = 0$ on $[0,a_{k,j}]$ and $f_{k,j} = 1$ on $[b_{k,j},1]$. Then
\[
P^{(k,j)}:=f_{k,j}(|T^{(k,j)}|) = \SOT\text{-}\sum_{n\in \Gamma_{k,j}} f_{k,j}(|T_n|)
\]
is a projection in $C_q^*(X;\H_0)$. Writing $P^{(k,j)} = \SOT\text{-}\sum_{n\in \Gamma_{k,j}} P^{(k,j)}_n$, we have $P^{(k,j)}_n \neq 0$ with range contained in $\ran(|T_n|)$. Hence $P^{(k,j)}$ has block-rank at most $M$.

Applying Theorem~\ref{thm:decomposition of projections} to $P^{(k,j)}$, we obtain a block-diagonal projection $Q^{(k,j)} = \SOT\text{-}\sum_{n\in \Gamma_{k,j}} Q^{(k,j)}_n$ in $C^*_q(X;\H_0)$ such that $\rank(Q^{(k,j)}_n) =1$ and $Q^{(k,j)}_n \leq P^{(k,j)}_n$ for each $n\in \Gamma_{k,j}$. Set
\[
S^{(k,j)}:=T^{(k,j)}Q^{(k,j)} \quad \text{and} \quad S^{(k)}:=\sum_{j=1}^M S^{(k,j)}.
\]
Then $S^{(k)} \in C^*_q(X;\H_0)$ and we write $S^{(k)} = \SOT\text{-}\sum_{n\in \Gamma_k} S^{(k)}_n$. For each $n\in \Gamma_k$, note that $S^{(k)}_n = T_nQ^{(k,j(n))}_n$ and
\[
\ran(Q^{(k,j(n))}_n) \subseteq \ran(P^{(k,j(n))}_n) \subseteq \ran(|T_n|) = \ker(T_n)^\perp.
\]
Hence $\rank(S^{(k)}_n) =1$ and
\[
\rank(T_n - S^{(k)}_n) = \rank(T_n(\Id - Q^{(k,j(n))}_n)) = \rank(T_n) - 1.
\]
On the other hand, we have $\|S^{(k)}_n\| \leq \|T_n\| \leq 2^{-k}$ and hence, $\|S^{(k)}\| \leq 2^{-k}$.

Finally, we set
\[
S:=\sum_{k=0}^\infty S^{(k)}.
\]
Since $\Gamma_k$ are mutually disjoint, we have
\[
\left\| \sum_{k>K} S^{(k)} \right\| = \sup_{k>K} \|S^{(k)}\| \leq 2^{-K} \quad \text{for each} \quad K\in \NN.
\]
Hence the series $\sum_{k=0}^\infty S^{(k)}$ converges in norm, which shows that $S \in C^*_q(X;\H_0)$. Therefore, $S$ satisfies all the conditions required and we finish the proof.
\end{proof}

\subsection{The block-rank-one case}

Based on Theorem~\ref{thm:peeling}, the proof of Theorem~\ref{thm:main result projection} is now reduced to the block-rank-one case. Note that block-rank-one projections have already been studied in \cite[Theorem 6.6]{LSZ23} and here, we focus on the general case. We use the same notation as in the previous subsection.

Let us start with the case of partial isometries.

\begin{lem}\label{lem:rankone-pi}
Let $V=\SOT$-$\sum_{n\in \NN} V_n$ be a block-diagonal partial isometry in $C^*_q(X;\H_0)$ with block-rank at most one. Then $V \in C^*(X;\H_0)$.
\end{lem}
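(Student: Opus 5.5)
Each block $V_n$ is either zero or a rank-one partial isometry. We write $V_n = \theta_{v_n,u_n}$, that is, $V_n\xi=\langle \xi,u_n\rangle v_n$, with unit vectors $u_n, v_n\in \ell^2(X_n;\H_0)$; if $V_n=0$ we set $u_n=v_n=0$. Then $V^*V=\SOT\text{-}\sum_n P_{u_n}$ and $VV^*=\SOT\text{-}\sum_n P_{v_n}$, and both are block-rank-at-most-one projections in $C^*_q(X;\H_0)$. By \cite[Theorem 6.6]{LSZ23}, taken in the version with coefficients $\H_0$, both lie in $C^*(X;\H_0)$. If that reference only covers $\H_0=\CC$, I would redo its argument instead; the argument below is essentially that proof. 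The remaining task is to move from the two projections to $V$ itself.

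The plan is to approximate $V$ by truncating $u_n$ and $v_n$ to bounded-size pieces. Fix $\varepsilon>0$ and choose $R$ such that $V$ is $(\varepsilon,R)$-quasi-local. For each block, $\|\chi_A V_n\chi_B\|=\|(v_n)_A\|\cdot\|(u_n)_B\|$, by the rank-one computation as in Lemma~\ref{lem:norm of APB}. So quasi-locality of $V$ gives the following: whenever $d(A,B)>R$, we have $\|(v_n)_A\|\,\|(u_n)_B\|<\varepsilon$.

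The same holds for $P_{u_n}$ and $P_{v_n}$ separately, for the same $\varepsilon$ and possibly larger $R$. From this I would extract concentration: there is $S=S(\varepsilon)$ such that each $u_n$, and likewise each $v_n$, has all but $O(\sqrt\varepsilon)$ of its mass inside a ball $B(x_n,S)$. The argument is as follows. Take $x_n$ with $\|(u_n)_{B(x_n,R)}\|$ maximal. Suppose the complement of $\N_{2R}(B(x_n,R))$ carried mass at least $\sqrt{\varepsilon}$. Then, either directly or after a pigeonhole over annuli, we would find two $R$-separated sets each carrying mass at least $\sqrt{\varepsilon}$, which contradicts the product bound. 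This is the measured-expander style argument of \cite{LSZ23}; I expect some care with an iterative annulus decomposition, using uniformly bounded geometry to control the size of balls.

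Next, I would show that the centres for $u_n$ and $v_n$ are close. Since $\|(v_n)_{B(y_n,S)}\|$ and $\|(u_n)_{B(x_n,S)}\|$ are both close to $1$, the bound $\|(v_n)_A\|\|(u_n)_B\|<\varepsilon$ forces $d(B(y_n,S),B(x_n,S))\le R$. Hence $d(x_n,y_n)\le 2S+R$. Put $u_n'=\chi_{B(x_n,S)}u_n$, $v_n'=\chi_{B(y_n,S)}v_n$ and $V'=\SOT\text{-}\sum_n\theta_{v_n',u_n'}$. Then $V'$ has propagation at most $4S+R$. It is locally compact, since each block has finite rank and the blocks are orthogonal. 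Its distance from $V$ is at most $\sup_n(\|u_n-u_n'\|+\|v_n-v_n'\|)=O(\varepsilon^{1/4})$, because the blocks act on orthogonal summands. Letting $\varepsilon\to0$ gives $V\in C^*(X;\H_0)$.

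The main obstacle is the concentration step: showing that a uniformly quasi-local rank-one projection has mass concentrated on a ball of uniform radius. This is exactly the content of \cite[Theorem 6.6]{LSZ23}. If it can be quoted with coefficients $\H_0$, I would quote it in the form ``$P_{u_n}$ is uniformly approximable by $P_{u_n'}$ with $u_n'$ supported in uniformly bounded sets,'' and only carry out the centre-matching step above.
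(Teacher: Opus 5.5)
\textbf{There is a genuine gap: the concentration step is false, and with it the whole truncation strategy.} A quasi-local block-rank-one projection need not have its vectors concentrated on balls of uniform radius.

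Here is a counterexample. Let $\{X_n\}$ be expander graphs and put $u_n=v_n=|X_n|^{-1/2}\chi_{X_n}$. Then
\[
\|\chi_A P_{u_n}\chi_B\|=\frac{\sqrt{|A|\,|B|}}{|X_n|}.
\]
Expansion forces $\min(|A|,|B|)$ to be a small fraction of $|X_n|$ once $d(A,B)$ is large. Hence $V=\SOT\text{-}\sum_n P_{u_n}$ is a quasi-local block-rank-one partial isometry; it is even a ghost in the Roe algebra. Yet for every fixed $S$,
\[
\|(u_n)_{B(x,S)}\|^2\le \frac{\sup_x|B(x,S)|}{|X_n|}\to 0 .
\]
So the centres $x_n$ and radius $S$ you claim do not exist. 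The pigeonhole over annuli cannot produce two $R$-separated sets of mass at least $\sqrt\varepsilon$, because expansion forbids exactly that. Your truncated operator $V'$ satisfies $\|V_n-V'_n\|\to 1$.

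This is also not the content of \cite[Theorem 6.6]{LSZ23}. There, quasi-locality of $\SOT\text{-}\sum_n P_{u_n}$ is characterised by $\{X_n\}$, weighted by $\|u_n(\cdot)\|^2$, being measured asymptotic expanders. Membership in the Roe algebra then comes from spectral-gap and functional-calculus arguments, not from localisation of $u_n$. Your centre-matching step is therefore built on a premise that fails in exactly the cases the lemma is designed to cover.

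\textbf{The missing idea} is to reduce the partial-isometry case to the projection case, rather than re-prove the latter. Knowing $V^*V,VV^*\in C^*(X;\H_0)$ does not by itself give $V\in C^*(X;\H_0)$. Instead, consider the operator
\[
G:=\frac12\begin{pmatrix} V^*V & V^*\\ V & VV^*\end{pmatrix}
\]
on $\ell^2(X;\H_0\oplus\H_0)$. It is a projection, using $V^*VV^*=V^*$ and $VV^*V=V$. It lies in $C^*_q(X;\H_0\oplus\H_0)$. Its $n$-th block is $P_{(u_n,v_n)/\sqrt2}$ when $V_n\neq0$, so $G$ has block-rank at most one.

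Applying \cite[Theorem 6.6]{LSZ23} to $G$, with coefficient space $\H_0\oplus\H_0$, gives $G\in C^*(X;\H_0\oplus\H_0)$. Since $V$ is twice the lower-left corner of $G$, and corners preserve finite propagation and local compactness, $V\in C^*(X;\H_0)$. This is the paper's proof.
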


\begin{proof}
Let $P:=V^*V$ and $Q:=VV^*$. Define an operator $G$ on $\ell^2(X;\H_0\oplus\H_0)$ by
\[
G:=\frac12
\begin{pmatrix}
P&V^*\\
V&Q
\end{pmatrix}.
\]
A direct calculation shows that $G$ is a projection in $C^*_q(X;\H_0 \oplus \H_0)$. Moreover, if $V_n = 0$, then $G_n=0$. Otherwise, choose unit vectors $\xi_n,\eta_n$ with $V_n\xi_n=\eta_n$ and we have $G_n=P_{\frac1{\sqrt2}(\xi_n,\eta_n)}$.
Therefore, $G$ has block-rank at most one. By \cite[Theorem 6.6]{LSZ23} (where $\H_0$ is assumed to be separable, but the proof can be applied to the general case), $G \in C^*(X;\H_0 \oplus \H_0)$. Taking the lower left corner gives $V \in C^*(X;\H_0)$.
\end{proof}

Now we consider general block-rank-one operators.

\begin{prop}\label{prop:rankone-op}
Let $T=\SOT$-$\sum_{n\in \NN} T_n$ be a block-diagonal operator in $C^*_q(X;\H_0)$ with block-rank at most one. Then $T \in C^*(X;\H_0)$.
\end{prop}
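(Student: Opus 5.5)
The plan is to reduce to Lemma~\ref{lem:rankone-pi} through the polar decomposition, splitting $T$ into dyadic norm pieces exactly as in the proof of Theorem~\ref{thm:peeling}. Each block $T_n$ has rank at most one, so when $T_n \neq 0$ we can write $T_n = \|T_n\|\, V_n$, where $V_n$ is a rank-one partial isometry. The obstacle is that $V:=\SOT$-$\sum_n V_n$ need not be quasi-local when $\|T_n\|\to 0$, because the small blocks could have bad geometry. We therefore treat the blocks one norm scale at a time. After rescaling we assume $\|T_n\|\le 1$. For $k\ge 0$ we set
\[
\Gamma_k:=\{n: 2^{-k-1}<\|T_n\|\le 2^{-k}\} \quad \text{and} \quad T^{(k)}:=\SOT\text{-}\sum_{n\in\Gamma_k} T_n = T\chi_{X^{(k)}},
\]
with $X^{(k)}:=\bigsqcup_{n\in\Gamma_k}X_n$. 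Since $\chi_{X^{(k)}}$ has propagation zero, $T^{(k)}\in C^*_q(X;\H_0)$. As in Theorem~\ref{thm:peeling}, the sets $\Gamma_k$ are disjoint, so $\|\sum_{k>K}T^{(k)}\|\le 2^{-K}$. It therefore suffices to show that each $T^{(k)}$ lies in $C^*(X;\H_0)$, because $C^*(X;\H_0)$ is norm closed.

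Fix $k$. On each block with $n\in\Gamma_k$, $|T_n|$ is rank one with its single nonzero eigenvalue in $(2^{-k-1},2^{-k}]$. We choose a continuous function $g_k$ on $[0,\infty)$ with $g_k(0)=0$ and $g_k(t)=1/t$ on $[2^{-k-1},2^{-k}]$. Then $V^{(k)}:=T^{(k)}g_k(|T^{(k)}|)$ is blockwise equal to $T_n|T_n|^{-1}$ on the support of $|T_n|$, which is exactly the partial isometry $V_n$ of the polar decomposition. Because $C^*_q(X;\H_0)$ is a $C^*$-algebra, closed under continuous functional calculus and products, $V^{(k)}$ is a block-diagonal partial isometry in $C^*_q(X;\H_0)$ with block-rank at most one. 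Lemma~\ref{lem:rankone-pi} then gives $V^{(k)}\in C^*(X;\H_0)$.

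Next we recover $T^{(k)}$ from $V^{(k)}$. We have $T^{(k)}=V^{(k)}|T^{(k)}|$, and $|T^{(k)}|=\SOT$-$\sum_{n\in\Gamma_k}\|T_n\|\,P_n$ with $P_n=V_n^*V_n$. This equals $D\,V^{(k)*}V^{(k)}$, where $D:=\SOT$-$\sum_{n\in\Gamma_k}\|T_n\|\chi_{X_n}$. The operator $D$ is a bounded multiplication operator with propagation zero; it is not locally compact when $\H_0$ is infinite-dimensional, but it multiplies $C^*(X;\H_0)$ into itself. Hence $T^{(k)}=V^{(k)}V^{(k)*}V^{(k)}D = V^{(k)}D\in C^*(X;\H_0)$, using that $D$ commutes with block-diagonal operators.

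The main point requiring care is the first reduction: quasi-locality of the normalized partial isometry must be obtained uniformly. That is exactly why we restrict to a single norm scale, where $g_k$ is one fixed continuous function. Summing over $k$ then completes the argument.
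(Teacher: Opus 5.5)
Your proof is correct and follows essentially the same route as the paper: discard the small blocks, normalise the remaining blocks to a block-rank-one partial isometry in $C^*_q(X;\H_0)$, apply Lemma~\ref{lem:rankone-pi}, and recover the original operator by multiplying with a propagation-zero diagonal operator. The only difference is that the paper uses a single threshold $\Gamma_\varepsilon=\{n:\|T_n\|\geq\varepsilon\}$ and normalises directly by the bounded diagonal operator acting as $\|T_n\|^{-1}\leq\varepsilon^{-1}$ on $X_n$, so your dyadic splitting and the functional calculus $g_k(|T^{(k)}|)$ are correct but not needed.
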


\begin{proof}
Given $\varepsilon>0$, we set
\[
\Gamma_\varepsilon:=\{n \in \NN:\|T_n\|\ge\varepsilon\}
\quad \text{and} \quad
T_\varepsilon:=\SOT\text{-}\sum_{n\in \Gamma_\varepsilon} T_n.
\]
Then $T_\varepsilon\in C_q^*(X;\H_0)$ and $\|T-T_\varepsilon\| \leq \varepsilon$. For $n\in\Gamma_\varepsilon$, write $T_n=\lambda_nV_n$, where $\lambda_n:=\|T_n\|$ and $V_n$ is a rank-one partial isometry. Since $|\lambda_n^{-1}| \leq \varepsilon^{-1}$, the operator
\[
V_\varepsilon:=\SOT\text{-}\sum_{n\in \Gamma_\varepsilon} V_n = \SOT\text{-}\sum_{n\in \Gamma_\varepsilon} \lambda_n^{-1} T_n
\]
belongs to $C^*_q(X;\H_0)$. Hence, Lemma~\ref{lem:rankone-pi} shows that $V_\varepsilon \in C^*(X;\H_0)$, which implies that $T_\varepsilon\in C^*(X;\H_0)$. Letting $\varepsilon \to 0+$, we obtain that $T \in C^*(X;\H_0)$.
\end{proof}

Finally, we are ready to prove Theorem~\ref{thm:main result projection}.

\begin{proof}[Proof of Theorem~\ref{thm:main result projection}]
Due to Theorem~\ref{thm:peeling}, we can find block-diagonal operators $T^{(1)}, \cdots, T^{(M)}$ in $C^*_q(X;\H_0)$ with block-rank at most one such that $T=T^{(1)}+\cdots+T^{(M)}$. For each $i=1, \cdots, M$, Proposition~\ref{prop:rankone-op} shows that $T^{(i)} \in C^*(X;\H_0)$. Hence we conclude that $T \in C^*(X;\H_0)$.
\end{proof}

\section{The case of unbounded block-rank}\label{sec:unbounded case}

In this section, we prove Theorem~\ref{prop:infiniteprojection intro}, based on Ozawa's recent work \cite{Oza25}.

Recall that a sequence of finite metric spaces $\{X_n\}_{n\in \NN}$ is a sequence of \emph{asymptotic expanders} if they have uniformly bounded geometry, $\lim_{n\to \infty}|X_n| = \infty$ and for any $\alpha>0$, there exist $c_\alpha, R_\alpha>0$ such that for any $n\in \NN$ and $A \subseteq X_n$ with $\alpha |X_n| \leq |A| \leq \frac{1}{2}|X_n|$, we have $|\partial_{R_\alpha} A| > c_\alpha |A|$. This is the notion introduced in \cite{LNSZ21}, with uniformly bounded geometry imposed as a standing assumption. When \(\{X_n\}_n\) is a sequence of graphs and one can take \(c_\alpha=c>0\) and \(R_\alpha=1\) independently of \(\alpha\), this recovers the classical notion of an expander sequence.

We need the following stronger form of \cite[Theorem B]{Oza25}:

\begin{prop}\label{prop:Ozawa's Theorem B}
Let $(X,d)$ be a discrete metric space of bounded geometry which contains a sequence of asymptotic expanders. Then the direct product of matrix algebras $\Pi_{n\in \NN} \M_n(\CC)$ can be embedded into $G_{uq}(X)$, the ideal of ghost operators in $C^*_{uq}(X)$.
\end{prop}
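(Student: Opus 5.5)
We will follow Ozawa's construction behind \cite[Theorem B]{Oza25} and check that the image consists of ghosts. Passing to a subsequence, we may assume $X$ contains a sequence of asymptotic expanders $\{X_n\}_n$ whose mutual distances tend to infinity, with $|X_n|\to\infty$. It suffices to produce, for each $n$ in a further subsequence, a unital-into-corner $*$-homomorphism $\phi_n:\M_{n}(\CC)\to \B(\ell^2(X_{k_n}))$ such that two things hold. First, the family $\{\phi_n(a): \|a\|\le 1\}$ is uniformly quasi-local, meaning that for every $\varepsilon>0$ there is $R$ with every $\phi_n(a)$ being $(\varepsilon,R)$-quasi-local. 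Second, $\sup_{x,y}|\phi_n(a)_{x,y}|\to 0$ uniformly over $\|a\|\le1$. Given such maps, we set $\Phi((a_n)_n):=\SOT$-$\sum_n \phi_n(a_n)$. This is an injective $*$-homomorphism into $\B(\ell^2(X))$. Quasi-locality of $\Phi(a)$ follows from the first condition together with the block-diagonal structure: we take $R$ larger than the given bound, and only finitely many pairs of blocks are within distance $R$, so for $d(A,B)>R$ the norm $\|\chi_A\Phi(a)\chi_B\|$ is a supremum over blocks. The ghost property follows from the second condition, because only finitely many blocks have entries of size at least $\varepsilon$.

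To build $\phi_n$, we recall Ozawa's mechanism. On an asymptotic expander, vectors $\xi\in\ell^2(X_k)$ that are Lipschitz-like at large scale and orthogonal to constants have small mass concentration. Concretely, we choose an $n$-dimensional subspace $U_n\subseteq \ell^2(X_{k_n})$ spanned by orthonormal vectors that are \emph{spread out}, for instance low-energy eigenvectors of a suitable averaging (heat) operator at scale $R$. For the subspace $U_n$ we then need the following property: for all unit $u\in U_n$ and all $A,B$ with $d(A,B)>R_\varepsilon$, we have $\|u_A\|\,\|u_B\|<\varepsilon$. The point is that for such $u$, a set $A$ carrying a fixed proportion of the mass of $u$ forces, via the expansion inequality $|\partial_{R_\alpha}A|>c_\alpha|A|$, the mass of $u$ to be concentrated near $\N_R(A)$, so $B$ far from $A$ gets little mass. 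We then realize $\M_n(\CC)$ as $\B(U_n)$ acting on $U_n$ and as $0$ on $U_n^\perp$. In other words, $\phi_n(a):=W_naW_n^*$ with $W_n:\CC^n\to\ell^2(X_{k_n})$ an isometry onto $U_n$. The uniform quasi-local estimate for $\chi_A W_naW_n^*\chi_B$ reduces, since $\|a\|\le1$, to bounding $\|\chi_AW_n\|\,\|W_n^*\chi_B\|=\|\m(A)\|^{1/2}\|\m(B)\|^{1/2}$. Lemma~\ref{lem:norm of APB} does not apply directly here, because we need the operator norms of $\m(A)$ and $\m(B)$ separately rather than the norm of their product. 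This is exactly why Ozawa's theorem requires a stronger spreading estimate; we will extract it as his lemma and use it as a black box.

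For the ghost condition, note that $|\phi_n(a)_{x,y}|\le \|W_n^*\delta_x\|\,\|W_n^*\delta_y\| =\m(\{x\})^{1/2}\m(\{y\})^{1/2}$ in norm. So it suffices to arrange $\sup_x\|P_{U_n}\delta_x\|\to0$. This holds once $U_n$ is spread out: quasi-locality with scale $R$ plus uniformly bounded geometry gives $\|P_{U_n}\delta_x\|^2\le\|\chi_{B(x,R)}P_{U_n}\|^2\le$ (small), provided no unit vector of $U_n$ concentrates on a ball. That in turn follows if we choose $k_n$ with $|X_{k_n}|\gg n$ and make $U_n$ consist of vectors whose mass on any set of bounded cardinality tends to $0$. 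If needed, we can instead average: replacing $U_n$ by a subspace of a larger space of spread vectors only improves both estimates.

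The main obstacle is the first step, namely producing $n$-dimensional subspaces with uniform (in $n$) control $\|\chi_AP_{U_n}\chi_B\|$ via separately small $\|\chi_AP_{U_n}\|\cdot\|P_{U_n}\chi_B\|$. I would take this verbatim from the proof of \cite[Theorem B]{Oza25}, checking that his construction uses only the asymptotic expansion of $\{X_n\}$. The new content is only the observation that the same subspaces have vanishing diagonal weights $\|P_{U_n}\delta_x\|$, which yields the ghost property.
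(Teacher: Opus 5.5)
Your overall route is the paper's: take Ozawa's subspaces as a black box for quasi-locality of $\prod_n\B(U_n)$, and get ghostness from $|\phi_n(a)_{x,y}|\le\|P_{U_n}\delta_x\|\,\|P_{U_n}\delta_y\|$. The gap is in the one genuinely new step, $\sup_x\|P_{U_n}\delta_x\|\to0$. You assert it but do not prove it.

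The reasons you offer do not establish it. Quasi-locality of $P_{U_n}$ at any scale gives no control on diagonal weights: $\SOT$-$\sum_n P_{\delta_{x_n}}$ has propagation $0$ and is not a ghost. The phrase ``make $U_n$ consist of vectors whose mass on sets of bounded cardinality tends to $0$'' only restates the goal. It also has to hold uniformly over all unit vectors of $U_n$, not just for a spanning set. You never check that Ozawa's subspaces have this property. The heat-operator suggestion is not a substitute: on genuine expanders the spectral gap leaves only constants at low energy. Ozawa's subspaces are random, and the geometry enters only through the fact that two far-apart subsets of $X_n$ cannot both have density at least $\delta$.

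There are two ways to close the gap.

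\emph{The paper's way.} Use the precise form of Ozawa's lemma: $\|P_{V(n)}\chi_E\|<\varepsilon_k$ whenever $|E|\le\delta_k d(n)$. Since $d(n)\ge n$, singletons qualify for large $n$. This gives $\|P_{V(n)}\delta_y\|<\varepsilon_k$ for all $y$, hence $|(u_n)_{x,y}|\le\|P_{V(n)}\delta_y\|<\varepsilon_k$.

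\emph{Using your black box as you stated it.} Uniform quasi-locality of $\{W_naW_n^*\}$ already suffices, by a dimension count. Given $\varepsilon$, take $R$ from uniform quasi-locality. Suppose $\dim U_n>\sup_x|B(x,R)|$ and $x\in X_{k_n}$. Then there is a unit vector $v\in U_n$ vanishing on $B(x,R)$. Put $w:=P_{U_n}\delta_x/\|P_{U_n}\delta_x\|$. The contraction $vw^*\in\B(U_n)$ satisfies
\[
\|\chi_A\,vw^*\,\chi_{\{x\}}\|=|w(x)|=\|P_{U_n}\delta_x\|,\qquad A:=X_{k_n}\setminus B(x,R),
\]
and $d(A,\{x\})>R$, so $\|P_{U_n}\delta_x\|<\varepsilon$. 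Thus ghostness follows from quasi-locality of the whole corner together with $\dim U_n\to\infty$. It does not follow from quasi-locality of $P_{U_n}$ alone.

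With either fix, the rest of your argument (block-diagonal assembly, quasi-locality, ghost property) goes through.
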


\begin{proof}
  We follow the proof of \cite[Theorem B]{Oza25}. Assume that $X$ contains a sequence $\{X_n\}_{n\in \NN}$ of asymptotic expanders. For each $k\in \NN$ with $k \geq 2$, set $\delta_k := 1/k$ and $\varepsilon_k := 100\sqrt{\delta_k \log(1/\delta_k)}$. By \cite[Lemma 4]{Oza25}, for each $n\in \NN$ we can find an $n$-dimensional subspace $V_{\RR}(n) \subseteq \RR^{d(n)}$ for some $d(n) \geq n$ such that
  \[
    \max \left\{ \left\| P_{V_{\RR}(n)}|_{\ell^2(E;\RR)} \right\| : E \subset \{1,\cdots, d(n)\} \text{~and~}  |E|/d(n)\leq \delta_k \right\} < \varepsilon_k
  \]
  for all $k = 2,\cdots,n$. Set $V(n):=V_{\RR}(n) \otimes \CC$ for each $n\in \NN$.

  Taking a subsequence if necessary, we assume that $|X_n| \geq d(n)$ and then embed $V(n)$ into $\ell^2(X_n)$. This provides an embedding of $\prod_{n\in \NN} \B(V(n))$ into $\B(\ell^2(X))$. Following the proof of \cite[Theorem B]{Oza25}, this embedding has image in the quasi-local algebra $C^*_{uq}(X)$. It remains to show that its image consists of ghost operators.

  To see this, take an arbitrary $u = (u_n)_{n\in \NN} \in \prod_n \B(V(n))$ with norm $1$. Given $\varepsilon>0$, take $k\geq 2$ with $\varepsilon_k < \varepsilon$ and $N > k$ such that $1/d(n) < \delta_k$ for $n > N$. Then for any $x,y\in X_n$ with $n>N$, we have
  \begin{align*}
  |(u_n)_{x,y}| &\leq \|u_n\chi_{\{y\}}\| = \|u_n P_{V(n)}\chi_{\{y\}} \| \leq \|P_{V(n)}\chi_{\{y\}}\| = \|P_{V(n)}|_{\ell^2(\{y\})}\| = \|P_{V_{\RR}(n)}|_{\ell^2(\{y\};\RR)}\| \\
  & < \varepsilon_k < \varepsilon.
  \end{align*}
  Hence, $u$ is a ghost operator and we finish the proof.
\end{proof}

Now we prove Theorem~\ref{prop:infiniteprojection intro}.
\begin{proof}[Proof of Theorem~\ref{prop:infiniteprojection intro}]
Applying \cite[Theorem A]{Oza25} and Proposition~\ref{prop:Ozawa's Theorem B}, we assume that (after taking a subsequence if necessary) there exists a linear subspace $V(n) \subseteq \ell^2(X_n)$ such that
  \[
  \prod_n \B(V(n)) \subseteq G_{uq}(X) \quad \text{while} \quad \prod_n \B(V(n)) \nsubseteq C_{u}^*(X).
  \]
  Take a positive contractive element $A=\SOT$-$\sum_{n\in \NN} A_n \in \prod_n \B(V(n))$ while $A \notin C^*_u(X)$. Hence the spectrum of each $A_n$ is contained in $[0,1]$.

  Define $\varphi:[0,1] \to [0,1]$ by $\varphi(x) = x$ for $x\in [0,1]$. For each $k\in \NN$, define
  \[
   \varphi^{(k)}: [0,1] \longrightarrow [0,1] \quad \text{by} \quad x \mapsto \sum_{i=1}^{k} \frac{i}{k} \cdot \chi_{\left(\frac{i-1}{k}, \frac{i}{k}\right]}(x).
  \]
  It is clear that $\|\varphi^{(k)} - \varphi\|_{\infty} \leq \frac{1}{k}$. Since $A_n \in \B(V(n))$ has discrete spectrum, $\varphi^{(k)}(A_n) \in \B(V(n))$ is well-defined for each $k,n\in \NN$. Moreover, we have
  \[
    \left\|\left(\SOT\text{-}\sum_{n\in \NN} \varphi^{(k)}(A_n)\right) - A\right\| = \sup_{n \in \NN} \|\varphi^{(k)}(A_n) - A_n\| \leq \|\varphi^{(k)} - \varphi\|_{\infty} \leq \frac{1}{k}.
  \]
  Since $A \notin C^*_u(X)$, there exists $k_0 \in \NN$ such that
  \begin{equation}\label{EQ:A notin Roe}
    \SOT\text{-}\sum_{n\in \NN} \varphi^{(k_0)}(A_n) \notin C_{u}^*(X).
  \end{equation}

  On the other hand, we have
  \[
    \SOT\text{-}\sum_{n\in \NN} \varphi^{(k_0)}(A_n) = \SOT\text{-}\sum_{n\in \NN} \left(\sum_{i=1}^{k_0} \frac{i}{k_0} \chi_{\left(\frac{i-1}{k_0}, \frac{i}{k_0}\right]} (A_n) \right) = \sum_{i=1}^{k_0} \frac{i}{k_0} \left(\SOT\text{-}\sum_{n\in \NN} \chi_{\left(\frac{i-1}{k_0}, \frac{i}{k_0}\right]}(A_n)\right).
  \]
  Hence by (\ref{EQ:A notin Roe}), there exists $i_0 \in \{1,\cdots,k_0\}$ such that
  \[
   P:=\SOT\text{-}\sum_{n\in \NN} \chi_{\left(\frac{i_0-1}{k_0}, \frac{i_0}{k_0}\right]}(A_n) \notin C^*_u(X).
  \]
  Note that $P$ is a projection, which belongs to $\prod_{n\in \NN}\B(V(n)) \subseteq G_{uq}(X)$. Therefore, we conclude the proof.
\end{proof}

Finally, we show that the same phenomenon happens in the coefficient case:

\begin{cor}\label{cor:infiniteprojection Roe case}
 Let $\{X_n\}_{n \in \mathbb{N}}$ be a sequence of asymptotic expanders, $X = \bigsqcup_{n=1}^{\infty} X_n$ be their coarse disjoint union and $\H_0$ be a non-zero Hilbert space. Then there exists a ghost block-diagonal projection $Q = \SOT$-$\sum_{n\in \NN} Q_n$ in $C_{q}^*(X;\H_0)$ while $Q \notin C^*(X;\H_0)$.
\end{cor}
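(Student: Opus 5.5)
The plan is to tensor the projection $P$ from Theorem~\ref{prop:infiniteprojection intro} with a rank-one projection on $\H_0$. Let $P=\SOT$-$\sum_n P_n \in G_{uq}(X)\setminus C^*_u(X)$ be the ghost block-diagonal projection given there. Fix a unit vector $e\in\H_0$ and let $p_e$ be the rank-one projection onto $\CC e$. Set
\[
Q:=P\otimes p_e \in \B(\ell^2(X)\otimes \H_0)\cong \B(\ell^2(X;\H_0)),
\]
so that $Q=\SOT$-$\sum_n Q_n$ with $Q_n:=P_n\otimes p_e$. Thus $Q$ is a block-diagonal projection, and its matrix entries are $Q_{x,y}=P_{x,y}\,p_e$. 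Each entry is of rank at most one, hence compact, so $Q$ is locally compact. Moreover $\|Q_{x,y}\|=|P_{x,y}|$, so the ghost property of $P$ passes directly to $Q$.

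For quasi-locality, note that $\chi_A Q\chi_B=(\chi_A P\chi_B)\otimes p_e$ for any $A,B\subseteq X$. Hence $\|\chi_A Q\chi_B\|=\|\chi_A P\chi_B\|$, and $Q$ is $(\varepsilon,R)$-quasi-local whenever $P$ is. It follows that $Q\in C^*_q(X;\H_0)$ and that $Q$ is a ghost.

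The main step is to show $Q\notin C^*(X;\H_0)$. I will argue by contradiction: if $Q$ lay in the Roe algebra, then $P$ would lie in $C^*_u(X)$. To do this, define the compression $\Phi:\B(\ell^2(X;\H_0))\to\B(\ell^2(X))$ by
\[
\Phi(T):=V^*TV, \qquad \text{where } V:\ell^2(X)\to\ell^2(X;\H_0),\ V\xi=\xi\otimes e.
\]
This map is linear and contractive, and it satisfies $\Phi(T)_{x,y}=\langle T_{x,y}e,e\rangle$. Consequently, if $T_{x,y}=0$ then $\Phi(T)_{x,y}=0$, so $\ppg(\Phi(T))\le\ppg(T)$. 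This shows $\Phi(\CC[X;\H_0])\subseteq \CC_u[X]$, and by norm continuity $\Phi(C^*(X;\H_0))\subseteq C^*_u(X)$. Finally, $\Phi(Q)=P$. So if $Q\in C^*(X;\H_0)$, then $P\in C^*_u(X)$, which contradicts the choice of $P$.

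I expect no real obstacle in this argument. The only point to check carefully is the identification of $\ell^2(X;\H_0)$ with $\ell^2(X)\otimes\H_0$ at the level of matrix entries. This ensures both that the ghost and quasi-local properties transfer to $Q$, and that the compression $\Phi$ does not increase propagation.
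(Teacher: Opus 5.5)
Your proposal is correct and is essentially the paper's proof. You tensor the ghost projection $P$ from Theorem~\ref{prop:infiniteprojection intro} with a rank-one projection $p_e$ on $\H_0$, and you rule out $Q\in C^*(X;\H_0)$ by compressing with $\Id\otimes p_e$ (your map $T\mapsto V^*TV$), which is contractive, does not increase propagation, and sends $Q$ back to $P$. Your version via $V^*TV$ is slightly cleaner, since it does not need the finite-propagation approximants to be block-diagonal, which the paper's write-up tacitly assumes.
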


\begin{proof}
Take $P:=\SOT$-$\sum_{n\in \NN} P_n$ from Theorem~\ref{prop:infiniteprojection intro}, and fix a rank-one projection $e \in \B(\H_0)$. For each $n\in \NN$, set $Q_n:=P_n \otimes e$ and define $Q:= \SOT$-$\sum_{n\in \NN} Q_n = P\otimes e$. Since $P \in G_{uq}(X)$, it follows directly that $Q \in C^*_q(X;\H_0)$ and $Q$ is a ghost projection.

  On the other hand, if $Q \in C^*(X;\H_0)$, there would exist locally compact finite propagation operators $T^{(k)}=\SOT\text{-}\sum_{n\in \NN}T^{(k)}_n \in \B(\ell^2(X;\H_0))$ with $\|Q - T^{(k)}\| \to 0$ as $k \to \infty$.
  Note that $(\Id_{\ell^2(X)} \otimes e) Q (\Id_{\ell^2(X)} \otimes e) = Q$ and
  \[
   (\Id_{\ell^2(X)} \otimes e) T^{(k)} (\Id_{\ell^2(X)} \otimes e) = \SOT\text{-}\sum_{n\in \NN} (\Id_{\ell^2(X_n)}\otimes e)T^{(k)}_n(\Id_{\ell^2(X_n)}\otimes e).
  \]
  Since $(\Id_{\ell^2(X_n)}\otimes e)T^{(k)}_n(\Id_{\ell^2(X_n)}\otimes e)$ can be written as $S^{(k)}_n \otimes e$ for some $S^{(k)}_n \in \B(\ell^2(X_n))$ with propagation at most $\ppg(T^{(k)}_n)$, the operator $S^{(k)}:=\SOT$-$\sum_{n\in \NN}S^{(k)}_n$ satisfies
  \begin{align*}
  \|P-S^{(k)}\| &= \|Q - (S^{(k)} \otimes e)\|\\
  &= \left\|(\Id_{\ell^2(X)} \otimes e) Q (\Id_{\ell^2(X)} \otimes e) - \left( \SOT\text{-}\sum_{n\in \NN} (\Id_{\ell^2(X_n)}\otimes e)T^{(k)}_n(\Id_{\ell^2(X_n)}\otimes e) \right)\right\|\\
  & \leq \left\|Q - \left(\SOT\text{-}\sum_{n\in \NN}T^{(k)}_n\right)\right\| = \|Q - T^{(k)}\| \to 0 \quad \text{as} \quad k\to \infty,
  \end{align*}
  which contradicts $P \notin C^*_u(X)$ from Theorem~\ref{prop:infiniteprojection intro}.
\end{proof}

\section{The quotient isomorphism result}\label{sec:quotient isom}

The aim of this section is to prove Theorem~\ref{thm:quotient isom intro}.
Let us start with some preparations on the notion of limit operators.

\subsection{Limit operators}\label{ssec:limit operators}

Here we recall the notion of limit spaces and limit operators from \cite{GQW24, SW17}, and generalise the latter to quasi-local operators with coefficient of compact operators.

Let $(X,d)$ be a strongly discrete (\emph{i.e.}, $\{d(x,y): x,y\in X\}$ is discrete in $\RR$) metric space of bounded geometry, and $\H_0$ be a Hilbert space. Clearly, every graph with the edge-path metric is strongly discrete.

Given $\omega \in \partial \beta X$, we say that a partial translation $t: D \to R$ for $D,R \subseteq X$ is \emph{compatible with $\omega$} if $D\in\omega$.  Its continuous extension sends $\omega$ to an ultrafilter, denoted by $t(\omega)$. The \emph{limit space at $\omega$} is
\[
 X(\omega):=\{t(\omega):t\text{ is a partial translation compatible with }\omega\}.
\]
Choose a compatible family $\{t_\alpha:D_\alpha\to R_\alpha\}_{\alpha\in X(\omega)}$ satisfying $t_\alpha(\omega)=\alpha$. It is shown in \cite[Proposition~3.7]{SW17} that the formula
\begin{equation}\label{EQ:limit metric}
 d_\omega(\alpha,\beta):=\lim_{x\to\omega}d(t_\alpha(x),t_\beta(x))
\end{equation}
defines a strongly discrete metric $d_\omega$ on $X(\omega)$, which also has bounded geometry.

For $T \in C^*(X;\H_0)$ and $\omega \in \partial \beta X$, a limit operator $\Phi_\omega(T)$ is defined in \cite{SW17} when $T$ is \emph{rich} at $\omega$ (see \cite[Definition 4.1]{SW17} for a precise definition). When $\H_0$ is finite-dimensional, every operator in $C^*(X;\H_0)$ is rich. However, this is not the case in general. To overcome the issue, we modify the definition using ultrapowers.

Given $\omega \in \partial \beta X$, denote the ultrapower
\[
\K_\omega:=(\H_0)_\omega = \ell^\infty(X;\H_0) / \N_\omega, \quad \text{where} \quad \N_\omega:=\left\{(\xi_x)_x:\lim_{x\to\omega}\|\xi_x\|=0\right\}.
\]
For $(\xi_x)_x \in \ell^\infty(X;\H_0)$, denote its image in $\K_\omega$ by $[(\xi_x)]_\omega$. Note that $\K_\omega$ forms a Hilbert space with the inner product
\[
\left\langle [(\xi_x)]_\omega, [(\eta_x)]_\omega \right\rangle:= \lim_{x\to \omega} \langle \xi_x, \eta_x \rangle.
\]

For $(A_x)_x \in \ell^\infty(X;\B(\H_0))$, it induces an operator
\[
[(A_x)]_\omega: \K_\omega \to \K_\omega \quad \text{by} \quad [(\xi_x)]_\omega \mapsto [(A_x\xi_x)]_\omega.
\]
It is clear that $[(A_x)]_\omega$ is a bounded linear operator on $\K_\omega$ with norm
\begin{equation}\label{EQ:norm compute ultra}
\|[(A_x)]_\omega\| = \lim_{x\to\omega}\|A_x\|.
\end{equation}
Hence we obtain a $C^*$-homomorphism
\begin{equation}\label{EQ:map q}
q_\omega: \ell^\infty(X;\B(\H_0)) \to \B(\K_\omega) \quad \text{by} \quad (A_x)_x \mapsto [(A_x)]_\omega.
\end{equation}

For $(A_x)_{x\in D} \in \ell^\infty(D;\B(\H_0))$ with $D \in \omega$, we can extend it to some $(A_x)_{x\in X} \in \ell^\infty(X;\B(\H_0))$ (\emph{e.g.}, the zero extension). It is routine to check that the element $q_\omega((A_x)_{x\in X})$ is independent of the extension. Hence in the sequel, we will use the same notation $[(A_x)]_\omega$ for such $(A_x)_{x\in D}$  without further explanation.

For $T \in \B(\ell^2(X;\H_0))$ and $\omega \in \partial \beta X$, define the \emph{limit operator} $\Phi_\omega(T)$ by:
\begin{equation}\label{EQ:limit operator general}
\left(\Phi_\omega(T)\right)_{\alpha,\beta}
 :=q_\omega\left((T_{t_\alpha(x),t_\beta(x)})_x\right) = \big[\big(T_{t_\alpha(x),t_\beta(x)}\big)\big]_\omega \in\B(\K_\omega) \quad \text{for} \quad \alpha, \beta \in X(\omega).
\end{equation}
We have the following:

\begin{lem}\label{lem:matrix boundedness}
For $T \in \B(\ell^2(X;\H_0))$ and $\omega \in \partial \beta X$, the matrix in \eqref{EQ:limit operator general} defines an operator
\[
 \Phi_\omega(T) \in \B\left(\ell^2(X(\omega); \K_\omega)\right)
\]
with $\|\Phi_\omega(T)\|\leq\|T\|$ and we have
\begin{equation}\label{EQ:norm compute limit op}
 \left\|\left(\Phi_\omega(T)\right)_{\alpha,\beta}\right\|
 =\lim_{x\to\omega}\|T_{t_\alpha(x),t_\beta(x)}\|.
\end{equation}
Moreover, if $T$ is $(\varepsilon,R)$-quasi-local for some $\varepsilon,R>0$, then $\Phi_\omega(T)$ is $(2\varepsilon,R)$-quasi-local.
\end{lem}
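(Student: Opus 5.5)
The plan is to check boundedness on finitely supported vectors, reduce everything to finite sub-matrices over finite subsets of $X(\omega)$, and compare those with finite sub-matrices of $T$ via the partial translations, using that $q_\omega$ is an isometric-in-the-limit $*$-homomorphism. First, \eqref{EQ:norm compute limit op} is immediate from \eqref{EQ:norm compute ultra}. For boundedness, fix a finite set $F\subseteq X(\omega)$. Consider the finite block matrix $(\Phi_\omega(T)_{\alpha,\beta})_{\alpha,\beta\in F}$ acting on $\K_\omega^F$. We identify $\K_\omega^F$ with the ultrapower of $(\H_0)^F$, i.e.\ $q_\omega$ applied to $\ell^\infty(X;\B(\H_0^F))$. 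The matrix is then the image under the (matrix-amplified) ultrapower map of the family $x\mapsto (T_{t_\alpha(x),t_\beta(x)})_{\alpha,\beta\in F}$, defined on $D:=\bigcap_{\alpha\in F}D_\alpha\in\omega$.

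The key observation is that for $\alpha\neq\beta$ in $F$ we have $d_\omega(\alpha,\beta)>0$, hence $t_\alpha(x)\neq t_\beta(x)$ for $\omega$-almost all $x$. Shrinking $D$ within $\omega$, the points $\{t_\alpha(x)\}_{\alpha\in F}$ are therefore distinct. So $(T_{t_\alpha(x),t_\beta(x)})_{\alpha,\beta}$ is a compression $\chi_{S_x}T\chi_{S_x}$ of $T$ to a finite set $S_x$ (up to reindexing), and its norm is at most $\|T\|$. By \eqref{EQ:norm compute ultra} applied to $\B(\H_0^F)$, the finite block has norm $\le\|T\|$. Since these finite compressions are uniformly bounded, $\Phi_\omega(T)$ extends to a bounded operator on $\ell^2(X(\omega);\K_\omega)$ with $\|\Phi_\omega(T)\|\le\|T\|$.

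For quasi-locality, take $A,B\subseteq X(\omega)$ with $d_\omega(A,B)>R$. It suffices to bound $\|\chi_{A'}\Phi_\omega(T)\chi_{B'}\|$ for finite $A'\subseteq A$ and $B'\subseteq B$ by $\varepsilon$ (and in particular $<2\varepsilon$); the full norm is then a supremum of these, at most $\varepsilon<2\varepsilon$. Since $d_\omega$ is a limit of distances taking values in a discrete set (strong discreteness), for each pair $(\alpha,\beta)\in A'\times B'$ we get $d(t_\alpha(x),t_\beta(x))=d_\omega(\alpha,\beta)>R$ for $\omega$-almost all $x$. Intersecting finitely many sets in $\omega$, we obtain $D\in\omega$ on which the sets $A_x:=t_{A'}(x)$ and $B_x:=t_{B'}(x)$ satisfy $d(A_x,B_x)>R$ and the points are distinct as above. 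Then the block is the compression $\chi_{A_x}T\chi_{B_x}$, whose norm is $<\varepsilon$. The ultrapower norm formula gives $\|\chi_{A'}\Phi_\omega(T)\chi_{B'}\|\le\varepsilon$, and this supremum bound yields $(2\varepsilon,R)$-quasi-locality. The factor $2$ just absorbs the passage from strict to non-strict inequality.

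I expect the main obstacle to be rigor in identifying finite block matrices over the ultrapower $\K_\omega$ with the ultrapower of block matrices over $\H_0^F$. Concretely, one needs to justify that the norm of $[(A_x)]_\omega$ for $A_x\in\M_F(\B(\H_0))$ is $\lim_{x\to\omega}\|A_x\|$, i.e.\ that $(\H_0^F)_\omega\cong(\K_\omega)^F$ compatibly. This is routine since $F$ is finite. One also needs the use of strong discreteness to pass from the limit distance to eventual equality of distances.
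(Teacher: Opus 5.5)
Your proposal is correct and is essentially the paper's argument: for finite $F\subseteq X(\omega)$ you realise the block $(T_{t_\alpha(x),t_\beta(x)})_{\alpha,\beta\in F}$ as a compression of $T$ for $\omega$-almost all $x$. You then pass to the ultrapower using the (complete) contractivity of $q_\omega$, and take suprema over finite subsets, both for boundedness and for the quasi-local estimate. The only difference is that you derive the injectivity and distance-preservation of $\alpha\mapsto t_\alpha(x)$ on $F$ directly from the definition of $d_\omega$, whereas the paper quotes this from \cite[Proposition~3.10]{SW17}; strong discreteness is not even needed in your argument, since $d_\omega(\alpha,\beta)>R$ already gives $d(t_\alpha(x),t_\beta(x))>R$ on a set in $\omega$.
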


\begin{proof}
Let $F \subseteq X(\omega)$ be finite. By \cite[Proposition 3.10]{SW17}, there exists $Y \in \omega$ such that for each $x\in Y$, the map $\alpha\mapsto t_\alpha(x)$ is an isometry on $F$. For $x\in Y$, denote
\[
 U_x:\ell^2(F;\H_0)\longrightarrow\ell^2(X;\H_0),
 \qquad
\delta_\alpha\otimes\xi \mapsto \delta_{t_\alpha(x)}\otimes\xi.
\]
Clearly, we have $(U_x^*TU_x)_{\alpha,\beta} = T_{t_\alpha(x), t_\beta(x)}$ for any $\alpha, \beta \in F$. Since the map $q_\omega$ in \eqref{EQ:map q} is completely contractive and $\|U_x^*TU_x\|\leq\|T\|$ for each $x\in Y$, we obtain
\[
\|\chi_F \Phi_\omega(T) \chi_F\| \leq \sup_{x\in Y} \|U_x^*TU_x\| \leq \|T\|.
\]
Hence $\Phi_\omega(T)$ is bounded with norm at most $\|T\|$. Also \eqref{EQ:norm compute limit op} comes from \eqref{EQ:norm compute ultra}.

For the ``Moreover'' part, let $A,B\subseteq X(\omega)$ be finite subsets with $d_\omega(A,B)>R$. For $F:=A \cup B$, choose $Y$ as above. Then for each $x\in Y$, the subsets $A_x:=\{t_\alpha(x): \alpha \in A\}$ and $B_x:=\{t_\alpha(x): \alpha \in B\}$ satisfy $d(A_x, B_x) > R$, which implies $\|\chi_{A_x} T \chi_{B_x}\| < \varepsilon$. Using the complete contraction of $q_\omega$ as above, we obtain $\|\chi_A\Phi_\omega(T)\chi_B\|\leq\varepsilon<2\varepsilon$. For general $A,B \subseteq X(\omega)$ with $d_\omega(A,B)>R$, note that
\[
\|\chi_A\Phi_\omega(T)\chi_B\| = \sup\{\|\chi_{A'}\Phi_\omega(T)\chi_{B'}\|: \text{finite } A' \subseteq A \text{ and finite } B' \subseteq B\}.
\]
So we finish the proof.
\end{proof}

Focusing on quasi-local operators, we have the following:

\begin{lem}\label{lem:hom of limit op}
For every $\omega\in\partial\beta X$, the map
\[
 \Phi_\omega: C_q^*(X;\H_0)\longrightarrow
 \B\big(\ell^2(X(\omega);\K_\omega)\big)
\]
is a $C^*$-homomorphism.
\end{lem}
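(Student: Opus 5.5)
Linearity and $\ast$-preservation of $\Phi_\omega$ are immediate from \eqref{EQ:limit operator general}. Since $q_\omega$ is a $\ast$-homomorphism and $(T^*)_{t_\alpha(x),t_\beta(x)}=(T_{t_\beta(x),t_\alpha(x)})^*$, we get $(\Phi_\omega(T^*))_{\alpha,\beta}=((\Phi_\omega(T))_{\beta,\alpha})^*$, so $\Phi_\omega(T^*)=\Phi_\omega(T)^*$. By Lemma~\ref{lem:matrix boundedness}, $\Phi_\omega$ is contractive. So the only real content is multiplicativity, $\Phi_\omega(ST)=\Phi_\omega(S)\Phi_\omega(T)$ for $S,T\in C^*_q(X;\H_0)$. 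I will prove it matrix entry by matrix entry, first for $S$ of finite propagation and then by approximation in the quotient sense that quasi-locality provides.

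\textbf{Step 1: $S$ of finite propagation $r$.} Fix $\alpha,\beta\in X(\omega)$. For $x$ in a set in $\omega$, $(ST)_{t_\alpha(x),t_\beta(x)}=\sum_{z\in B(t_\alpha(x),r)}S_{t_\alpha(x),z}T_{z,t_\beta(x)}$. By bounded geometry this sum has at most $N$ terms, uniformly in $x$. Using \cite[Proposition 3.10]{SW17} and the construction of $X(\omega)$, I would show the following. There are $Y\in\omega$ and finitely many $\gamma_1,\dots,\gamma_m\in X(\omega)$ with $d_\omega(\alpha,\gamma_i)\le r$, such that for $x\in Y$ the map $\gamma_i\mapsto t_{\gamma_i}(x)$ is a bijection onto $B(t_\alpha(x),r)$. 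The idea is to partition the ball into at most $N$ partial translations $x\mapsto$ (the $k$-th point of the ball), pass to a set in $\omega$ on which each is defined, and identify each with some $t_{\gamma}$ on a set in $\omega$; compatible partial translations inducing the same ultrafilter agree on a set in $\omega$. Multiplicativity of $q_\omega$ on finite sums then gives $(\Phi_\omega(ST))_{\alpha,\beta}=\sum_i(\Phi_\omega S)_{\alpha,\gamma_i}(\Phi_\omega T)_{\gamma_i,\beta}$. The right side is $(\Phi_\omega(S)\Phi_\omega(T))_{\alpha,\beta}$, because the entries $(\Phi_\omega S)_{\alpha,\gamma}$ vanish for $d_\omega(\alpha,\gamma)>r$ by \eqref{EQ:norm compute limit op}.

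\textbf{Step 2: general quasi-local $S$.} This is the main obstacle, since $C^*_q$ is not known to be the closure of finite-propagation operators, so a norm-density argument is unavailable. Instead I would truncate matrix-entrywise. Fix $\alpha,\beta$ and $\varepsilon>0$, and take $R$ with $S$ $(\varepsilon,R)$-quasi-local. Put $S_R:=\chi_{B(t_\alpha(x),R)}S$ restricted to the row $t_\alpha(x)$. Concretely, $(ST)_{t_\alpha(x),t_\beta(x)}=\chi_{\{t_\alpha(x)\}}S\chi_{B}T\chi_{\{t_\beta(x)\}}+\chi_{\{t_\alpha(x)\}}S\chi_{X\setminus B}T\chi_{\{t_\beta(x)\}}$ with $B=B(t_\alpha(x),R)$. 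The second term has norm $<\varepsilon\|T\|$ by quasi-locality, since $d(\{t_\alpha(x)\},X\setminus B)>R$. The first term is a finite sum handled exactly as in Step 1, and it converges under $q_\omega$ to $\sum_{d_\omega(\alpha,\gamma)\le R}(\Phi_\omega S)_{\alpha,\gamma}(\Phi_\omega T)_{\gamma,\beta}$. On the limit side, Lemma~\ref{lem:matrix boundedness} says $\Phi_\omega(S)$ is $(2\varepsilon,R)$-quasi-local. Hence the remainder $\chi_{\{\alpha\}}\Phi_\omega(S)\chi_{X(\omega)\setminus B_\omega(\alpha,R)}\Phi_\omega(T)\chi_{\{\beta\}}$ has norm $\le2\varepsilon\|T\|$. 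Combining these, the two $(\alpha,\beta)$-entries differ by at most $3\varepsilon\|T\|$. Letting $\varepsilon\to0$ gives equality of all entries, so $\Phi_\omega(ST)=\Phi_\omega(S)\Phi_\omega(T)$ as bounded operators. Here local compactness is not even needed; only quasi-locality of $S$ is used.
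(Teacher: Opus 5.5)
Your proposal is correct and is essentially the paper's proof. Your Step 2 does what the paper does, down to the bound $3\varepsilon\|T\|$: split the $(\alpha,\beta)$-entry of $ST$ into the part through $B(t_\alpha(x),R)$ plus a remainder of norm $<\varepsilon\|T\|$, identify that ball with $\{t_\gamma(x):\gamma\in B_{X(\omega)}(\alpha,R)\}$ on a set in $\omega$, and bound the limit-side remainder by the $(2\varepsilon,R)$-quasi-locality of $\Phi_\omega(S)$ from Lemma~\ref{lem:matrix boundedness}. Step 1 is subsumed by Step 2, and the ball identification you sketch is what the paper cites from \cite[Proposition 3.9 and Remark 3.12]{SW17}; your ``$k$-th point of the ball'' maps need a further finite partition of the domain to become injective partial translations, which is why citing it is cleaner.
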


\begin{proof}
The proof is the Hilbert-coefficient version of \cite[Section 4]{GQW24}. For the readers' convenience, here we provide the proof of multiplicativity.

Fix $T,S\in C_q^*(X;H_0)$ and $\alpha,\beta\in X(\omega)$. Given $\varepsilon>0$, choose $R>0$ such that
\[
 \|\chi_{\{z\}}S\chi_{X\setminus B(z,R)}\|<\varepsilon \quad \text{for any} \quad z \in X.
\]
Put $F:=B_{X(\omega)}(\alpha,R)$. By \cite[Proposition 3.9 and Remark 3.12]{SW17}, there exists $Y \in \omega$ such that for any $x\in Y$, we have
\[
\left\{t_\gamma(x):\gamma\in F\right\} = B_X(t_\alpha(x),R).
\]
Hence we have
\begin{align*}
(ST)_{t_\alpha(x),t_\beta(x)} &= \chi_{\{t_\alpha(x)\}} S \chi_{B_X(t_\alpha(x),R)} T \chi_{\{t_\beta(x)\}} + \chi_{\{t_\alpha(x)\}} S \chi_{X \setminus B_X(t_\alpha(x),R)} T \chi_{\{t_\beta(x)\}} \\
& =  \sum_{\gamma \in F} S_{t_\alpha(x), t_\gamma(x)} T_{t_\gamma(x), t_{\beta}(x)} + \chi_{\{t_\alpha(x)\}} S \chi_{X \setminus B_X(t_\alpha(x),R)} T \chi_{\{t_\beta(x)\}}.
\end{align*}
Since the sum is finite, passing to ultrapower gives
\[
 \left\|
 \big(\Phi_\omega(ST)\big)_{\alpha,\beta}
 -\big(\Phi_\omega(S)\chi_F\Phi_\omega(T)\big)_{\alpha,\beta}
 \right\|
 \leq\varepsilon\|T\|.
\]
On the other hand, Lemma~\ref{lem:matrix boundedness} shows $\|\chi_{\{\alpha\}}\Phi_\omega(S)\chi_{X(\omega)\setminus F}\|<2\varepsilon$.
Combining them together, we obtain
\[
\left\|
 \big(\Phi_\omega(ST)\big)_{\alpha,\beta}
 -\big(\Phi_\omega(S)\Phi_\omega(T)\big)_{\alpha,\beta}
 \right\|
 \leq3\varepsilon\|T\|.
\]
Letting $\varepsilon \to 0+$, we conclude the proof.
\end{proof}

Finally, we record the following, which generalises both \cite[Proposition 8.2]{SW17} and a similar result in \cite{GQW24}.

\begin{prop}\label{prop:ghost intersection of limit op}
We have $G_q(X;\H_0)=\bigcap_{\omega\in\partial\beta X} \ker(\Phi_\omega)$.
\end{prop}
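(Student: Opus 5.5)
We prove both inclusions directly from the norm formula \eqref{EQ:norm compute limit op} in Lemma~\ref{lem:matrix boundedness}. The point is that, for $T \in C^*_q(X;\H_0)$, an operator $\Phi_\omega(T)$ vanishes exactly when all its matrix entries vanish. The entry $(\Phi_\omega(T))_{\alpha,\beta}$ has norm $\lim_{x\to\omega}\|T_{t_\alpha(x),t_\beta(x)}\|$.

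\emph{Ghosts are in every kernel.} Let $T \in G_q(X;\H_0)$, $\omega\in\partial\beta X$ and $\alpha,\beta\in X(\omega)$. Given $\varepsilon>0$, choose a finite $F\subseteq X$ with $\|T_{x,y}\|<\varepsilon$ off $F\times F$. Since $\omega$ is non-principal and $t_\alpha$ is finite-to-one (indeed injective), the set $\{x\in D_\alpha\cap D_\beta: t_\alpha(x)\notin F\}$ belongs to $\omega$. On this set $\|T_{t_\alpha(x),t_\beta(x)}\|<\varepsilon$, so the limit in \eqref{EQ:norm compute limit op} is at most $\varepsilon$. Hence every entry of $\Phi_\omega(T)$ vanishes, and $\Phi_\omega(T)=0$, since an operator on $\ell^2(X(\omega);\K_\omega)$ with all matrix entries zero is zero.

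\emph{Operators in every kernel are ghosts.} Let $T\in C^*_q(X;\H_0)$ with $\Phi_\omega(T)=0$ for all $\omega$, and suppose $T$ is not a ghost. Then there are $\varepsilon>0$ and pairs $(x_k,y_k)$ leaving every finite set $F\times F$ with $\|T_{x_k,y_k}\|\ge\varepsilon$. We first show that $d(x_k,y_k)$ is bounded. By quasi-locality choose $R$ so that $T$ is $(\varepsilon,R)$-quasi-local. Taking $A=\{x_k\}$ and $B=\{y_k\}$ gives $\|T_{x_k,y_k}\|=\|\chi_A T\chi_B\|<\varepsilon$ whenever $d(x_k,y_k)>R$. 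Hence $d(x_k,y_k)\le R$ for all $k$.

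By bounded geometry, the set $\{(x,y): d(x,y)\le R\}$ is a finite union of partial translations. Passing to a subsequence, we may therefore assume that all pairs $(x_k,y_k)$ lie in the graph of a single partial translation $t: D\to R'$ with $y_k=t(x_k)$. Both coordinates then escape to infinity, and since $D$ is infinite we can pick a non-principal ultrafilter $\omega$ containing $\{x_k\}$. Take $t_\alpha=\mathrm{id}$ on $X$ and $t_\beta=t$ (the latter is compatible with $\omega$, as $\{x_k\}\subseteq D$). Then \eqref{EQ:norm compute limit op} gives $\|(\Phi_\omega(T))_{\alpha,\beta}\|\ge\varepsilon$. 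This uses that the value is independent of the choice of compatible family, which follows from the construction in \cite{SW17}; alternatively, we may simply enlarge the chosen family. This contradicts $\Phi_\omega(T)=0$.

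The main delicate point is the bookkeeping in the last step. We must make sure that the particular partial translations used to index $\alpha$ and $\beta$ may be taken to be $\mathrm{id}$ and $t$. This requires the standard fact that $t_\alpha(\omega)$ determines the germ of $t_\alpha$ along $\omega$, so that the entry norms do not depend on the chosen family. We will cite \cite[Section 3]{SW17} for this.
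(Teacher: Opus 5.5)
Your proof is correct and follows the paper's argument essentially step for step. The ghost direction uses the norm formula \eqref{EQ:norm compute limit op}. For the converse you use the same chain as the paper: quasi-locality to confine the bad pairs to $E_R$, a single partial translation from the finite decomposition, and a non-principal ultrafilter on its domain giving a non-zero entry $\Phi_\omega(T)_{\omega,t(\omega)}$ (the paper uses the transposed entry $\Phi_\omega(T)_{t(\omega),\omega}$). Your remark that the entries do not depend on the chosen compatible family, because $t_\alpha(\omega)=\alpha$ determines $t_\alpha$ on a set in $\omega$, makes explicit a point the paper uses without comment.
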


\begin{proof}
If $T$ is a ghost, the points $t_\alpha(x)$ and $t_\beta(x)$ eventually leave every finite subset of $X$ for any $\omega \in \partial \beta X$ and $\alpha,\beta\in X(\omega)$.  Hence \eqref{EQ:norm compute limit op} shows that $(\Phi_\omega(T))_{\alpha,\beta}=0$, which implies $\Phi_\omega(T)=0$.

Conversely, suppose that $T$ is not a ghost. Then there exist $\varepsilon>0$ and a sequence $\{(x_n,y_n)\}_n \subseteq X \times X$ tending to infinity such that $\|T_{x_n,y_n}\| \geq \varepsilon$. Since $T$ is quasi-local, there exists $R>0$ such that $\|T_{x,y}\|<\varepsilon$ for any $x,y\in X$ with $d(x,y) >R$. Hence $(x_n,y_n) \in E_R:=\{(x,y) \in X \times X: d(x,y) \leq R\}$ for all $n\in \NN$. Decomposing $E_R$ into finitely many partial translations, we choose a partial translation $t:D \to R$ whose graph is contained in $\{(x_n,y_n)\}_n$ and hence, $\|T_{t(x),x}\| \geq \varepsilon$ for any $x\in D$. Taking $\omega \in \overline D \setminus D$, \eqref{EQ:norm compute limit op} shows that $\Phi_\omega(T)_{t(\omega),\omega} \neq 0$. Therefore, $\Phi_\omega(T) \neq 0$.
\end{proof}

As a direct corollary, we obtain:

\begin{cor}\label{cor:qnorm}
For every $T\in C_q^*(X;\H_0)$, we have $\|T+G_q(X;\H_0)\| =\sup_{\omega\in\partial\beta X}\|\Phi_\omega(T)\|$.
\end{cor}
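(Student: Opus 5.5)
We prove the two inequalities between $\|T+G_q(X;\H_0)\|$ and $\sup_{\omega}\|\Phi_\omega(T)\|$ separately, treating the family of limit operators as one $C^*$-homomorphism. Define
\[
\Phi:=\prod_{\omega\in\partial\beta X}\Phi_\omega: C^*_q(X;\H_0)\longrightarrow \prod_{\omega\in\partial\beta X}\B\big(\ell^2(X(\omega);\K_\omega)\big),
\]
where the target is the $\ell^\infty$-product $C^*$-algebra. By Lemma~\ref{lem:hom of limit op}, each $\Phi_\omega$ is a $C^*$-homomorphism. By Lemma~\ref{lem:matrix boundedness}, each is contractive, so $\Phi$ is a well-defined $C^*$-homomorphism with $\|\Phi(T)\|=\sup_\omega\|\Phi_\omega(T)\|$. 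Proposition~\ref{prop:ghost intersection of limit op} gives $\ker\Phi=\bigcap_\omega\ker\Phi_\omega=G_q(X;\H_0)$.

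Since $\Phi$ vanishes on $G_q(X;\H_0)$, it factors through the quotient $C^*_q(X;\H_0)/G_q(X;\H_0)$; call the induced map $\dot\Phi$. This induced map is an injective $*$-homomorphism between $C^*$-algebras, and injective $*$-homomorphisms of $C^*$-algebras are isometric. Hence $\|T+G_q(X;\H_0)\|=\|\dot\Phi(T+G_q)\|=\sup_\omega\|\Phi_\omega(T)\|$.

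The only checks needed are that $C^*_q(X;\H_0)$ is a $C^*$-algebra and that $G_q(X;\H_0)$ is a closed two-sided ideal, so the quotient is a $C^*$-algebra. The paper records both facts. I expect no real obstacle. The one point to state explicitly is that the sup-product of the $\Phi_\omega$ is bounded, which follows from contractivity; this is what lets the isometry of injective $*$-homomorphisms apply.
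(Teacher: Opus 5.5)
Your proof is correct and is the same argument the paper intends when it calls this a direct corollary of Proposition~\ref{prop:ghost intersection of limit op}. The product of the contractive $*$-homomorphisms $\Phi_\omega$ (Lemmas~\ref{lem:matrix boundedness} and~\ref{lem:hom of limit op}) has kernel $G_q(X;\H_0)$, so it induces an injective, hence isometric, $*$-homomorphism on the quotient $C^*_q(X;\H_0)/G_q(X;\H_0)$.
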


Back to the setting of Theorem~\ref{thm:quotient isom intro}, let $\{X_n\}_n$ be a sequence of finite graphs with bounded valency and large girth, and $X$ be their coarse disjoint union. Replacing the distances between distinct components if necessary, we assume that the metric on \(X\) is integer-valued. Note that this does not change the algebras.
The limit spaces can be computed as follows as a direct consequence of \cite[Proposition 3.10]{SW17}. The proof is left to the readers.

\begin{lem}\label{lem:limit spaces of large girth}
Let $\{X_n\}_n$ be a sequence of finite graphs with large girth and valency bounded by $D$, and $X$ be their coarse disjoint union. Then the limit space $X(\omega)$ with the metric $d_\omega$ is a tree with valency bounded by $D$ for any $\omega \in \partial \beta X$.
\end{lem}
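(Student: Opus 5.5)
The plan is to read off the structure of $X(\omega)$ from \cite[Proposition 3.10]{SW17}. For every finite $F\subseteq X(\omega)$ that result gives $Y\in\omega$ such that, for each $x\in Y$, the map $\alpha\mapsto t_\alpha(x)$ is an isometry from $(F,d_\omega)$ into $(X,d)$. Together with the ball version used in the proof of Lemma~\ref{lem:hom of limit op} (\cite[Proposition 3.9 and Remark 3.12]{SW17}), it also gives $Y\in\omega$ such that $\{t_\gamma(x):\gamma\in B_{X(\omega)}(\alpha,R)\}=B_X(t_\alpha(x),R)$ for all $x\in Y$. So every finite ball in $X(\omega)$ is, for $\omega$-many $x$, isometric to the corresponding ball in $X$ around $t_\alpha(x)$.

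First I handle the easy case: $\omega$ concentrates on finitely many components. That cannot happen for $\omega\in\partial\beta X$, since each $X_n$ is finite. Hence for any $N$ the set $\bigsqcup_{n>N}X_n$ lies in $\omega$.

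Second, I show $X(\omega)$ is a connected graph with integer metric. The metric $d_\omega$ is integer-valued, being an $\omega$-limit of integers. Declare $\alpha\sim\beta$ adjacent iff $d_\omega(\alpha,\beta)=1$. Given $\alpha,\beta$ with $d_\omega(\alpha,\beta)=k<\infty$, I would take $R=k$ and the ball $B_{X(\omega)}(\alpha,k)$. It is isometric, for $\omega$-many $x$, to $B_X(t_\alpha(x),k)$, which contains a geodesic of length $k$ from $t_\alpha(x)$ to $t_\beta(x)$ because that ball is a ball in a graph. Pulling the geodesic back gives a path of adjacent points in $X(\omega)$. Hence $d_\omega$ is the edge-path metric. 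Points at infinite distance cannot occur: a partial translation has bounded displacement, so $d_\omega(\alpha,\beta)<\infty$ always.

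Third, I bound the valency and exclude cycles. The degree of $\alpha$ equals $|B_{X(\omega)}(\alpha,1)|-1=|B_X(t_\alpha(x),1)|-1\le D$ for suitable $x$. For acyclicity, suppose $X(\omega)$ contains a nontrivial cycle of length $L$. Its vertex set $F$ is finite, and for $\omega$-many $x$ it maps isometrically, with adjacency preserved, into $X$. The image lies in some $X_n$ with $n$ arbitrarily large, since distinct components are far apart and the image has diameter at most $L$. This gives a cycle of length $L$ in $X_n$ for $n$ large, contradicting $\girth(X_n)\to\infty$.

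The main obstacle is the bookkeeping in that last step. One must check that the isometric copy really is a nontrivial cycle in the graph $X_n$, so that distinct vertices map to distinct vertices and edges map to edges, and that it lies within a single component. Isometry handles the first point. For the second, points in different components are at distance greater than $L$ once we pass to the tail, which is in $\omega$. A connected graph without nontrivial cycles is a tree, which completes the proof.
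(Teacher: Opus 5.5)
Your proposal is correct and follows the route the paper indicates. The paper states this lemma as a direct consequence of the local-isometry result \cite[Proposition 3.10]{SW17} and leaves the proof to the reader; your argument supplies exactly those details: the ball bijections give an integer-valued path metric, the same bijections give the valency bound, and any finite cycle in $X(\omega)$ transports to a cycle in $X_n$ for arbitrarily large $n$.
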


This leads us to study multipliers on trees in the next subsection.

\subsection{Uniform truncation on trees}\label{ssec:HSS}

First, let us recall the notion of Schur multiplier briefly. For a non-empty set $Y$ and a function $\psi: Y\times Y \to \CC$, if for any $A=(A_{x,y})_{x,y\in Y} \in \B(\ell^2(Y))$, the matrix $M(A) := (\psi(x,y)A_{x,y})_{x,y\in Y}$ defines an element in $\B(\ell^2(Y))$, then we call the map $M$ a \emph{Schur multiplier}. It follows from \cite[Theorem 5.1]{Pis01} (essentially due to Grothendieck) that the Schur multiplier $M$ is completely bounded on $\B(\ell^2(Y))$ with $\|M\|_{\mathrm{cb}} = \|M\|$.

Now we consider the regular tree $\T_{q}$ with degree $q$ for $3 \leq q < \infty$. The edge-path metric $d$ gives a conditionally negative definite function on $\T_{q}$. In fact, choose a root in $\T_{q}$ and let $b(x)$ be the characteristic function in $\ell^2(\T_{q})$ of the vertices on the unique path from the root to $x$. Then $\|b(x)-b(y)\|^2=d(x,y)$. Given $s\in (0,1)$, Schoenberg's theorem makes $s^d$ positive definite on $\T_{q}$. Then the formula
\[
\left(M_{s}^{\T_{q}}(A)\right)_{x,y} := s^{d(x,y)} A_{x,y} \quad \text{for} \quad A \in \B(\ell^2(\T_{q}))
\]
defines a Schur multiplier $M_{s}^{\T_{q}}$ on $\B(\ell^2(\T_{q}))$, which is unital completely positive.

In general, given a function $\psi: \NN_0 \to \CC$ where $\NN_0:=\NN \cup \{0\}$, define $\tilde{\psi}$ by:
\[
\tilde{\psi}(x,y):=\psi(d(x,y)) \quad \text{for} \quad x,y\in \T_{q},
\]
and the map $M_{\psi}^{\T_{q}}$ on $\B(\ell^2(\T_{q}))$ by
\begin{equation}\label{EQ:Schur multiplier}
\left(M_{\psi}^{\T_{q}}(A)\right)_{x,y} := \tilde{\psi}(x,y) A_{x,y} \quad \text{for} \quad A \in \B(\ell^2(\T_{q})).
\end{equation}

We record the following useful tool:

\begin{prop}[{\cite[Theorem 1.2]{HSS10}}]\label{prop:HSS}
For $\psi: \NN_0 \to \CC$, the map $M_{\psi}^{\T_{q}}$ provides a Schur multiplier on $\B(\ell^2(\T_{q}))$ \emph{if and only if} the Hankel matrix
\[
H_{\psi}:=(\psi(i+j)-\psi(i+j+2))_{i,j\in \NN_0}
\]
is in the trace class $\mathcal{S}_1(\ell^2(\NN_0))$. In this case, we have
\[
\left\|M_{\psi}^{\T_{q}}\right\| = |c_+| + |c_-| + \left( 1-\frac{1}{q-1} \right) \cdot \left\| \left(\Id - \frac{\tau}{q-1}\right)^{-1}H_\psi \right\|_1,
\]
where
\[
c_\pm := \frac{1}{2} \lim_{n\to \infty} \psi(2n) \pm \frac{1}{2} \lim_{n\to \infty} \psi(2n+1)
\]
and $\tau$ is the operator on $\mathcal{S}_1(\ell^2(\NN_0))$ given by $\tau(A):=SAS^*$ for the unilateral shift operator $S$ on $\ell^2(\NN_0)$.
\end{prop}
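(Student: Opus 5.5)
Since this is \cite[Theorem 1.2]{HSS10}, the paper will simply quote it; if I had to reprove it, the plan would be the following. The starting point is the Grothendieck-type characterisation of Schur multipliers (see \cite[Theorem 5.1]{Pis01}). A function $\varphi: Y\times Y\to\CC$ is a bounded Schur multiplier with $\|M\|\leq C$ if and only if there are a Hilbert space $\mathcal{L}$ and maps $P,Q: Y\to \mathcal{L}$ with
\[
\varphi(x,y)=\langle P(x),Q(y)\rangle \quad \text{and} \quad \sup_x\|P(x)\|\cdot\sup_y\|Q(y)\|\leq C.
\]
The norm is the infimum of such products. So the task becomes: decide when the radial kernel $\tilde\psi(x,y)=\psi(d(x,y))$ on $\T_q$ admits such a factorisation, and compute the optimal constant.

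\emph{Sufficiency.} Split $\psi$ into a part that is constant on even distances, a part that is constant on odd distances, and a part vanishing at infinity. The first two parts contribute the terms $c_+\cdot 1$ and $c_-\cdot(-1)^{d(x,y)}$. Since $\T_q$ is bipartite, $(-1)^{d(x,y)}=\epsilon(x)\epsilon(y)$ for a sign $\epsilon$, so this part has multiplier norm $|c_-|$.

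For the decaying part, I would telescope
\[
\psi(n)=\sum_{k\geq 0}\big(\psi(n+2k)-\psi(n+2k+2)\big),
\]
which expresses $\psi$ through the Hankel matrix $H_\psi$. Write $H_\psi=\sum_m \sigma_m\, a_m\otimes \overline{b_m}$ as a trace-class decomposition. Each rank-one Hankel-type piece should then be realised as $\langle P(x),Q(y)\rangle$, where $P(x)$ and $Q(y)$ are vectors supported on the geodesics from $x$ and from $y$ towards the boundary, meeting in the geodesic between them. Concretely, I would fix an end $\omega$ of the tree, use horocyclic coordinates, and sum over the $q-1$ forward branches at each vertex. This branching is what produces the weight $\frac{1}{q-1}$ and the Neumann series $\big(\Id-\frac{\tau}{q-1}\big)^{-1}=\sum_k (q-1)^{-k}\tau^k$. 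The prefactor $1-\frac1{q-1}$ should come out of the normalisation.

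\emph{Necessity and sharpness.} The radial averaging step is harmless: given any factorisation, or equivalently any bounded multiplier, averaging over the compact stabiliser of a vertex (or over the automorphism group, using amenability of the relevant compact groups) does not increase the norm. So the extremal problem may be restricted to radial data. Restricting $M_\psi^{\T_q}$ to finite balls and testing against explicit radial matrices should then give a duality pairing with $H_\psi$. The test matrices I have in mind are built from spherical functions, i.e.\ characteristic functions of spheres weighted by $(q-1)^{-n/2}$-type normalisations, and their pairing with the multiplier should yield a lower bound by $\big\|\big(\Id-\frac{\tau}{q-1}\big)^{-1}H_\psi\big\|_1$ up to the same prefactor. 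The constants $c_\pm$ would be separated out by examining the behaviour at large even and odd distances.

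I expect this lower bound to be the main obstacle. Getting the exact trace norm, rather than a bound up to constants, requires an essentially optimal choice of test operators, and an identification of the dual of the space of radial multipliers with a weighted trace class. I would first try to verify the formula in the limiting case $q=\infty$ (or for the free group), where $\tau/(q-1)$ disappears, and then carry the branching weights through the construction.
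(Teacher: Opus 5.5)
Your proposal matches the paper: it gives no argument of its own and simply quotes \cite[Theorem 1.2]{HSS10}, as you anticipated. Your reproof outline is only a plan for necessity and for the exact finite-$q$ constant, but its sufficiency half is sound and already gives everything the paper uses. Place a Hilbert--Schmidt factorisation of $H_\psi$ along the rays from $x$ and from $y$ to a fixed end; these rays share the ray beyond their confluence point. The resulting inner product telescopes to $\psi(d(x,y))-c_+-c_-(-1)^{d(x,y)}$, which yields $\|M_\psi^{\T_q}\|\leq|c_+|+|c_-|+\|H_\psi\|_1$ on any tree, and this bound is all that Lemma~\ref{lem:HSS to our case} needs.
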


Back to our case, we have:

\begin{lem}\label{lem:HSS to our case}
For $N\geq 0$ and $s\in (0,1)$, denote functions $\phi_{s,N}, \psi_{s,N}:\NN_0 \to \CC$ by
\[
\phi_{s,N}(m):=
\begin{cases}
s^m, & \text{if } m\leq N;\\
0, & \text{otherwise},
\end{cases}
\]
and
\[
\psi_{s,N}(m):=s^m - \phi_{s,N}(m) \quad \text{for} \quad m \in \NN_0.
\]
Then $M_{\psi_{s,N}}^{\T_{q}}$ is a Schur multiplier on $\B(\ell^2(\T_{q}))$ and we have
\[
\left\|M_{\psi_{s,N}}^{\T_{q}}\right\| \leq 2(Ns^{N+1}+(N+1)s^{N+2}+(1-s^2)\sum_{m>N}(m+1)s^m),
\]
which converges to $0$ as $N \to +\infty$. Hence, $M_{s,N}^{\T_{q}}:=M_{\phi_{s,N}}^{\T_{q}}$ is a Schur multiplier on $\B(\ell^2(\T_{q}))$ and we have
\[
\left\|M_{s,N}^{\T_{q}} - M_s^{\T_{q}}\right\| \to 0 \quad \text{as} \quad N \to +\infty.
\]
\end{lem}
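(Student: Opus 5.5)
The plan is to apply Proposition~\ref{prop:HSS} directly to $\psi:=\psi_{s,N}$. First compute the limits: since $\psi_{s,N}(m)=s^m$ for $m>N$ and $s\in(0,1)$, both $\lim_n\psi(2n)$ and $\lim_n\psi(2n+1)$ vanish, so $c_+=c_-=0$. It then suffices to show that the Hankel matrix $H_\psi$ is trace class and to bound $\|(\Id-\frac{\tau}{q-1})^{-1}H_\psi\|_1$.

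For the inverse, note that $\tau$ is an isometry on $\mathcal S_1$, because $S$ is an isometry and hence $\|SAS^*\|_1=\|A\|_1$. Since $q\ge 3$, the Neumann series gives
\[
\left\|\left(\Id-\tfrac{\tau}{q-1}\right)^{-1}\right\|\le \frac{1}{1-\frac{1}{q-1}}.
\]
This factor cancels the prefactor $1-\frac1{q-1}$, so $\|M_\psi^{\T_q}\|\le\|H_\psi\|_1$. The remaining task is to bound $\|H_\psi\|_1$ by the trace norm of a sum of rank-one pieces, using $\|H\|_1\le\sum_{k}\|\text{antidiagonal}_k\|_1$.

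The entries of $H_\psi$ are $h(k):=\psi(k)-\psi(k+2)$ with $k=i+j$. This gives $h(k)=0$ for $k\le N-2$, $h(k)=-s^{k+2}$ for $k\in\{N-1,N\}$, and $h(k)=s^k(1-s^2)$ for $k>N$. The $k$-th antidiagonal matrix $E_k=\sum_{i+j=k}e_{i,j}$ is a partial permutation of size $k+1$, so $\|E_k\|_1=k+1$. Summing gives
\[
\|H_\psi\|_1\le Ns^{N+1}+(N+1)s^{N+2}+(1-s^2)\sum_{m>N}(m+1)s^m .
\]
This is within the stated bound, and the extra factor $2$ leaves slack. (If the antidiagonal-splitting bound turned out to be too crude, the fallback would be to write $H_\psi$ as a finite-rank correction plus $(1-s^2)$ times the Hankel matrix of $s^m$, which is rank one.) Each term tends to $0$ as $N\to\infty$, since $\sum(m+1)s^m<\infty$ and $Ns^N\to0$.

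The final claim follows by linearity. The map $M_s^{\T_q}$ is a Schur multiplier by Schoenberg's theorem, and $\phi_{s,N}=s^{(\cdot)}-\psi_{s,N}$, so $M_{s,N}^{\T_q}=M_s^{\T_q}-M^{\T_q}_{\psi_{s,N}}$ is also a Schur multiplier. Its distance to $M_s^{\T_q}$ is $\|M^{\T_q}_{\psi_{s,N}}\|\to0$. The only delicate step is the trace-norm bookkeeping for $H_\psi$; everything else is formal.
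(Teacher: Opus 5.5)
Your proposal is correct and follows the paper's proof essentially verbatim. Both use the same antidiagonal decomposition $H_{\psi_{s,N}}=-s^{N+1}A_{N-1}-s^{N+2}A_N+(1-s^2)\sum_{m>N}s^mA_m$ with $\|A_m\|_1=m+1$, the vanishing of $c_\pm$, and the Neumann-series bound on $\bigl(\Id-\frac{\tau}{q-1}\bigr)^{-1}$ fed into Proposition~\ref{prop:HSS}. The only difference is cosmetic: you cancel the prefactor $1-\frac{1}{q-1}$ against the norm of the inverse to get $\|M_{\psi_{s,N}}^{\T_q}\|\le\|H_{\psi_{s,N}}\|_1$, whereas the paper bounds the inverse crudely by $2$ and so carries the factor $2$ in the statement.
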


\begin{proof}
Denote the Hankel matrix $H_N:=H_{\psi_{s,N}}$. For $m \in \NN$, denote the matrix $A_m=(a_{i,j})_{i,j\in \NN_0}$ such that $a_{i,j}=1$ if and only if $i+j=m$, and $0$ otherwise. Also set $A_{-1}:=0$. It is routine to check that $\|A_m\|_1=m+1$ for $m\in \NN_0$ and
\[
H_N=-s^{N+1}A_{N-1}-s^{N+2}A_N+(1-s^2)\sum_{m=N+1}^\infty s^mA_m.
\]
Hence
\[
\|H_N\|_1\leq Ns^{N+1}+(N+1)s^{N+2}+(1-s^2)\sum_{m>N}(m+1)s^m \to 0 \quad \text{as} \quad N \to +\infty,
\]
as the series $\sum_{m\ge0}(m+1)s^m$ converges.

Since $\tau$ is contractive on $\mathcal{S}_1(\ell^2(\NN_0))$, it is easy to see that
\[
\left\|\left(\Id-\frac{\tau}{q-1}\right)^{-1} H_N\right\|_1 \leq \left(1-\frac{1}{q-1}\right)^{-1} \|H_N\|_1 \leq 2\|H_N\|_1.
\]
Also $\lim_{m\to \infty} \psi_{s,N}(2m) = \lim_{m\to \infty} \psi_{s,N}(2m+1) = 0$. Hence the result follows directly from Proposition~\ref{prop:HSS}.
\end{proof}

Now consider a simplicial tree $Y$ with the edge-path metric $d$. For $s\in (0,1)$, similarly we have a Schur multiplier $M_s^Y: \B(\ell^2(Y)) \to \B(\ell^2(Y))$ by
\[
\left(M_s^Y(A)\right)_{x,y}=s^{d(x,y)}A_{x,y} \quad \text{for any} \quad A \in \B(\ell^2(Y)),
\]
which is unital completely positive. For $N\geq 0$ and $s\in (0,1)$, define $\phi_{s,N}$ as in Lemma~\ref{lem:HSS to our case} and the map $M^Y_{s,N}:=M^Y_{\phi_{s,N}}$ as in \eqref{EQ:Schur multiplier}. We have:

\begin{cor}\label{cor:HSS to our case}
For any simplicial tree $Y$ with bounded valency, the map $M^Y_{s,N}$ is a Schur multiplier on $\B(\ell^2(Y))$. Moreover, for fixed $s\in (0,1)$ we have
\[
\sup\left\{\left\| M^Y_s - M^Y_{s,N} \right\|: Y \text{ is a simplicial tree with bounded valency}\right\} \to 0
\]
as $N \to +\infty$.
\end{cor}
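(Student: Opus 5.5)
The plan is to reduce Corollary~\ref{cor:HSS to our case} to Lemma~\ref{lem:HSS to our case} by embedding an arbitrary tree of bounded valency isometrically into a regular tree. Let $Y$ be a simplicial tree whose valency is bounded by some $D$, and fix an integer $q\geq \max\{D,3\}$. We may take $Y$ connected: if it is a forest with infinite distances between components, both multipliers vanish on pairs in different components, and the difference is block-diagonal with norm the supremum over components. Add new vertices and edges to $Y$ until every vertex has degree exactly $q$. This produces a copy of $\T_q$ containing $Y$ as a subtree. Since geodesics in a tree are unique, the inclusion $\iota: Y\hookrightarrow \T_q$ is isometric for the edge-path metrics.

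Under this inclusion, $\ell^2(Y)$ is a subspace of $\ell^2(\T_q)$. For $A\in\B(\ell^2(Y))$, let $\tilde A := \chi_Y \tilde A\chi_Y$ be its extension by zero to $\ell^2(\T_q)$. A Schur multiplier whose symbol is $\tilde\psi(x,y)=\psi(d(x,y))$ commutes with compression by $\chi_Y$. Because the inclusion is isometric, restricting the symbol to $Y\times Y$ gives exactly $\psi(d_Y(x,y))$. Hence
\[
M^Y_\psi(A) = \chi_Y M^{\T_q}_\psi(\tilde A)\chi_Y,
\]
and so $\|M^Y_\psi\|\leq \|M^{\T_q}_\psi\|$. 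Applying this with $\psi=\phi_{s,N}$ shows that $M^Y_{s,N}$ is a bounded Schur multiplier. Applying it with $\psi=\psi_{s,N}$ gives $M^Y_s-M^Y_{s,N}=M^Y_{\psi_{s,N}}$, whose norm is at most $\|M^{\T_q}_{\psi_{s,N}}\|$.

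The main point to check is uniformity, since $q$ depends on $Y$. The bound in Lemma~\ref{lem:HSS to our case},
\[
2\Big(Ns^{N+1}+(N+1)s^{N+2}+(1-s^2)\sum_{m>N}(m+1)s^m\Big),
\]
does not depend on $q$ at all. It came from the estimate $(1-\tfrac1{q-1})^{-1}\leq 2$ together with the factor $1-\tfrac1{q-1}\leq 1$, and both hold for every $q\geq 3$. The limits $c_\pm$ vanish for every $q$. Therefore the supremum over all trees of bounded valency is at most this $q$-independent bound, which tends to $0$ as $N\to\infty$ by Lemma~\ref{lem:HSS to our case}.
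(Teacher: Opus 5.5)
Your proposal is correct and follows the paper's own proof. You isometrically embed $Y$ into a regular tree $\T_q$ with $q\geq\max\{D,3\}$, realise $M^Y_\psi$ as the restriction (compression) of $M^{\T_q}_\psi$, and observe that the bound in Lemma~\ref{lem:HSS to our case} does not depend on $q$; your explicit construction of the embedding and the remark on forests are only extra detail.
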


\begin{proof}
For any simplicial tree $Y$ of valency bounded by $D$, there is an isometric embedding $\iota: Y \hookrightarrow \T_D$, i.e., for any $x,y\in Y$ we have
\[
d_Y(x,y) = d_{\T_D}(\iota(x),\iota(y)).
\]
Without loss of generality, we can always take $D \geq 3$.
Hence by definition, $M_{s,N}^Y$ is the restriction of $M_{s,N}^{\T_D}$ on $\B(\ell^2(Y)) \subseteq \B(\ell^2(\T_D))$ and the same for $M_s^Y$. This implies that $M_{s,N}^Y$ is a Schur multiplier and we have
\[
\left\|M_s^Y-M_{s,N}^Y\right\| \leq \left\|M_s^{\T_D}-M_{s,N}^{\T_D}\right\|.
\]
Therefore, the result holds directly from Lemma~\ref{lem:HSS to our case}, where the speed of the convergence is independent of $D$ as shown therein.
\end{proof}

Finally, let us deal with the coefficients. Let $Y$ be a simplicial tree with the edge-path metric $d$, and $\K$ be a Hilbert space.
For $s\in (0,1)$, define a map $M_s^{Y,\K}: \B(\ell^2(Y;\K)) \to \B(\ell^2(Y;\K))$ by
\[
\left(M_s^{Y,\K}(A)\right)_{x,y}=s^{d(x,y)}A_{x,y} \quad \text{for} \quad A \in \B(\ell^2(Y;\K)),
\]
which is unital completely positive as before. For $N\geq 0$ and $s\in (0,1)$, define $\phi_{s,N}$ as in Lemma~\ref{lem:HSS to our case} and the map $M^{Y,\K}_{s,N}$ by
\[
\left(M_{s,N}^{Y,\K}(A)\right)_{x,y} := \phi_{s,N}(d(x,y)) A_{x,y} \quad \text{for} \quad A \in \B(\ell^2(Y;\K)).
\]
Since Schur multipliers are completely bounded, we have $M_{s,N}^{Y,\K}(A) \in \B(\ell^2(Y;\K))$ and moreover,
\[
\left\| M^{Y,\K}_s - M^{Y,\K}_{s,N}\right\| \leq \left\| M^{Y}_s - M^{Y}_{s,N} \right\|.
\]
Hence Corollary~\ref{cor:HSS to our case} immediately implies the following:

\begin{cor}\label{cor:HSS to our case new}
For any tree $Y$ with bounded valency and Hilbert space $\K$, the map $M^{Y,\K}_{s,N}$ is a bounded operator on $\B(\ell^2(Y;\K))$. Moreover, for fixed $s\in (0,1)$ we have
\[
\sup\left\{\left\| M^{Y,\K}_s - M^{Y,\K}_{s,N} \right\|: Y \text{ is a tree with bounded valency}, \K \text{ is a Hilbert space}\right\} \to 0
\]
as $N \to +\infty$.
\end{cor}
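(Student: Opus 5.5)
The statement reduces to Corollary~\ref{cor:HSS to our case} together with complete boundedness of Schur multipliers, so the plan has three steps.

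\emph{Step 1: identify the coefficient maps with amplifications.} Fix a tree $Y$ of bounded valency and a Hilbert space $\K$. Using $\ell^2(Y;\K)\cong\ell^2(Y)\otimes\K$, view $\B(\ell^2(Y;\K))$ as the von Neumann tensor product $\B(\ell^2(Y))\bar\otimes\B(\K)$. The map $M^{Y,\K}_{s,N}$ multiplies the $(x,y)$-entry $A_{x,y}\in\B(\K)$ by the scalar $\phi_{s,N}(d(x,y))$. It should therefore coincide with the normal amplification $M^Y_{s,N}\bar\otimes\Id_{\B(\K)}$ of the scalar Schur multiplier $M^Y_{s,N}$.

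To make this rigorous without worrying about normality, I would work on finite subsets. For finite $F\subseteq Y$, compressing by $\chi_F\otimes 1$ commutes with Schur multiplication. On $\B(\ell^2(F;\K))\cong \M_{|F|}(\B(\K))$, the map is literally $M^F_{s,N}\otimes\Id$. Passing to a finite-dimensional subspace $L\subseteq\K$ and compressing further, it becomes $M^F_{s,N}\otimes\Id_{\M_k}$ on $\M_{|F|}(\M_k)$, where $k=\dim L$.

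\emph{Step 2: bound the finite compressions.} By \cite[Theorem 5.1]{Pis01}, a Schur multiplier satisfies $\|M\|_{\mathrm{cb}}=\|M\|$. Hence each such amplification has norm at most $\|M^F_{s,N}\|\le\|M^Y_{s,N}\|$, and the same holds for the difference $M^F_s-M^F_{s,N}$, which is again a Schur multiplier.

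Given $A\in\B(\ell^2(Y;\K))$, the norm of the matrix $(\phi_{s,N}(d(x,y))A_{x,y})$ is the supremum over finite $F$ and finite-rank projections $p_L$ of the compressed norms. Each compressed norm is bounded by $\|M^Y_{s,N}\|\,\|A\|$. So the matrix defines a bounded operator, and
\[
\left\|M^{Y,\K}_s-M^{Y,\K}_{s,N}\right\|\le \left\|M^Y_s-M^Y_{s,N}\right\|.
\]

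\emph{Step 3: take the supremum.} The right-hand side does not depend on $\K$. Taking the supremum over $Y$ and $\K$, Corollary~\ref{cor:HSS to our case} gives convergence to $0$ as $N\to\infty$.

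The main obstacle is Step 1: justifying that the entrywise-defined matrix is a genuine bounded operator equal to the amplification, in particular when $\K$ is non-separable. The finite-compression and supremum argument above handles this, because boundedness of an operator matrix is detected on finite blocks with finite-rank coefficient compressions.
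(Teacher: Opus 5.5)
Your proposal is correct and follows the same route as the paper. The paper simply observes that, since Schur multipliers are completely bounded, $\|M^{Y,\K}_s - M^{Y,\K}_{s,N}\|\le\|M^Y_s-M^Y_{s,N}\|$, and then invokes Corollary~\ref{cor:HSS to our case}. Your compression argument over finite $F\subseteq Y$ and finite-dimensional $L\subseteq\K$ just supplies the details the paper leaves implicit, including the case of non-separable $\K$.
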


\subsection{Proof of Theorem~\ref{thm:quotient isom intro}}\label{ssec:proof of quotient isom}

We need the following auxiliary result.

\begin{lem}\label{lem:uniform quasilocal and Roe}
Let $\{Y_j\}_j$ be a family of simplicial trees with valency at most $D$ and operators $A_j \in \B(\ell^2(Y_j;\K_j))$ with
$\sup_j \|A_j\|$ finite, where $\K_j$ are Hilbert spaces. Assume $A_j$ are uniformly quasi-local, \emph{i.e.}, for any $\varepsilon>0$, there is $S>0$ such that each $A_j$ is $(\varepsilon,S)$-quasi-local. Then we have
\[
 \sup_j \left\|M_s^{Y_j,\K_j}(A_j)-A_j\right\| \to 0 \quad \text{as} \quad s \to 1-.
\]
\end{lem}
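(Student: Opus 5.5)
The plan is to reduce the statement to a uniform commutator estimate, using the Stinespring-type factorisation of $M_s^{Y,\K}$ that comes from the tree structure. Everything must be controlled in terms of the valency bound $D$, the quasi-locality data and $\sup_j\|A_j\|$ only.

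\textbf{Step 1: factorisation of $M_s$ on a tree.} For a tree $Y$ with edge-path metric $d$, $s^{d}$ is positive definite. I would realise it explicitly. Fix an end or root of $Y$ and write $s^{d(x,y)}=\langle \xi_x,\xi_y\rangle$ for unit vectors $\xi_x^{(s)}$ in some Hilbert space $\mathcal{E}$. The natural candidate is a normalised, geometrically decaying sum along the geodesic from $x$ towards the root, in the spirit of Haagerup's construction. Two properties are needed, uniformly over trees of valency at most $D$:
\begin{enumerate}
\item $\sup_{d(x,y)\le 1}\|\xi_x-\xi_y\|\to 0$ as $s\to1-$, which is automatic since $\|\xi_x-\xi_y\|^2=2(1-s^{d(x,y)})$;
\item each $\xi_x$ is, up to $\eta$ in norm, supported on a set of coordinates indexed by a ball of radius $L(\eta,s)$ around $x$.
\end{enumerate}
Then, with $V:\ell^2(Y;\K)\to \ell^2(Y;\K)\otimes\mathcal{E}$ given by $\delta_x\otimes k\mapsto \delta_x\otimes k\otimes\xi_x$, one gets $M^{Y,\K}_s(A)=V^*(A\otimes 1)V$. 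Hence
\[
A-M_s^{Y,\K}(A)=V^*\big(V A-(A\otimes1)V\big),
\]
and it suffices to bound the ``commutator'' $VA-(A\otimes 1)V$ uniformly in $j$.

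\textbf{Step 2: the commutator estimate (main obstacle).} Expand $V$ in an orthonormal basis $(e_k)$ of $\mathcal{E}$. This gives $VA-(A\otimes 1)V=\sum_k [f_k,A]\otimes e_k$, where $f_k(x)=\langle \xi_x,e_k\rangle$. The functions $(f_k)_k$ have small ``$\ell^2$-Lipschitz constant'' (property 1) and, by property 2, each $f_k$ is essentially supported on a region of bounded radius. To handle this I would follow the \v{S}pakula--Zhang/Ozawa proof that property A implies $C^*_u=C^*_{uq}$.
\begin{enumerate}
\item Fix $\varepsilon>0$ and choose $S$ with all $A_j$ being $(\varepsilon,S)$-quasi-local.
\item Split $A$ against a partition of $Y$ into finitely many families of $S$-separated pieces. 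Such a partition exists with a number of colours depending only on $D$ and $L$, by bounded geometry of trees of valency at most $D$.
\item On each family, estimate $\sum_k\|[f_k,A]\|^2$ by $\sup_{d(x,y)\le S}\|\xi_x-\xi_y\|^2\cdot\|A\|^2$ plus $O(\varepsilon)$ from quasi-locality.
\end{enumerate}
The difficulty is that $[f_k,A]$ has no finite propagation, so the off-$S$ contributions must be summed over $k$ with control independent of the tree. This is where property 2 and the uniform colouring are used. If this direct route fails, my fallback is to invoke the quantitative form of \cite{SZ20}: trees with valency at most $D$ have property A with uniform constants, so uniformly quasi-local, uniformly bounded operators are uniformly approximable by operators of uniformly bounded propagation $R$.

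\textbf{Step 3: conclusion.} Under the fallback, write $A_j=B_j+E_j$ with $\|E_j\|<\varepsilon$, $\operatorname{prop}(B_j)\le R$ and $\|B_j\|\le \sup_j\|A_j\|+1$. Since $M_s$ is unital completely positive, $\|M_s(E_j)-E_j\|\le 2\varepsilon$. For the finite-propagation part, decompose $B_j=\sum_{m=0}^R P_m(B_j)$, where $P_m$ is the Schur multiplier onto the entries with $d(x,y)=m$. By Proposition~\ref{prop:HSS} (the Hankel matrix of $\chi_{\{m\}}$ has finite rank), each $P_m$ has norm bounded by a constant $C_m$ depending only on $m$, uniformly over trees of valency at most $D$ and over coefficients $\K$, exactly as in Corollary~\ref{cor:HSS to our case new}. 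Therefore
\[
\|M_s(B_j)-B_j\|\le \sum_{m=1}^R (1-s^m)\,C_m\,\|B_j\|\xrightarrow[s\to1-]{}0
\]
uniformly in $j$. Combining the two parts yields $\limsup_{s\to1-}\sup_j\|M_s^{Y_j,\K_j}(A_j)-A_j\|\le 2\varepsilon$, and $\varepsilon>0$ is arbitrary.
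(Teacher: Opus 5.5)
Your proof is correct once it runs through the fallback, and that fallback is the paper's argument. The paper also uses uniform Property A of trees of valency at most $D$ and a family version of \cite[Theorem 3.3]{SZ20} to obtain $B_j$ with $\ppg(B_j)\le R$ and $\sup_j\|A_j-B_j\|<\varepsilon$, and then absorbs the error using $\|M_s\|\le 1$.

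The one real difference is how you estimate the finite-propagation part.
\begin{itemize}
\item The paper uses the elementary Schur test, which gives $\|B_j-M_s(B_j)\|\le (D+1)^R(1-s^R)\|B_j\|$.
\item You split $B_j=\sum_{m\le R}P_m(B_j)$ and bound the sphere multipliers $P_m$ via Proposition~\ref{prop:HSS}. This is valid. The Hankel matrix of $\chi_{\{m\}}$ is $A_m-A_{m-2}$ in the notation of Lemma~\ref{lem:HSS to our case}, and $c_\pm=0$. So $\|P_m\|\le 2m$ on every $\T_q$, hence on every tree and with any coefficients.
\end{itemize}
Your route gives the valency-free bound $R(R+1)(1-s^R)\|B_j\|$. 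That buys nothing here, because the propagation-approximation step already depends on $D$. The Schur test also avoids the Haagerup--Steenstrup--Szwarc machinery altogether.

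Steps 1--2 should be dropped rather than kept as the main route, since as written they do not give a proof.
\begin{itemize}
\item A minor point: vectors summed along geodesics toward a root do not realise $s^{d}$ exactly on a finite tree. You need an end, for example after embedding into $\T_D$.
\item More seriously, $\sum_k\|[f_k,A]\|^2$ is far too crude a bound for the column operator $\sum_k[f_k,A]\otimes e_k$. It adds up a local contribution from every coordinate $k$. For instance, $f_k$ jumps by $\sqrt{1-s^2}$ across the edge from $k$ to the next vertex towards the end. So the sum grows with the size of the tree.
\end{itemize}
What has to be controlled is $\bigl\|\sum_k[f_k,A]^*[f_k,A]\bigr\|$. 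Doing this uniformly over trees for a merely quasi-local $A$ is exactly the content of \cite{SZ20}, which is why the fallback is needed.
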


\begin{proof}
First, it is a standard result that $\{Y_j\}_j$ has uniform Property A (\emph{e.g.}, they have asymptotic dimension at most one uniformly). Using a family version of \cite[Theorem 3.3]{SZ20} (either by chasing the parameters directly in the proof of \cite[Theorem 3.3]{SZ20} or by consulting the proof of \cite[Theorem 5.1]{JZZ23} for more details), we obtain that for any $\varepsilon>0$, there exist $R>0$ and $B_j \in \B(\ell^2(Y_j;\K_j))$ with $\ppg(B_j) \leq R$ for each $j$ such that $\sup_j\|A_j-B_j\|<\varepsilon$. Hence
\[
\left\|A_j-M_s^{Y_j,\K_j}(A_j)\right\| \leq 2\varepsilon + \left\|B_j-M_s^{Y_j,\K_j}(B_j)\right\|.
\]
Since $\left( B_j-M_s^{Y_j,\K_j}(B_j) \right)_{x,y} = (1-s^{d(x,y)})\cdot (B_j)_{x,y}$, the Schur test implies that
\begin{align*}
\left\|B_j-M_s^{Y_j,\K_j}(B_j)\right\| &\leq \max\left\{\sup_{x\in Y_j} \sum_{y\in Y_j} \left\|\left( B_j-M_s^{Y_j,\K_j}(B_j) \right)_{x,y}\right\|, \sup_{y\in Y_j} \sum_{x\in Y_j} \left\|\left( B_j-M_s^{Y_j,\K_j}(B_j) \right)_{x,y}\right\|\right\} \\
&\leq (D+1)^R\cdot (1-s^R) \cdot \|B_j\|.
\end{align*}
This concludes the proof.
\end{proof}

Finally, we manage to prove Theorem~\ref{thm:quotient isom intro}.

\begin{proof}[Proof of Theorem~\ref{thm:quotient isom intro}]
Since $G(X;\H_0)=C^*(X;\H_0)\cap G_q(X;\H_0)$, the induced map on the quotients is injective. It remains to prove surjectivity. 

Fix $T\in C_q^*(X;\H_0)$. Lemma~\ref{lem:limit spaces of large girth} shows that all limit spaces are trees of uniformly bounded valency, and Lemma~\ref{lem:matrix boundedness} shows that $\{\Phi_\omega(T)\}_\omega$ are uniformly quasi-local. 

For $0<s<1$ and $N\geq0$, define an operator $T_{s,N}$ on $\ell^2(X;\H_0)$ by
\[
 (T_{s,N})_{x,y}
 =\phi_{s,N}(d(x,y))T_{x,y} \quad \text{for any} \quad x,y\in X,
\]
where $\phi_{s,N}$ is the function defined in Lemma~\ref{lem:HSS to our case}. Then $T_{s,N}$ is a bounded and locally compact operator with propagation at most $N$. Moreover, for any $\omega \in \partial \beta X$, take a compatible family $\{t_\alpha:D_\alpha\to R_\alpha\}_{\alpha\in X(\omega)}$. For any $\alpha, \beta \in X(\omega)$, we have
\[
\left(\Phi_\omega(T_{s,N})\right)_{\alpha, \beta} = \Big[\Big(
 \phi_{s,N}(d(t_\alpha(x),t_\beta(x)))
 T_{t_\alpha(x),t_\beta(x)}
 \Big)\Big]_\omega.
\]
Since the metric takes value only in $\NN_0$, $d_\omega(\alpha, \beta) = \lim_{x\to \omega}d(t_\alpha(x),t_\beta(x))$ implies that there exists $Y \in \omega$ such that $d(t_\alpha(x),t_\beta(x)) = d_\omega(\alpha, \beta)$ for any $x\in Y$. Hence
\[
\left(\Phi_\omega(T_{s,N})\right)_{\alpha, \beta}  = \phi_{s,N}(d_\omega(\alpha,\beta)) \cdot
 \big(\Phi_\omega(T)\big)_{\alpha,\beta}.
\]
This shows that
\[
\Phi_\omega(T_{s,N}) = M_{s,N}^{X(\omega),\K_\omega}(\Phi_\omega(T)).
\]

Applying Corollary~\ref{cor:qnorm}, we have
\begin{align*}
\|T_{s,N} &- T + G_q(X;\H_0)\| = \sup_{\omega \in \partial \beta X} \left\| M_{s,N}^{X(\omega),\K_\omega}(\Phi_\omega(T)) - \Phi_\omega(T) \right\| \\
&\leq \sup_{\omega \in \partial \beta X} \left\| M_{s,N}^{X(\omega),\K_\omega}(\Phi_\omega(T)) - M_{s}^{X(\omega),\K_\omega}(\Phi_\omega(T)) \right\| + \sup_{\omega \in \partial \beta X} \left\| M_{s}^{X(\omega),\K_\omega}(\Phi_\omega(T)) - \Phi_\omega(T) \right\|.
\end{align*}
Given $\varepsilon>0$, Lemma~\ref{lem:uniform quasilocal and Roe} gives $s\in (0,1)$ such that
\[
 \sup_{\omega \in \partial \beta X} \left\|M_s^{X(\omega),\K_\omega}(\Phi_\omega(T)) - \Phi_\omega(T)\right\| \leq \frac{\varepsilon}{2}.
\]
For such $s$, Corollary~\ref{cor:HSS to our case new} gives $N \in \NN$ such that
\[
\sup_{\omega \in \partial \beta X} \left\| M_{s,N}^{X(\omega),\K_\omega}(\Phi_\omega(T)) - M_{s}^{X(\omega),\K_\omega}(\Phi_\omega(T)) \right\| \leq \frac{\varepsilon}{2}.
\]
Combining them together, $\|T_{s,N}-T + G_q(X;\H_0)\|\leq \varepsilon$ for $s,N$ above.

Consequently, the map
\[
C^*(X;\H_0)/G(X;\H_0) \to C^*_{q}(X;\H_0)/G_{q}(X;\H_0)
\]
induced by the inclusion $C^*(X;\H_0) \hookrightarrow C^*_q(X;\H_0)$ has dense image, which implies that it is surjective. This concludes the proof.
\end{proof}

\section{Rank profiles}\label{sec:rank profiles}

In this section, we introduce the notion of rank function for operators in the quasi-local algebras and aim to prove Theorem~\ref{thm:main result K-theory}.

Throughout this section, let $\{X_n\}$ be a sequence of finite graphs with bounded valency, and $X$ be their coarse disjoint union. For each $n\in \NN$, set $d_n:=|X_n|$. The block-diagonal cutting map
\[
C^*_{uq}(X) \longrightarrow \prod_n \M_{d_n}(\CC) \big/ \bigoplus_n \M_{d_n}(\CC), \quad T \mapsto [(\chi_{X_n}T\chi_{X_n})_n]
\]
is a $\ast$-homomorphism by \cite[Lemma 3.14]{BCZ23}. Hence it induces the following map
\[
\rank: K_0(C^*_{uq}(X)) \longrightarrow K_0\left( \prod_n \M_{d_n}(\CC) \big/ \bigoplus_n \M_{d_n}(\CC) \right) \hookrightarrow \prod_n \ZZ \big/ \bigoplus_n \ZZ,
\]
where the last inclusion is given by the six-term exact sequence in $K$-theory with the fact that $K_1(\bigoplus_n \M_{d_n}(\CC)) = 0$ and $K_0(\prod_n \M_{d_n}(\CC))\hookrightarrow \prod_n \ZZ$ by taking block-ranks.
In particular, we have 
\[
\rank([(P_n)_{n\in \NN}]) = [(\rank(P_n))_{n\in \NN}]
\]
for block-diagonal projection $(P_n)_n \in C^*_{uq}(X)$. Here $\rank(\cdot)$ is the rank of the matrix.

In the following, we compute the image of ghost projections in the uniform Roe and quasi-local algebras under the $\rank$ function to distinguish their $K$-theories.

\subsection{The uniform Roe algebra case}\label{ssec:rank profile Roe case}

For ghost projections in the uniform Roe algebra, we have the following power saving result on their rank profiles.

\begin{prop}\label{prop:Roe ghost}
Let $\{X_n\}$ be a sequence of finite graphs with bounded valency and $\lim_{n\to \infty} |X_n| = \infty$, and $X$ be their coarse disjoint union. Assume there exists $\gamma>0$ such that $\girth(X_n)\geq\gamma\log |X_n|$ for sufficiently large $n$. Then for any $z\in K_0(G_u(X))$, there are $C<\infty$ and $\alpha>0$ such that $\rank((\iota\circ i_u)_\ast(z))$
has a representative $(s_n)_n \in \prod_n \ZZ$ satisfying
\[
 |s_n|\leq C |X_n|^{\,1-\alpha} \quad \text{for any} \quad n\in \NN.
\]
Here $i_u: G_u(X) \to C^*_u(X)$  and $\iota: C^*_u(X) \to C^*_{uq}(X)$ are the inclusion maps.
\end{prop}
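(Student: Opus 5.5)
The plan is to compute the rank profile as a trace, truncate that trace to a polynomial of logarithmic degree, and use the large girth to move the computation onto trees; the estimate on trees is the step I expect to be hardest.

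\textbf{Step 1: reduce to a trace.} I would write $z=[p]-[\I_m]$ with $p$ a projection in $\M_k(G_u(X)^+)$ whose scalar part is $\I_m$, so that $p-\I_m\in\M_k(G_u(X))$. Then $\rank((\iota\circ i_u)_\ast(z))$ is represented by $s_n:=\Tr(\chi_{X_n}p\chi_{X_n})-m|X_n|$, up to finitely many $n$ and a bounded correction coming from the cutting map. Choose a self-adjoint $a\in \M_k(\CC_u[X]^+)$ with $\ppg(a)\le R$, $a-\I_m$ of finite propagation, and $\|p-a\|<1/8$. The spectrum of $a$ then lies in $[-1/8,1/8]\cup[7/8,9/8]$, so $\chi_{[1/2,\infty)}(a)$ is a projection close to $p$. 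After cutting to blocks, its trace on each block differs from $\Tr(p_n)$ by $O(1)$.

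\textbf{Step 2: polynomial truncation.} Fix a polynomial $h_L$ of degree $L$ approximating $\chi_{[1/2,\infty)}$ uniformly on the two spectral intervals with error $\le e^{-cL}$; this is possible because the intervals are separated, with $c>0$ independent of $L$. For each $n$ take $L=L_n:=\lfloor\beta\log|X_n|\rfloor$, with $\beta>0$ small enough that $L_nR<\gamma\log|X_n|/2\le\girth(X_n)/2$. Then
\[
\left|\Tr\big(\chi_{X_n}\chi_{[1/2,\infty)}(a)\chi_{X_n}\big)-\Tr\big(\chi_{X_n}h_{L_n}(a)\chi_{X_n}\big)\right|\le k|X_n|e^{-cL_n}\le k|X_n|^{1-c\beta}.
\]
The next point is that $h_{L_n}(a)$ has propagation $\le L_nR$, which is below half the girth. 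So each diagonal entry $h_{L_n}(a)_{x,x}$ depends only on the restriction of $a$ to the ball $B(x,L_nR)$, and this ball is isometric to a ball in a tree of bounded valency.

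\textbf{Step 3: local estimate on trees (main obstacle).} I expect the hardest step to be showing
\[
\Big|\sum_{x\in X_n}\big(\tr\, h_{L_n}(a)_{x,x}-m\big)\Big|\le C|X_n|^{1-\alpha'}.
\]
Here I would use the Higson lifting technique from \cite{WY12}. Since balls of radius $\ll\girth(X_n)$ are trees, the finite-propagation operator $a$ lifts blockwise to an operator $\tilde a$ on a disjoint union of trees (local covers of the $X_n$), and $h_{L_n}(a)_{x,x}=h_{L_n}(\tilde a)_{\tilde x,\tilde x}$ for every lift $\tilde x$ of $x$. I would then show that $\tilde a$ is still within $1/8$ of a projection $\tilde p$ which, like $p$, is a ghost perturbation of $\I_m$. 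For this I would use that the ghost property of $p$ is detected by limit operators (Proposition~\ref{prop:ghost intersection of limit op}), and that limit operators of $a$ and $\tilde a$ coincide because the limit spaces are trees (Lemma~\ref{lem:limit spaces of large girth}). On trees, which have Property A and for which ghost projections in the Roe algebra are compact, the local traces $\tr\,\chi(\tilde a)_{\tilde x,\tilde x}-m$ should be summable with a uniform bound. The bound needs to be uniform over all $L$ at scale $\le$ girth.

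If that uniform summability is too optimistic, my fallback is a weaker target. I would aim only for the estimate that the number of $x\in X_n$ with $|\tr\,h_{L_n}(a)_{x,x}-m|>|X_n|^{-\alpha''}$ is $O(|X_n|^{1-\alpha''})$. This would follow from a quantitative version of ghostness along the sequence of truncations.

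\textbf{Step 4: conclusion.} Combining Steps 1--3 gives $|s_n|\le C|X_n|^{1-\alpha}$ with $\alpha=\min\{c\beta,\alpha'\}$, after enlarging $C$ to absorb the finitely many small $n$ and the bounded correction terms.
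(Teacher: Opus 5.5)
Steps 1 and 2 are fine in outline. It is legitimate to compute the rank profile as the trace of a degree-$L_n$ polynomial in a finite-propagation approximant with $L_nR<\girth(X_n)/2$, and to read its diagonal entries off on the universal covers $\widetilde X_n$. In Step 1, simply define $s_n:=\rank(\chi_{[1/2,\infty)}(a)_n)-m|X_n|$, which represents the class exactly. The claimed $O(1)$ gap between this and $\Tr(\chi_{X_n}p\chi_{X_n})-m|X_n|$ is not justified when $p$ is not block-diagonal.

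\textbf{The gap is in Step 3.} Knowing that $\tilde a$ is within $1/8$ of a projection $\tilde p$ with $\tilde p-\I_m$ ghost, or even norm-small, gives no per-site information. A norm perturbation of size $1/8$ can shift $\Tr_k(\chi(\tilde a)_{\tilde x,\tilde x})-m$ by a fixed positive constant at \emph{every} site, with signs that cancel globally. For example, rotating $\delta_x\otimes e_1$ into $\delta_y\otimes e_k$ by a small angle $\theta$ moves the local traces at $x$ and $y$ by $\mp\sin^2\theta$. So there is no reason for these local defects to be ``summable with a uniform bound.'' On the infinite tree $\widetilde X_n$ they are $\Gamma_n$-periodic, hence summable only if they all vanish. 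Over a fundamental domain, nothing bounds their $\ell^1$-norm independently of $n$.

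Your fallback fails for the same reason, and in addition ghostness comes with no rate. The tools you invoke also do not deliver what is needed. Limit operators only see $a$ modulo ghosts (Proposition~\ref{prop:ghost intersection of limit op}), so they cannot give norm control of the lift. And ``ghosts are compact on trees'' is not the relevant mechanism, since a nonzero $\Gamma_n$-equivariant operator on $\widetilde X_n$ is never compact.

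\textbf{What is missing} is two ingredients:
\begin{enumerate}
\item[(i)] the norm statement that the Willett--Yu lifting $\phi$ annihilates ghosts (\cite[Lemma 5.5]{WY12}, resting on uniform operator norm localisation for the trees $\widetilde X_n$);
\item[(ii)] a \emph{global, signed} trace argument in place of per-site control.
\end{enumerate}

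The paper writes $z=[p]-[q]$ with block-diagonal $p,q$ (Lemma~\ref{lem:K theory reduction}). It approximates the ghost $b=p-q$ by a self-adjoint $a$ with $\ppg(a)\le R$ and $\|a-b\|\le 1/8$. Then $a_n^2$ has at least $|r_n|$ eigenvalues $\ge(7/8)^2$, while (i) gives $\|\widetilde{a_n}\|\le 1/4$ for large $n$. The moment comparison
\[
|r_n|(7/8)^{2m_n}\le\Tr(a_n^{2m_n})=\sum_{x}\Tr_k\big((\widetilde{a_n}^{2m_n})_{\tilde x,\tilde x}\big)\le k|X_n|4^{-2m_n},
\]
with $2m_n\approx\girth(X_n)/R$, yields the power saving with no polynomial approximation needed.

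Within your own framework, the same two ingredients close the gap. Ingredient (i) gives $\|\tilde a_n-\I_m\|\le 1/8+o(1)$. Moreover, $\tau(T):=\sum_{x\in X_n}\Tr_k(T_{\tilde x,\tilde x})$ is a finite faithful trace on the von Neumann algebra $\B(\ell^2(\widetilde X_n)\otimes\CC^k)^{\Gamma_n}$, in which projections at distance $<1$ are unitarily equivalent. Hence $\tau(\chi(\tilde a_n))=\tau(\I_m)=m|X_n|$, and
\[
\big|\textstyle\sum_{x}\big(\Tr_k(h_{L_n}(\tilde a_n)_{\tilde x,\tilde x})-m\big)\big|\le k|X_n|e^{-cL_n}.
\]
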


To prove Proposition~\ref{prop:Roe ghost}, we need the lifting technique from \cite{WY12}. For each $n\in \NN$, let $\pi_n:\widetilde X_n\to X_n$ be the universal cover and $\Gamma_n:=\pi_1(X_n)$. The condition of large girth implies that for any $R>0$, there exists $N \in \NN$ such that for all $n \geq N$, the map $\pi_n$ is an isometry when restricted on any ball with radius $R$. 

According to \cite[Lemma 3.8]{WY12}, there exists a $\ast$-homomorphism
\[
\phi: \CC_u[X] \longrightarrow \prod_n \CC_u[\widetilde X_n]^{\Gamma_n}\big/ \bigoplus_n \CC_u[\widetilde X_n]^{\Gamma_n},
\]
where $\CC_u[\widetilde X_n]^{\Gamma_n}$ denotes the $\ast$-algebra of all finite propagation operators which are $\Gamma_n$-equivariant. More precisely, disregarding finitely many pieces, we consider a block-diagonal operator $T=(T_n)_n \in \CC_u[X]$ with $\ppg(T)=R$. The element $\phi(T)$ can be defined as follows: Let $N \in \NN$ such that for any $n \geq N$, the map $\pi_n$ is an isometry on any ball with radius $2R$. For such $n$, set $\widetilde T_n \in \CC_u[\widetilde X_n]^{\Gamma_n}$ by 
\[
(\widetilde T_n)_{x,y}:=
\begin{cases}
(T_n)_{\pi_n(x), \pi_n(y)}, & \text{if } d(x,y) \leq R;\\
0, & \text{otherwise}.
\end{cases}
\]
Then $\phi(T)$ is defined to be $[(0,\cdots,0,\widetilde T_N, \widetilde T_{N+1}, \widetilde T_{N+2}, \cdots)]$.

Furthermore, each $\widetilde X_n$ is a tree with bounded valency independent of $n$. It follows that the family $\{\widetilde X_n\}_n$ has uniform operator norm localisation property. Similar to \cite[Lemma 3.12]{WY12}, the map $\phi$ extends to a $\ast$-homomorphism
\begin{equation}\label{EQ:lifting homo}
\phi: C^*_u(X) \longrightarrow \prod_n C^*_u(\widetilde X_n)^{\Gamma_n}\big/ \bigoplus_n C^*_u(\widetilde X_n)^{\Gamma_n},
\end{equation}
where $C^*_u(\widetilde X_n)^{\Gamma_n}$ is the norm closure of $\CC_u[\widetilde X_n]^{\Gamma_n}$. 

We also need the following auxiliary result.

\begin{lem}\label{lem:K theory reduction}
For any $z \in K_0(G_u(X))$, there exist $k \in \NN$ and block-diagonal projections $p,q$ in $\M_k(G_u(X)^+)$ with $\pi_k(p) = \pi_k(q) \in \M_k(\CC)$ such that $z=[p] - [q]$. Here $\pi: G_u(X)^+ \to \CC$ is the map to the unit part. 
\end{lem}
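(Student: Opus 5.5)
The plan is to start from the standard picture of $K_0$ of a non-unital algebra and then use a conditional expectation onto block-diagonal operators to make the representing projections block-diagonal.

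\textbf{Standard representation.} Every $z \in K_0(G_u(X))$ can be written as $z=[p]-[q]$ with $p,q$ projections in $\M_k(G_u(X)^+)$ and $\pi_k(p)=\pi_k(q)$. After conjugating by a scalar unitary we may take $\pi_k(p)=\pi_k(q)=e$, a diagonal scalar projection. This is the usual description of $K_0$ via the unitization.

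\textbf{Compression to the block-diagonal part.} Let $E$ be the map $T\mapsto \SOT$-$\sum_n \chi_{X_n}T\chi_{X_n}$, applied entrywise on $\M_k(\B(\ell^2(X)))$ and acting as the identity on the unit. This is the compression by the block-diagonal projections, hence a unital completely positive contraction. It preserves $C^*_u(X)$, since it does not increase propagation. It also preserves ghostness.

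The key point is that $p-E(p)$ has small norm, in fact is a ghost in a compact-off-diagonal sense. Since $d(X_n,X_m)\to\infty$, any finite propagation operator has only finitely many nonzero off-diagonal blocks $\chi_{X_n}T\chi_{X_m}$ with $n\neq m$. So for $T\in C^*_u(X)$, the difference $T-E(T)$ is a norm limit of finite-rank operators supported on finitely many pieces, and is therefore compact. Concretely, approximate the ghost entries of $p$ by finite propagation operators. Off a large finite set $F=X_1\sqcup\dots\sqcup X_L$, the off-diagonal blocks of such an approximant vanish. Within $F$, we can modify.

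So write $p=p_F+p_{F^c}$ up to a small error. Here the cross terms between $F$ and $F^c$ are small, because the off-diagonal blocks $\chi_{X_n}p\chi_{X_m}$ with $n$ or $m>L$ tend to zero in norm. The entries of $\chi_{X_n}p\chi_{X_m}$ are bounded by the ghost bound, but norm smallness needs more: the finite propagation approximation gives that these blocks are exactly zero for the approximant when $n\ne m$ are large.

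\textbf{Projections that are block-diagonal up to a finite-rank perturbation.} Hence $E(p)$ is close to $p$ modulo compact (indeed finite-block) operators. Choose $L$ so that $\|p-(\chi_F p\chi_F + E_{>L}(p))\|<1/4$. The element $a=\chi_F p\chi_F + E_{>L}(p)$ is block-diagonal, where we treat $X_1\sqcup\dots\sqcup X_L$ as a single block, or simply absorb it into $X_1$ by regrouping the decomposition. It is self-adjoint and lies in $\M_k(G_u(X)^+)$ with unit part $e$. Functional calculus $\chi_{[1/2,\infty)}(a)$ then gives a projection $p'$ that is block-diagonal and homotopic to $p$, with unit part $e$. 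Do the same for $q$. Regrouping finitely many pieces does not change $K$-theory or the later rank computation up to a finite modification, so it is harmless.

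\textbf{Expected obstacle.} The main obstacle is justifying the norm smallness of the off-diagonal blocks beyond a finite stage, and keeping the perturbation inside the ghost ideal plus scalars. To handle the first, I would use finite-propagation approximants together with the sparseness of $X$. For the second, I would note that both $E$ and functional calculus preserve the ideal $G_u(X)$ modulo scalars.
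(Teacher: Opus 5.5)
Your argument works, but as written it proves a weaker statement than the lemma. The projection $\chi_{[1/2,\infty)}(a)$ is block-diagonal only for the coarsened decomposition $F\sqcup X_{L+1}\sqcup X_{L+2}\sqcup\cdots$. Its corner on $\ell^2(F)\otimes\CC^k$ need not be block-diagonal with respect to $X_1,\dots,X_L$, whereas the lemma (via the definition in Section~\ref{sec:preliminaries}) refers to the given pieces $X_n$. Your remark that regrouping is harmless concerns the later rank estimate, not the lemma itself.

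The missing step is short. Let $r$ be the rank of the $F$-corner $p'_F$ of your projection $p'$. Choose a diagonal projection $p''_F$ of rank $r$ on the finite-dimensional space $\ell^2(F)\otimes\CC^k$, onto the span of $r$ of the vectors $\delta_x\otimes e_i$ with $e_i$ the standard basis of $\CC^k$. Join $p'_F$ to $p''_F$ by $u_tp'_Fu_t^*$, using that the unitary group of that space is connected. All these changes are supported on $F\times F$, hence lie in $\M_k(G_u(X))$. So the path stays in $\M_k(G_u(X)^+)$ with unchanged scalar part, and $[p'']=[p']=[p]$.

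The ``main obstacle'' you flag is already settled by the compactness you derived. With $K:=p-E(p)$ compact, one has exactly $p-a=K-\chi_FK\chi_F$, and this tends to $0$ in norm as $L\to\infty$ because $\chi_F\to 1$ strongly. You should drop the phrase ``$p-E(p)$ has small norm'': it is compact, not small.

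\textbf{Comparison with the paper.} The paper compresses every block, $a=E(p')$, and uses \cite[Lemma 3.13]{BCZ23} to get $\|a_n^2-a_n\|\to 0$. It then takes spectral projections for large $n$ and arbitrary projections on the finitely many remaining blocks. The resulting $p$ differs from $p'$ only by a compact operator, so its class may shift by an element of the image of $K_0(\KK)\cong\ZZ$, generated by a rank-one projection. The paper cancels this shift by adding copies of such a projection, which enlarges $k$.

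Your route keeps the full finite corner, so the new projection is norm-close to the old one. This preserves the $K$-class exactly and avoids the six-term-sequence bookkeeping with the compacts. The price is the regrouping, which the homotopy above removes without changing $k$.
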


\begin{proof}
Take projections $p',q'\in \M_k(G_u(X)^+)$ with $\pi_k(p')=\pi_k(q')$ and $z=[p']-[q']$. Set $a:=\SOT\text{-}\sum_{n} a_n$, where $a_n:=\chi_{X_n} p' \chi_{X_n}$ for each $n$. By \cite[Lemma 3.13]{BCZ23}, $a-p'$ is compact, which implies that $a^2-a$ is also compact. So $\lim_{n\to \infty}\|a^2_n - a_n\| = 0$. Hence for sufficiently large $n$, $\frac{1}{2} \notin \sigma(a_n)$ and we set $p_n:=\chi_{(1/2, \|a\|]}(a_n)$. Taking arbitrary projections $p_n$ for the finitely many remaining blocks, we obtain a block-diagonal projection $p:=\SOT\text{-}\sum_{n} p_n \in \M_k(G_u(X)^+)$ such that $p'-p$ is compact and $\pi_k(p) = \pi_k(p')$. Similarly, we have a block-diagonal projection $q\in \M_k(G_u(X)^+)$ such that $q'-q$ is compact and $\pi_k(q) = \pi_k(q')$.

Since \(p-p'\) and \(q-q'\) are compact, there exist \(m,n\in\mathbb Z\) such that $[p']-[p]=m[e]$ and $[q']-[q]=n[e]$, where \(e\) is a rank-one projection supported in some \(X_{n_0}\). Note that \(e\in G_u(X)\) is block diagonal and $z=[p]-[q]+(m-n)[e]$. If \(m-n\ge0\), replace \(p\) by \(p\oplus e^{\oplus(m-n)}\). If \(m-n<0\), replace \(q\) by \(q\oplus e^{\oplus(n-m)}\). This fulfils the task.
\end{proof}

\begin{proof}[Proof of Proposition~\ref{prop:Roe ghost}]
For $z\in K_0(G_u(X))$, Lemma~\ref{lem:K theory reduction} provides block-diagonal projections $p,q \in \M_k(G_u(X)^+)$ with $\pi_k(p) = \pi_k(q) \in \M_k(\CC)$ and $z=[p] - [q]$. Writing $p=(p_n)_n$ and $q=(q_n)_n$, set $r_n:= \rank(p_n) - \rank(q_n)$. Then $\rank((\iota\circ i_u)_\ast(z))=[(r_n)_n]$. Let $b_n:=p_n - q_n$ for each $n\in \NN$, and $b=(b_n)_n$. Then $b \in \M_k(G_u(X))$.

If $r_n \geq 0$, linear algebra shows that $\dim(\ran(p_n) \cap \ker(q_n)) \geq r_n$. Moreover, for $\xi \in \ran(p_n) \cap \ker(q_n)$, we have $b_n \xi = \xi$. A similar result holds for $r_n < 0$ as well. Hence in either case, there exists a subspace $W_n \subseteq \ell^2(X_n) \otimes \CC^k$ with $\dim(W_n) \geq |r_n|$ and $\|b_n\xi\| = \|\xi\|$ for $\xi \in W_n$.

Choose a self-adjoint block-diagonal operator $a = (a_n)_n \in \M_k(\CC_u[X])$ such that $\|a-b\| \leq \frac{1}{8}$ and $\ppg(a)\leq R$ for some $R>0$. For each $n\in \NN$ and $\xi \in W_n$, we have
\[
\|a_n \xi\| \geq \|b_n \xi\| - \|(a_n - b_n) \xi\| = \|\xi\| - \frac{1}{8} \|\xi\| = \frac{7}{8} \|\xi\|,
\]
which implies that $\langle a_n^2\xi, \xi \rangle \geq \left( \frac{7}{8} \right)^2 \|\xi\|^2$.
Hence $a_n^2$ has at least $|r_n|$ eigenvalues at least $\left( \frac{7}{8} \right)^2$. Therefore for each $m,n \in \NN$, we have
\begin{equation}\label{EQ:estimate 1}
|r_n|\cdot \left( \frac{7}{8} \right)^{2m} \leq \Tr_{kd_n}(a_n^{2m}),
\end{equation}
where $\Tr_{kd_n}$ denotes the canonical trace on the matrix algebra $\M_k(\B(\ell^2(X_n)))$.

Now $\phi$ in \eqref{EQ:lifting homo} induces a map
\[
\phi_k: \M_k(C^*_u(X)) \longrightarrow \prod_n \M_k(C^*_u(\widetilde X_n)^{\Gamma_n})\big/ \bigoplus_n \M_k(C^*_u(\widetilde X_n)^{\Gamma_n}).
\]
Take a representative $(\widetilde{a_n})_n$ for $\phi_k(a)$ (\emph{i.e.}, $\phi_k(a) = [(\widetilde{a_n})_n]$) such that $(\widetilde{a_n})_n$ and $a$ have the same propagation. As in \cite[Lemma 5.5]{WY12}, $\phi_k$ annihilates ghosts. Hence
\[
\|\phi_k(a)\| = \|\phi_k(a-b)\| \leq \|a-b\| \leq \frac{1}{8}.
\]
Therefore, there is $N \in \NN$ such that for all $n > N$, we have $\|\widetilde{a_n}\| \leq \frac{1}{4}$. 

Increasing \(N\) if necessary, we assume $\girth(X_n) > 4R$ for all $n>N$ and set 
\[
m_n:=\lfloor \frac{\girth(X_n)}{2R} \rfloor - 1.
\]
For each $x\in X_n$, choose a lift $\tilde x \in \widetilde X_n$. Then the choice of $m_n$ ensures that for each $x\in X_n$, we have
\[
(a^{2m_n}_n)_{x,x} = (\widetilde{a_n}^{2m_n})_{\tilde x,\tilde x}
\]
This implies that for $n>N$,
\begin{equation}\label{EQ:estimate 2}
\Tr_{kd_n}(a_n^{2m_n}) = \sum_{x\in X_n} \Tr_k((\widetilde{a_n}^{2m_n})_{\tilde x,\tilde x}) \leq kd_n\cdot \|\widetilde{a_n}\|^{2m_n} \leq kd_n\cdot \left( \frac{1}{4} \right)^{2m_n}.
\end{equation}
Combining \eqref{EQ:estimate 1} and \eqref{EQ:estimate 2}, we obtain that for such $n$ we have
\[
|r_n| \leq kd_n\cdot \left(\frac{2}{7}\right)^{2m_n}.
\]

On the other hand, by the choice of $m_n$ we have
\[
2m_n \geq \frac{\girth(X_n)}{R} -4.
\]
Enlarging $N$ again if necessary, the assumption provides $\girth(X_n)\geq\gamma\log d_n$ for $n>N$.
Combining them together, we have:
\[
|r_n| \leq kd_n\cdot \left(\frac{2}{7}\right)^{\frac{\gamma\log d_n}{R} -4} = Cd_n^{1-\alpha}
\]
for $C=k\left(\frac{2}{7}\right)^{-4}$ and $\alpha = \frac{\gamma}{R} \log \frac{7}{2}$ for $n>N$. Enlarging \(C\) to absorb the finitely many indices, we conclude the proof.
\end{proof}

\subsection{The uniform quasi-local algebra case}\label{ssec:rank profile quasi-local case}

Now we would like to construct a ghost projection in the uniform quasi-local algebra whose rank profile violates the control in Proposition~\ref{prop:Roe ghost}. The idea follows from Ozawa's recent work \cite{Oza25} using random subspaces (see also \cite{BSV26}). The following is a modified version of \cite[Lemma 4]{Oza25} with more refined information on dimension.

\begin{prop}\label{prop:ql subspace}
There is a universal constant $C>0$ such that for every $d,r \in \NN$ with $1 \leq r \leq d$, there is an $r$-dimensional subspace $V\subset\mathbb C^d$ such that the associated orthogonal projection $P$ onto $V$ satisfies the following: for every non-empty $E\subseteq \{1,2,\cdots,d\}$, we have
\[
 \|P\chi_E\|\le C\sqrt{\frac{r}{d}+
       \frac{|E|}{d}\log\frac{ed}{|E|}}.
\]
\end{prop}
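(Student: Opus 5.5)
We will take $V$ to be a uniformly random $r$-dimensional subspace of $\CC^d$, realised as the range of $G^*$ for a $r\times d$ matrix $G$ with i.i.d.\ standard complex Gaussian entries. We then show that, with positive probability, the required bound holds simultaneously for all non-empty $E$. The plan follows \cite[Lemma 4]{Oza25}, but keeps track of $r$ instead of fixing a single threshold $\delta$.

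\textbf{Reduction to a Gaussian matrix.} Write $G=(g_1,\dots,g_d)$ with columns $g_i\in\CC^r$. Then $P=G^*(GG^*)^{-1}G$, so
\[
\|P\chi_E\| = \|(GG^*)^{-1/2}G\chi_E\| \le \|(GG^*)^{-1/2}\|\cdot\|G_E\|,
\]
where $G_E$ is the $r\times|E|$ submatrix formed by the columns indexed by $E$. Standard nonasymptotic bounds for Wishart/Gaussian matrices (Davidson--Szarek) give $s_{\min}(G)\ge \sqrt d-\sqrt r - t$ with probability at least $1-e^{-t^2/2}$.

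\textbf{Lower bound on $s_{\min}(G)$.} If $r\le d/4$, take $t=\sqrt d/4$. This gives $\|(GG^*)^{-1/2}\|\lesssim d^{-1/2}$ with probability at least $1-e^{-d/32}$. If $r>d/4$, the claimed bound has right-hand side at least $C/2\ge 1$ for $C\geq 2$, so the statement is trivial and any $V$ works. Hence we may assume $r\le d/4$.

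\textbf{Upper bound on $\|G_E\|$, uniformly in $E$.} For fixed $E$ with $|E|=k$, Gaussian concentration for the Lipschitz function $\|\cdot\|$ gives
\[
\|G_E\|\le \sqrt r+\sqrt k+t \quad\text{with probability at least } 1-e^{-t^2}.
\]
Take $t=\sqrt{2k\log(ed/k)}+\sqrt{d}/8$. A union bound over the $\binom dk\le (ed/k)^k$ subsets of size $k$, and then over $k=1,\dots,d$, has total failure probability at most
\[
\sum_k e^{-k\log(ed/k)}e^{-d/64}\le d\,e^{-d/64}\cdot(\text{const}).
\]
This is the main obstacle. This bare choice makes the final bound carry an additive $\sqrt d/8$, which is too large: after dividing by $\sqrt d$ it yields a constant rather than the claimed $\sqrt{r/d}$ term.

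To fix this, I would instead use the term $e^{-k\log(ed/k)}$ alone to beat the entropy, taking $t=\sqrt{3k\log(ed/k)}$. The failure probability for size $k$ is then at most $(ed/k)^{-2k}$, and summing over $k$ gives at most $\sum_k (k/ed)^{2k}$. This sum is bounded by roughly $e^{-2}\cdot$const, and for large $d$ it is dominated by the $k=1$ term, namely $(ed)^{-2}$, which is small. For small $d$, where the probability may not be below $1$, enlarge $C$ so that the statement is trivial. One must check this sum is below, say, $1/2$ for $d$ beyond an absolute constant.

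\textbf{Conclusion.} On the intersection of the two good events, we obtain
\[
\|P\chi_E\|\lesssim \frac{\sqrt r+\sqrt k+\sqrt{3k\log(ed/k)}}{\sqrt d}\lesssim\sqrt{\frac rd+\frac kd\log\frac{ed}{k}}.
\]
Here we use $\log(ed/k)\ge1$ to absorb the $\sqrt k$ term. This intersection has positive probability, so a suitable $V$ exists. The complex case is handled either directly, or by realifying, since complex Gaussians satisfy the same concentration up to constants.
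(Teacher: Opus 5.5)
Your argument is correct up to routine constant bookkeeping, but it takes a genuinely different route from the paper.

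\textbf{The paper's route.} The paper samples a Haar-random isometry $U:\CC^r\to\CC^d$, so there is no denominator to control. For a fixed unit vector $v\in\CC^r$ it bounds $\|\chi_EUv\|$ by L\'evy's concentration on the sphere: the mean is at most $\sqrt{k/d}$ with $k=|E|$, followed by a median--mean comparison. It then union-bounds over a $\frac12$-net of the unit sphere of $\CC^r$ of size $5^{2r}$ and over the $\binom{d}{k}$ subsets. The $r/d$ term is exactly what pays for the entropy of that net, and one gets $C=32$ without any reduction on $r/d$.

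\textbf{Your route.} You split the problem into two parts.
\begin{itemize}
\item A denominator $s_{\min}(G)$, controlled once (not uniformly in $E$) by Davidson--Szarek after the harmless reduction to $r\le d/4$.
\item A numerator $\|G_E\|$, where the supremum over $\CC^r$ is absorbed into the Gordon--Chevet bound $\mathbb{E}\|G_E\|\le\sqrt r+\sqrt k$.
\end{itemize}
Consequently no net is needed, the union bound runs over subsets only, and $\sqrt{r/d}$ appears as the typical norm of an $r\times k$ Gaussian block divided by $\sqrt d$.

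\textbf{Comparison.} Your approach relies on the Slepian--Gordon comparison machinery and needs a case split. The paper's approach is more elementary and self-contained, using only sphere concentration and nets.

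\textbf{Two small points.}
\begin{itemize}
\item Your small-$d$ worry about the union bound is unnecessary. Since $k\le d$ gives $(k/(ed))^{2k}\le e^{-2k}$, the total failure probability is below $0.16$ for every $d$. Only the $e^{-d/32}$ term forces you to dispose of $d\le 5$ trivially, and that works because $\sqrt{r/d}\ge d^{-1/2}$.
\item ``Realifying'' a complex Gaussian matrix does not produce an i.i.d.\ real one. The clean fix is to take $G$ real and use $V=\mathrm{Ran}(G^*)\otimes\CC$, since operator norms of real matrices are unchanged by complexification (as in Ozawa's lemma). In that case the tail for $\|G_E\|$ is $e^{-t^2/2}$, so the constant $3$ in your $t$ should become $6$. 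Your $e^{-t^2}$ is right as stated for complex Gaussians normalised with $\mathbb{E}|g_{ij}|^2=1$.
\end{itemize}
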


\begin{proof}
Fix $d,r\in \NN$ with $1 \leq r \leq d$. Set
\[
 U(d):=\{W\in \M_d(\CC): W^*W=WW^*=\I_d\}
\]
with the Haar probability measure. Choose $W\in U(d)$ with this distribution and let
$U:\mathbb C^r \to \mathbb C^d$ be its first $r$ columns. Then $U^*U=I_r$, and the range of $U$ is a random
$r$-dimensional subspace and its range projection is $P=UU^*$.

Fixing a unit vector $v\in\mathbb C^r$, the vector $Uv$ is uniform on the unit sphere of $\mathbb C^d$. Also fix $E \subseteq \{1,2,\cdots,d\}$ with $|E|=k$. By coordinate symmetry and Jensen's inequality, we obtain
\begin{equation}\label{EQ:estimate on E}
\mathbb{E} (\|\chi_E Uv\|) \leq \sqrt{\mathbb{E} (\|\chi_E Uv\|^2)} = \sqrt{\frac{k}{d}}.
\end{equation}
Consider the function $f_E: x\mapsto\|\chi_Ex\|$ on the unit sphere of $\CC^d$, identified with the real sphere in $\mathbb R^{2d}$. This function is $1$-Lipschitz and hence \cite[Proposition 14.3.3]{MM02} implies that the median $\med(f_E)$ of $f_E$ satisfies:
\[
|\med(f_E) - \mathbb{E}(f_E)| \leq \frac{12}{\sqrt{2d}}.
\]
Combining with \eqref{EQ:estimate on E}, we obtain
\begin{equation}\label{EQ:median}
\med(f_E) \leq \sqrt{\frac{k}{d}} + \frac{12}{\sqrt{2d}}.
\end{equation}
Applying the measure concentration phenomenon (Levy's lemma, see \cite[Proposition 14.3.2]{MM02}) to $f_E$, we have
\[
\mathbb{P}\left( f_E > \med(f_E) + t \right) \leq 2 e^{-dt^2}
\]
for any $t\in [0,1]$. Combining with \eqref{EQ:median}, we obtain
\begin{equation}\label{EQ:single bad event prob}
\mathbb{P}\left( \|\chi_EUv\|>\sqrt{\frac{k}{d}} + \frac{12}{\sqrt{2d}} + t \right) \leq 2e^{-dt^2}.
\end{equation}

Choose a $\frac{1}{2}$-net $\N$ of the unit sphere of $\CC^r$ with $|\N| \leq 5^{2r}$.
For $k\in \{1,2,\cdots, d\}$, set
\[
t_k:=3\sqrt{\frac rd+\frac{k}{d}\log\frac{ed}{k}}.
\]
If $t_k\leq 1$, taking $t=t_k$ in \eqref{EQ:single bad event prob} and taking the union bound over all $v\in\mathcal N$ and all subsets $E$ of $\{1,2,\cdots,d\}$ with cardinality $k$, we obtain:
\begin{align*}
\mathbb{P}&\left( ~\exists~ E \subseteq \{1,\cdots,d\} \text{ with }|E|=k, \exists~ v\in \N \text{ s.t. } \|\chi_EUv\|>\sqrt{\frac{k}{d}} + \frac{12}{\sqrt{2d}} + t_k\right)\\
& \leq 2 |\N| \cdot  \binom{d}{k} \cdot e^{-dt_k^2} \leq 2\cdot 5^{2r} \cdot \left( \frac{ed}{k} \right)^k\cdot e^{-dt_k^2} = 2\cdot 5^{2r} \cdot \left( \frac{ed}{k} \right)^k\cdot e^{-9\left(r+k\log\frac{ed}{k}\right)} \\
&= 2\cdot e^{-(9-2\log 5)r} \cdot e^{-8k\log\frac{ed}{k}}  \leq 2 \cdot e^{-5r} \cdot e^{-8k},
\end{align*}
where we use the fact that $\binom{d}{k} \leq \left( \frac{ed}{k} \right)^k$ in the second inequality and $9-2\log 5 > 5$, $\log \frac{ed}{k} \geq 1$ in the last.
If $t_k > 1$, then $\|\chi_EUv\| \leq 1 <t_k$ and hence
\[
\|\chi_EUv\|\leq \sqrt{\frac{k}{d}} + \frac{12}{\sqrt{2d}} + t_k
\]
holds automatically. Thus, the corresponding bad event can never happen. Summing over $k \in \{1,2,\cdots,d\}$, we obtain:
\begin{align*}
\mathbb{P}&\left( ~\exists~ E \subseteq \{1,\cdots,d\}, \exists~ v\in \N \text{ s.t. } \|\chi_EUv\|>\sqrt{\frac{|E|}{d}} + \frac{12}{\sqrt{2d}} + t_{|E|}\right) \\
&\leq 2e^{-5r}\sum_{k=1}^{d}e^{-8k} \leq 2e^{-5r}\sum_{k=1}^{\infty}e^{-8k} = 2e^{-5r}\frac{e^{-8}}{1-e^{-8}} < 1.
\end{align*}

Consequently, there exists an isometry $U: \CC^r \to \CC^d$ such that for any non-empty $E \subseteq \{1,2,\cdots,d\}$ and any $v\in \N$, we have
\[
\|\chi_EUv\| \leq \sqrt{\frac{|E|}{d}} + \frac{12}{\sqrt{2d}} + 3\sqrt{\frac rd+\frac{|E|}{d}\log\frac{ed}{|E|}}.
\]
Since $\N$ is a $\frac{1}{2}$-net, we have
\begin{equation}\label{EQ:control for UE}
\|\chi_EU\| \leq 2\max_{v\in\N}\|\chi_EUv\| \leq 2\left(\sqrt{\frac{|E|}{d}} + \frac{12}{\sqrt{2d}} + 3\sqrt{\frac rd+\frac{|E|}{d}\log\frac{ed}{|E|}}\right).
\end{equation}
Setting
\[
S_E:=\sqrt{\frac rd+\frac{|E|}{d}\log\frac{ed}{|E|}},
\]
we have $\sqrt{\frac{|E|}{d}} \leq S_E$ and $\frac{12}{\sqrt{2d}} \leq 6\sqrt{2}S_E$. Hence \eqref{EQ:control for UE} shows
\[
\|\chi_EU\| \leq (8+12\sqrt{2})S_E \leq 32S_E.
\]
Therefore, we obtain
\[
\|P\chi_E\| = \|UU^* \chi_E\| \leq \|\chi_EU\| \leq 32 \sqrt{\frac rd+\frac{|E|}{d}\log\frac{ed}{|E|}},
\]
which concludes the proof.
\end{proof}

\begin{cor}\label{cor:ghost in ql}
Let $\{X_n\}$ be a sequence of expander graphs with $d_n:=|X_n|$, and take $r_n \in \NN$ with $1 \leq r_n \leq d_n$ and $\lim_{n\to \infty} r_n / d_n = 0$ (\emph{e.g.}, $r_n = \left\lfloor d_n/\log d_n\right\rfloor$ for large $n$). For each $n$, take a subspace $V_{n}\subseteq\ell^2(X_n)$ satisfying the result in Proposition~\ref{prop:ql subspace} for $d_n$ and $r_n$. Denote by $P_n$ the orthogonal projection in $\B(\ell^2(X_n))$ with range $V_{n}$,  and set $P:=\mathrm{(SOT)}\text{-}\sum_{n} P_{n}$.
Then $P$ is a ghost projection in $C^*_{uq}(X)$ with $\rank(P_n) = r_n$ for each $n$.
\end{cor}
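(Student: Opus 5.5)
Three things need checking: $P$ is a projection with $\rank(P_n)=r_n$, $P$ is a ghost, and $P$ is quasi-local. Local compactness is automatic since $\H_0=\CC$.

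\emph{Projection, rank and ghost.} The first point is immediate from the construction. For the ghost property, take $E=\{x\}$ in Proposition~\ref{prop:ql subspace}. For $x,y\in X_n$ this gives
\[
|(P_n)_{x,y}|\le\|P_n\chi_{\{y\}}\|\le C\sqrt{\frac{r_n}{d_n}+\frac{\log(ed_n)}{d_n}}.
\]
This tends to $0$ because $r_n/d_n\to0$ and $d_n\to\infty$ for an expander sequence. Only finitely many blocks then have entries of size $\ge\varepsilon$, so the exceptional set $F$ can be taken finite.

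\emph{Quasi-locality.} Fix $\varepsilon>0$ and $A,B\subseteq X$ with $d(A,B)>R$. Since $P$ is block-diagonal and the pieces are far apart, it suffices to bound $\|\chi_AP_n\chi_B\|$ uniformly in $n$ for $A,B\subseteq X_n$. We have $\|\chi_AP_n\chi_B\|\le\min(\|P_n\chi_A\|,\|P_n\chi_B\|)$, using $\|\chi_AP_n\|=\|P_n\chi_A\|$. So it is enough that one of $A,B$ has small relative size. Choose $\delta>0$ with $C\sqrt{2\delta\log(e/\delta)}<\varepsilon$, using that $t\log(e/t)$ is increasing on $(0,1]$. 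Also discard the finitely many $n$ with $r_n/d_n\ge\delta$; there each block is a finite-dimensional operator, so it contributes nothing for $R$ larger than $\operatorname{diam}X_n$. It then remains to find $R$ such that, in every $X_n$, any two subsets at distance $>R$ satisfy $\min(|A|,|B|)\le\delta d_n$.

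This is the key expander step. Let $h>0$ be the expansion constant, so that $|\partial_1 S|\ge h|S|$ whenever $|S|\le d_n/2$. Iterating, any $S$ with $|S|\ge\delta d_n$ satisfies $|\N_j(S)|>d_n/2$ once $j\ge j_0:=\lceil\log(1/(2\delta))/\log(1+h)\rceil$. Apply this to both $A$ and $B$ with $R=2j_0+1$. Then $\N_{j_0}(A)$ and $\N_{j_0}(B)$ each have more than half the points, so they intersect, which forces $d(A,B)\le 2j_0$, a contradiction. Hence $\min(|A|,|B|)<\delta d_n$, and say $|A|<\delta d_n$. Proposition~\ref{prop:ql subspace} then gives
\[
\|\chi_AP_n\chi_B\|\le C\sqrt{\delta+\delta\log(e/\delta)}<\varepsilon.
\]
If $A$ is empty there is nothing to prove. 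This makes $P$ $(\varepsilon,R)$-quasi-local. I expect this quasi-locality argument, which converts expansion into "separated sets have one small side", to be the main point. The rest is bookkeeping, and the example $r_n=\lfloor d_n/\log d_n\rfloor$ clearly satisfies $r_n/d_n\to0$.
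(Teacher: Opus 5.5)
Your proof is correct and follows the route the paper indicates. The paper omits the details: for quasi-locality it defers to Ozawa's proof of \cite[Theorem B]{Oza25} (expansion forces one of two far-apart sets to be small, and Proposition~\ref{prop:ql subspace} controls small sets), and for the ghost property it defers to the singleton estimate in Proposition~\ref{prop:Ozawa's Theorem B}; you have supplied those details. One small slip: the iteration only yields $|\N_{j_0}(S)|\ge d_n/2$, so use $j_0+1$ to get the strict inequality that forces $\N_{j_0+1}(A)\cap\N_{j_0+1}(B)\neq\emptyset$, and take $R$ also larger than the diameters of the finitely many discarded $X_n$.
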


\begin{proof}
It is sufficient to show that $P$ is ghost and quasi-local. By Proposition \ref{prop:ql subspace}, the proof of quasi-locality is similar to that for \cite[Theorem B]{Oza25}, while the proof that $P$ is a ghost is similar to that for Proposition~\ref{prop:Ozawa's Theorem B}. Hence we omit the details.
\end{proof}

\subsection{Proof of Theorem~\ref{thm:main result K-theory}}\label{ssec:proof of main thm III}

\begin{proof}[Proof of Theorem~\ref{thm:main result K-theory}]
Consider the following commutative diagram:
\[
\xymatrix{
0 \ar[r] & G_u(X) \ar[r]^{i_u} \ar[d]_{\iota_G} & C^*_u(X) \ar[r]^-{\pi_u} \ar[d]_{\iota} & C^*_u(X)/G_u(X) \ar[r] \ar_{\cong}[d] & 0 \\
0 \ar[r] & G_{uq}(X) \ar[r]_{i_{uq}} & C^*_{uq}(X) \ar[r]_-{\pi_{uq}} & C^*_{uq}(X)/G_{uq}(X) \ar[r] & 0.
}
\]
Without loss of generality, assume $d_n:=|X_n| \geq 2$ for each $n$, and take $r_n := \left\lfloor\frac{d_n}{\log d_n}\right\rfloor$. Then Corollary~\ref{cor:ghost in ql} provides a ghost projection $P$ in $C^*_{uq}(X)$. If $\iota_\ast$ were surjective, there would exist $y\in K_0(C^*_u(X))$ such that $\iota_\ast(y) = [P]$. Since $P$ is ghost, consider its $K$-class $[P]_G$ in $K_0(G_{uq}(X))$. Then $(i_{uq})_\ast([P]_G) = [P]$ and hence, $(\pi_{uq})_\ast([P]) = 0$. A diagram chase gives $[x] \in K_0(G_u(X))$ such that $(i_u)_\ast([x]) = [y]$, which implies
\[
\rank((\iota \circ i_u)_\ast([x])) = \rank([P]).
\]
Proposition~\ref{prop:Roe ghost} shows that the left hand side has a representative $[(s_n)_n]$ with constant $C$ and $\alpha>0$ such that $|s_n| \leq C d_n^{1-\alpha}$. On the other hand, Corollary~\ref{cor:ghost in ql} gives $s_n = \rank(P_n) = \left\lfloor\frac{d_n}{\log d_n}\right\rfloor$ for sufficiently large $n$. This leads to a contradiction since
\[
\left\lfloor\frac{d_n}{\log d_n}\right\rfloor \big/ d_n^{1-\alpha} \sim \frac{d_n^\alpha}{\log d_n} \to +\infty
\]
as $n\to \infty$. Therefore, we conclude the proof.
\end{proof}

\section{A sufficient condition on injectivity}\label{sec:injectivity}

In this section, the Hilbert space $\H_0$ is separable. The whole section is devoted to the proof of Theorem~\ref{thm:inj intro}, which is built on the following commutative diagram:
\[
\xymatrix{
0 \ar[r] & G(X;\H_0) \ar[r]^{i} \ar[d]_{\iota_G} & C^*(X;\H_0) \ar[r]^-{\pi} \ar[d]_{\iota} & C^*(X;\H_0)/G(X;\H_0) \ar[r] \ar_{\cong}[d] & 0 \\
0 \ar[r] & G_{q}(X;\H_0) \ar[r]_{i_{q}} & C^*_{q}(X;\H_0) \ar[r]_-{\pi_{q}} & C^*_{q}(X;\H_0)/G_{q}(X;\H_0) \ar[r] & 0.
}
\]
First, we apply Theorem~\ref{thm:quotient isom intro} to reduce the problem to ghost ideals.

\begin{lem}\label{lem:reduction to ghost ideals}
Let $\{X_n\}$ be a sequence of graphs with bounded valency and large girth, and $X$ be their coarse disjoint union. Then $\ker(\iota_\ast) = i_\ast(\ker (\iota_{G,\ast}))$.
\end{lem}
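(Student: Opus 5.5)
We prove the two inclusions by a diagram chase in the six-term exact sequences of the two short exact sequences in the diagram above. The key input is that the right vertical map is a $\ast$-isomorphism by Theorem~\ref{thm:quotient isom intro}. Naturality of the six-term sequence gives a commutative ladder
\[
\xymatrix{
K_1(C^*/G) \ar[r]^-{\partial} \ar[d]_{\cong} & K_0(G(X;\H_0)) \ar[r]^-{i_\ast} \ar[d]_{\iota_{G,\ast}} & K_0(C^*(X;\H_0)) \ar[r]^-{\pi_\ast} \ar[d]_{\iota_\ast} & K_0(C^*/G) \ar[d]_{\cong}\\
K_1(C^*_q/G_q) \ar[r]_-{\partial_q} & K_0(G_q(X;\H_0)) \ar[r]_-{i_{q,\ast}} & K_0(C^*_q(X;\H_0)) \ar[r]_-{\pi_{q,\ast}} & K_0(C^*_q/G_q)
}
\]
with exact rows. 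Here $C^*/G$ abbreviates $C^*(X;\H_0)/G(X;\H_0)$, and similarly for $C^*_q/G_q$. The first and last vertical maps are isomorphisms induced by Theorem~\ref{thm:quotient isom intro}.

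\emph{The inclusion $\supseteq$.} Let $w\in\ker(\iota_{G,\ast})$. Commutativity of the middle square gives
\[
\iota_\ast(i_\ast w)=i_{q,\ast}(\iota_{G,\ast}w)=0,
\]
so $i_\ast w\in\ker(\iota_\ast)$.

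\emph{The inclusion $\subseteq$.} Let $y\in\ker(\iota_\ast)$.
\begin{enumerate}
\item Commutativity of the right square gives that the image of $\pi_\ast y$ in $K_0(C^*_q/G_q)$ equals $\pi_{q,\ast}\iota_\ast y=0$.
\item Since the right vertical map is an isomorphism, $\pi_\ast y=0$.
\item Exactness of the top row gives $w\in K_0(G(X;\H_0))$ with $i_\ast w=y$.
\item Set $w'=\iota_{G,\ast}w$. Then $i_{q,\ast}w'=\iota_\ast y=0$, so exactness of the bottom row gives $v\in K_1(C^*_q/G_q)$ with $\partial_q v=w'$.
\item Since the left vertical map is an isomorphism, $v$ is the image of a unique $u\in K_1(C^*/G)$.
\item Naturality of the boundary map gives $\iota_{G,\ast}(\partial u)=\partial_q v=w'$.
\item Replace $w$ by $w-\partial u$. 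Then $i_\ast(w-\partial u)=y$, because $i_\ast\circ\partial=0$ by exactness. Also $\iota_{G,\ast}(w-\partial u)=w'-w'=0$.
\end{enumerate}
Hence $y\in i_\ast(\ker\iota_{G,\ast})$.

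This is the four-lemma style argument. The only nontrivial inputs are Theorem~\ref{thm:quotient isom intro} and the naturality of the index (boundary) map $\partial$ with respect to the morphism of extensions. The main point to check is that the diagram of short exact sequences genuinely commutes, so that naturality applies. This holds because $G(X;\H_0)=C^*(X;\H_0)\cap G_q(X;\H_0)$, so $\iota$ maps $G(X;\H_0)$ into $G_q(X;\H_0)$ and the induced quotient map is exactly the isomorphism of Theorem~\ref{thm:quotient isom intro}. No step is expected to be a serious obstacle.
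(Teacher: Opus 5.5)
Your proposal is correct and is essentially the paper's own proof: the same chase, using that the quotient map is an isomorphism to lift $y$ to $K_0(G(X;\H_0))$, then correcting the lift by $\partial u$ via naturality of the boundary map. The paper writes it for $K_\ast$ in both degrees at once; your $K_0$ argument carries over verbatim to $K_1$.
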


\begin{proof}
First, it is clear that $i_\ast(\ker (\iota_{G,\ast})) \subseteq \ker(\iota_\ast)$. Conversely, let $y\in K_\ast(C^*(X;\H_0))$ with $\iota_\ast(y) = 0$. Since the quotients are isomorphic, we obtain $\pi_\ast(y) = 0$, which implies that $y=i_\ast(x)$ for some $x\in K_\ast(G(X;\H_0))$. Also we have $\iota_{G,\ast}(x) \in \ker(i_{q,\ast})$, which implies that $\iota_{G,\ast}(x) = \partial_q(z)$ for some $z\in K_{\ast+1}(C^*_{q}(X;\H_0)/G_{q}(X;\H_0))$. Here $\partial_q$ denotes the boundary map on the quasi-local algebra level. The isomorphism of the quotient algebras identifies $z$ with some $z' \in K_{\ast+1}(C^*(X;\H_0)/G(X;\H_0))$. Hence $\partial_q(z) = \iota_{G,\ast}(\partial(z'))$ for the boundary map $\partial$ on the Roe algebra level. Letting $x':=x-\partial(z')$, we obtain $x' \in \ker(\iota_{G,\ast})$ and $y=i_\ast(x')$. 
\end{proof}

We also need the following, which essentially comes from \cite[Lemma 33]{Fin14}. Denote by $\mathfrak{K}(\ell^2(X;\H_0))$ the algebra of compact operators.

\begin{lem}\label{lem:K0 inj on compacts}
Let $\{X_n\}$ be a sequence of graphs with bounded valency, and $X$ be their coarse disjoint union. Then the map
\[
K_0(\mathfrak{K}(\ell^2(X;\H_0))) \longrightarrow K_0(C^*_{q}(X;\H_0)),
\]
induced by the inclusion $\mathfrak{K}(\ell^2(X;\H_0)) \hookrightarrow C^*_{q}(X;\H_0)$, is injective.
\end{lem}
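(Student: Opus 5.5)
We need to show that $K_0(\mathfrak{K}(\ell^2(X;\H_0)))\cong\ZZ$, generated by a rank-one projection $e$, injects into $K_0(C^*_q(X;\H_0))$. Equivalently, $k[e]\neq 0$ in $K_0(C^*_q(X;\H_0))$ for every non-zero integer $k$. The plan is to build a homomorphism out of $C^*_q(X;\H_0)$ that detects $[e]$ faithfully. This follows the idea of \cite[Lemma 33]{Fin14}: use the block-diagonal structure together with a ``shift to infinity'' trick.

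The first choice is a trace-type map along the blocks. By \cite[Lemma 3.14]{BCZ23} (the coefficient version), the block cutting $T\mapsto[(\chi_{X_n}T\chi_{X_n})_n]$ is a $*$-homomorphism
\[
\Psi: C^*_q(X;\H_0)\to \prod_n \B(\ell^2(X_n;\H_0))\big/\bigoplus_n \B(\ell^2(X_n;\H_0)).
\]
However, this map kills compact operators, so it cannot detect $[e]$. I would therefore use it only in the second step.

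The main argument is different. Suppose $k[e]=0$ in $K_0(C^*_q(X;\H_0))$ with $k>0$. Since $C^*_q(X;\H_0)$ is in general non-unital, I would choose a concrete version of this vanishing. After passing to matrices and the unitization, there are a projection $f$ and a partial isometry $v$ in $\M_m(C^*_q(X;\H_0)^+)$ with $v^*v=e^{\oplus k}\oplus f$ and $vv^*=f$. Localising, I would cut $v$ down to a block-diagonal approximant, which is legitimate modulo compacts by \cite[Lemma 3.13]{BCZ23}. This produces finite-rank projections on finitely many blocks $X_1,\dots,X_N$ that realise a Murray--von Neumann equivalence between $e^{\oplus k}\oplus f_{\le N}$ and a subprojection of the corresponding matrix algebra over $\B(\ell^2(\bigsqcup_{n\le N}X_n;\H_0))$.

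The key observation is that $\chi_{X_{\le N}}C^*_q\chi_{X_{\le N}}$ sits inside the compacts when $\H_0$ is finite-dimensional. When $\H_0$ is infinite-dimensional, it sits inside $\mathfrak{K}$ modulo local compactness. On compacts, $K_0$ is detected by the trace (rank), and rank is additive. Hence $k+\rank(f_{\le N})=\rank(f_{\le N})$, which is a contradiction provided the ranks are finite. Local compactness is what ensures finite rank after a small perturbation, by the usual functional calculus argument within $\M_m(\mathfrak{K})$.

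The main obstacle is making the localisation honest. The partial isometry $v$ is only quasi-local, not block-diagonal, so cutting it to finitely many blocks does not give an exact partial isometry. To handle this, I would use the fact that the off-diagonal part between $X_{\le N}$ and $X_{>N}$ has norm tending to $0$ as $N\to\infty$. This holds because $d(X_{\le N},X_{>N})\to\infty$ and $v$ is quasi-local. Then $\chi_{\le N}v\chi_{\le N}$ is an almost partial isometry whose source and range are close to projections. Standard perturbation turns these into genuine projections differing in $K_0(\mathfrak{K})$ by $k[e]$, and the rank comparison then yields the contradiction.
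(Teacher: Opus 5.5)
\textbf{There is a genuine gap in the counting step.} Your localisation idea is sound: since $d(X_{\le N},X_{>N})\to\infty$ and the non-scalar part of $v$ is quasi-local, $\chi_{X_{\le N}}v\chi_{X_{\le N}}$ is an almost partial isometry. The contradiction step, however, fails in the main case, where $\H_0$ is infinite-dimensional.

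Because $C^*_q(X;\H_0)$ is non-unital, your witness $v,f$ lives in $\M_m(C^*_q(X;\H_0)^+)$. In general $f$ has a non-zero scalar part; the standard normalisation is $f=1_n$. That scalar part is not locally compact. So $\chi_{X_{\le N}}f\chi_{X_{\le N}}$ has infinite rank on $\ell^2(X_{\le N};\H_0)^{\oplus m}$, and no small perturbation makes it finite rank. Local compactness only controls the $C^*_q$-part of $v$ and $f$, not the scalar part. Your identity $k+\rank(f_{\le N})=\rank(f_{\le N})$ then reads $k+\infty=\infty$ and gives no contradiction. For finite-dimensional $\H_0$ the rank count does work, since $\ell^2(X_{\le N};\H_0)$ is then finite-dimensional.

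Separately, cutting $v$ to a block-diagonal approximant only modulo compacts via \cite[Lemma 3.13]{BCZ23} discards exactly the class $k[e]$ you want to detect. Only the sharp truncation to $X_{\le N}$ in your last paragraph is usable.

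\textbf{The fix.} What is missing is an invariant that accounts for the scalar part. Write $v=\lambda+a$ with $\lambda\in\M_m(\CC)$ scalar and $a\in\M_m(C^*_q(X;\H_0))$, and take $e$ supported in $X_1$. Then $\chi_{X_{\le N}}v\chi_{X_{\le N}}=\lambda+\chi_{X_{\le N}}a\chi_{X_{\le N}}$ lies in $\M_m(B_N)$, where $B_N:=\KK(\ell^2(X_{\le N};\H_0))+\CC 1$. This holds because $X_{\le N}$ is finite and $a$ is locally compact. Carrying out your perturbation inside $\M_m(B_N)$ gives $k[e]=0$ in $K_0(B_N)$. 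That is impossible, because $K_0(\KK(\ell^2(X_{\le N};\H_0)))\to K_0(B_N)$ is split injective.

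Equivalently, normalise to $v^*v=e^{\oplus k}\oplus 1_n$, $vv^*=0\oplus 1_n$ with $\lambda=0\oplus 1_n$. The truncation is then a compact perturbation of an operator of Fredholm index $k$, yet it is nearly unitary, so its index is $0$. With this repair your argument becomes a valid direct alternative to the paper's.

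\textbf{Comparison with the paper.} The paper never manipulates witnesses. It sets $D:=C^*_q(X;\H_0)\cap\prod_n\B(\ell^2(X_n;\H_0))$ and $J:=\bigoplus_n\KK(\ell^2(X_n;\H_0))$. It uses $D+\KK(\ell^2(X;\H_0))=C^*_q(X;\H_0)$ from \cite[Lemma 3.14]{BCZ23} to identify $D/J$ with $C^*_q(X;\H_0)/\KK(\ell^2(X;\H_0))$. It then shows $K_0(J)\to K_0(D)$ is injective through the evaluations $\mathrm{ev}_n\colon D\to\KK(\ell^2(X_n;\H_0))$; functoriality handles unitisations automatically there. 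Finally, naturality of the boundary map shows that $K_1(C^*_q(X;\H_0)/\KK(\ell^2(X;\H_0)))\to K_0(\KK(\ell^2(X;\H_0)))$ vanishes.
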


\begin{proof}
Denote
\[
D:=C^*_q(X;\H_0) \cap \prod_n \B(\ell^2(X_n;\H_0))
\]
and 
\[
J:=D \cap \KK(\ell^2(X;\H_0)) = \bigoplus_n \KK(\ell^2(X_n;\H_0)).
\]
Moreover, \cite[Lemma 3.14]{BCZ23} shows that $D + \KK(\ell^2(X;\H_0)) = C^*_q(X;\H_0)$ when $\H_0$ is infinite-dimensional, and the proof can also be applied when $\H_0$ is finite-dimensional. Hence, the inclusion of $D$ into $C^*_q(X;\H_0)$ induces the isomorphism
\begin{equation}\label{EQ:quotient isom diag}
D/J\cong C^*_q(X;\H_0)/\KK(\ell^2(X;\H_0)).
\end{equation}

Now we show that the map $K_0(J) \to K_0(D)$, induced by the inclusion $J \hookrightarrow D$, is injective. Note that $K_0(J) \cong \bigoplus_{n\in \NN} \ZZ$, given by the rank of projections. For each $n$, consider the coordinate evaluation $\mathrm{ev}_n: D \to \mathfrak{K}(\ell^2(X_n;\H_0))$. If $x=(x_n)_n \in K_0(J)$ maps to $0$ in $K_0(D)$, apply $(\mathrm{ev}_{n})_\ast$ for each $n$ and hence, $x_n = 0$. Therefore, $x=0$.

Finally, consider the following commutative diagram:
\[
\xymatrix{
0 \ar[r] &
J \ar[r] \ar@{^{(}->}[d] &
D \ar[r] \ar@{^{(}->}[d] &
D/J \ar[r] \ar[d]^{\cong} &
0 \\
0 \ar[r] &
\KK(\ell^2(X;\H_0)) \ar[r] &
C^*_q(X;\H_0) \ar[r] &
C^*_q(X;\H_0)/\KK(\ell^2(X;\H_0)) \ar[r] &
0 .
}
\]
The injectivity proved above implies that $\partial:K_1(D/J) \to K_0(J)$ is zero. By \eqref{EQ:quotient isom diag}, $\partial:K_1(C^*_q(X;\H_0)/\KK(\ell^2(X;\H_0))) \to K_0(\KK(\ell^2(X;\H_0)))$ is also zero, which shows the injectivity we need by exactness. 
\end{proof}

Finally, we finish the proof of Theorem~\ref{thm:inj intro}.

\begin{proof}[Proof of Theorem~\ref{thm:inj intro}]
By Lemma~\ref{lem:reduction to ghost ideals}, it suffices to show that $\ker (\iota_{G,\ast}) = 0$. Now \cite[Theorem 5.4]{WZ25} and \cite[Theorem 5.3]{WFZ25} show that $K_0(\mathfrak{K}(\ell^2(X;\H_0))) \cong K_0(G(X;\H_0))$ since $X$ can be coarsely embedded into some Hilbert space. Hence Lemma~\ref{lem:K0 inj on compacts} concludes that $\ker (\iota_{G,\ast}) = 0$.
\end{proof}

\appendix

\section{A groupoid proof for the uniform case of Theorem~\ref{thm:quotient isom intro}}\label{app:groupoid}

Here we give an alternative proof of Theorem~\ref{thm:quotient isom intro} when $\H_0 = \CC$ using groupoid quasi-local algebras introduced in \cite{JZZ23} (see \cite{JZZ23} for basic notions).

We only consider a locally compact Hausdorff \'{e}tale groupoid $\G$ with unit space $\Gz$, source map $s$ and range map $r$. Set $\G_x:=s^{-1}(\{x\})$ for $x\in \Gz$. 

The space $C_c(\G)$ of compactly supported continuous functions on $\G$ forms a $*$-algebra by convolution. 
Let $L^2(\G)$ be the Hilbert $C^*$-module over $C_0(\Gz)$ obtained by taking completion of $C_c(\G)$ with respect to the $C_0(\Gz)$-valued inner product:
\[
    \langle \eta,\xi \rangle(x)\coloneqq \sum_{\gamma\in \G_{x}}\overline{\eta(\gamma)}\xi(\gamma) \quad \text{for} \quad \eta, \xi \in C_c(\G),
\]
and the right $C_0(\Gz)$-module structure is given by $(\xi f)(\gamma)\coloneqq \xi(\gamma) f(s(\gamma))$. For each $x\in\Gz$, we have a left regular representation $\lambda_x: C_c(\G) \to \B(\ell^2(\G_x))$ defined by
\[
    \lambda_x(f)(\xi)(\gamma)=\sum_{\alpha\in \G_x}f(\gamma\alpha^{-1})\xi(\alpha), \quad \text{where } f\in C_{c}(\G) \text{ and }\xi\in\ell^{2}(\G_{x}),
\]
which can be formed into a single representation $\Lambda: C_c(\G) \to \L(L^2(\G))$.
Here $\L(L^2(\G))$ denotes all adjointable operators on $L^2(\G)$. The \emph{reduced groupoid $C^*$-algebra} $C^*_r(\G)$ is the norm closure of $C_c(\G)$ with the norm $\|\Lambda(f)\|$ for $f\in C_c(\G)$.

Recall that there is a useful slicing map introduced in \cite[Formula (3.1)]{JZZ23}:
\begin{equation}\label{EQ:slicing map}
    \Phi\colon \L(L^2(\G)) \longrightarrow \prod_{x\in\Gz}\B(\ell^2(\G_{x})) \quad \text{by} \quad \Phi(T) := (\Phi_x(T))_{x\in\Gz}.
\end{equation}
Note that for $T \in C^*_r(\G)$, we have $\Phi_x(T) = \lambda_x(T)$ for each $x\in \Gz$.

A notion of quasi-locality in the setting of groupoids is also introduced in \cite[Definition 4.2]{JZZ23}, which generalises the metric space case. More precisely, an operator $T\in \L(L^2(\G))$ is called \emph{quasi-local} if for any $\varepsilon>0$, there exists a compact subset $K\subset \G$ such that for any $f,g\in C_{b}(\G)$ with $(K\cdot \supp(f))\cap \supp(g)=\emptyset$ and $\supp(f)\cap (K\cdot \supp(g))=\emptyset$, we have $\|gTf\| < \varepsilon\|g\|_{\infty}\|f\|_{\infty}$. Denote the set of all quasi-local operators in $\L(L^2(\G))$ by $C^*_{uq}(\G)$, called the \emph{quasi-local algebra of $\G$}.

Recall that $T \in \L(L^2(\G))$ is \emph{$\G$-equivariant} if it commutes with the action of right convolution of $C_c(\G)$ (see \cite[Definition 3.21]{JZZ23}). Denote the set of all $\G$-equivariant quasi-local operators by $C^*_{uq}(\G)^{\G}$. \cite[Theorem E]{JZZ23} shows that if $\G$ is topologically amenable, then 
\begin{equation}\label{EQ:isom groupoid}
C^*_r(\G) \cong C^*_{uq}(\G)^{\G}.
\end{equation}

To apply the groupoid tool to our setting, we need the notion of coarse groupoid introduced in \cite{STY02} (see also \cite[Chapter 10]{R2003-book-a}). Given a discrete metric space $(X,d)$ of bounded geometry, as a topological space, the \emph{coarse groupoid} is 
\[
    G(X):=\bigcup_{r>0}{\overline{E_r}}^{\beta (X \times X)} \subseteq \beta (X \times X),
\]
where $E_r:=\{(x,y) \in X\times X: d(x,y) \leq r\}$. Also \cite{STY02} shows that $G(X)$ has a groupoid structure, which is locally compact Hausdorff and \'{e}tale with unit space $\beta X$.  
Denote by $\partial \beta X:=\beta X \setminus X$ the Stone-\v{C}ech boundary of $X$. 
Moreover, \cite[Proposition 10.29]{R2003-book-a} shows that there is a $\ast$-isomorphism from $C_c(G(X))$ to $\mathbb{C}_u[X]$, which extends to a $C^*$-isomorphism $C^*_r(G(X)) \cong C^*_u(X)$. On the other hand, it is shown in \cite[Example 4.9 and Section 6.3]{JZZ23} that $C^*_{uq}(G(X))^{G(X)} \cong C^*_{uq}(X)$.

The following result links the groupoid algebras to the quotient algebras. 

\begin{prop}\label{prop:quotient groupoid version}
Let $(X,d)$ be a discrete metric space of bounded geometry and $\G:=G(X)|_{\partial \beta X}=s^{-1}(\partial \beta X)$, where $s: G(X) \to \beta X$ is the source map. Then we have
\[
C^*_u(X)/G_u(X) \cong C^*_r(\G)
\]
and an embedding
\[
C^*_{uq}(X)/G_{uq}(X) \hookrightarrow  C^*_{uq}(\G)^{\G}.
\]
\end{prop}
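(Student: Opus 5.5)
For the first isomorphism, I would restrict to the closed invariant subset $\partial\beta X\subseteq\beta X$. The open invariant complement is $X$, and $G(X)|_X=X\times X$ is the pair groupoid, whose reduced $C^*$-algebra is $\KK(\ell^2(X))$. Since $G(X)$ is amenable only under Property A, I would not use the exact sequence $0\to C^*_r(G(X)|_X)\to C^*_r(G(X))\to C^*_r(\G)\to 0$ directly. Instead I would use the restriction homomorphism $\rho:C^*_r(G(X))\to C^*_r(\G)$, extending $f\mapsto f|_\G$ on $C_c$, which is always defined for a closed invariant subset and always surjective. Under $C^*_r(G(X))\cong C^*_u(X)$, the key claim is $\ker\rho=G_u(X)$.

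To identify the kernel, note that $\|\rho(T)\|=\sup_{\omega\in\partial\beta X}\|\lambda_\omega(T)\|$. By construction of $G(X)$, the source fibre $\G_\omega$ is identified with the limit space $X(\omega)$ via partial translations compatible with $\omega$. Moreover, $\lambda_\omega(T)$ is unitarily equivalent to the limit operator $\Phi_\omega(T)$ of \eqref{EQ:limit operator general} with $\H_0=\CC$; in that case the ultrapower $\K_\omega=\CC$ and the matrix entries are just $\lim_{x\to\omega}T_{t_\alpha(x),t_\beta(x)}$. One checks this on $C_c(G(X))=\CC_u[X]$ and extends by continuity. Proposition~\ref{prop:ghost intersection of limit op} then gives $\ker\rho=G_u(X)\cap\bigcap_\omega\ker\Phi_\omega$, hence $\ker\rho=G_u(X)$. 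This yields $C^*_u(X)/G_u(X)\cong C^*_r(\G)$.

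For the embedding, I would define $\Psi:C^*_{uq}(X)\to\L(L^2(\G))$ fibrewise by $T\mapsto(\Phi_\omega(T))_{\omega\in\partial\beta X}$, using the identification $\ell^2(\G_\omega)\cong\ell^2(X(\omega))$. Lemma~\ref{lem:hom of limit op} shows each $\Phi_\omega$ is a $*$-homomorphism. Equivariance should be automatic: the limit operators are compatible with changing base point within an orbit, since $\Phi_{t(\omega)}(T)$ is $\Phi_\omega(T)$ conjugated by the re-indexing. Quasi-locality in the groupoid sense should follow from the uniform $(2\varepsilon,R)$-quasi-locality in Lemma~\ref{lem:matrix boundedness}, taking $K=\overline{E_R}\cap\G$, which is compact.

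The main obstacle is showing that $\Psi(T)$ really is an adjointable operator on the Hilbert module $L^2(\G)$, rather than just a bounded family of fibrewise operators. This requires continuity in $\omega$ of the family. I would get it from the identification $C^*_{uq}(G(X))^{G(X)}\cong C^*_{uq}(X)$ of \cite{JZZ23}: view $T$ as an equivariant quasi-local operator on $L^2(G(X))$, and define $\Psi(T)$ as its restriction to the closed invariant set $\partial\beta X$, i.e.\ compression to $L^2(\G)=L^2(G(X))\otimes_{C(\beta X)}C(\partial\beta X)$. That restriction is automatically adjointable, equivariant, and quasi-local, and its fibres are exactly the $\Phi_\omega(T)$ computed via the slicing map \eqref{EQ:slicing map}. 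Finally, $\ker\Psi=\bigcap_\omega\ker\Phi_\omega=G_{uq}(X)$ by Proposition~\ref{prop:ghost intersection of limit op}. So $\Psi$ descends to an injective $*$-homomorphism $C^*_{uq}(X)/G_{uq}(X)\hookrightarrow C^*_{uq}(\G)^\G$, which is isometric by Corollary~\ref{cor:qnorm}.
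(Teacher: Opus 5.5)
Your proposal is correct. It differs from the paper mainly in the first isomorphism.

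\textbf{First isomorphism.} The paper simply quotes \cite[Example 4.5 and Proposition 4.9]{WZ25}. You instead prove it directly. The restriction map $C^*_r(G(X))\to C^*_r(\G)$ is a surjection that exists for any closed invariant subset, with no amenability needed, and you show its kernel is exactly $G_u(X)$. For this you use $\|\rho(T)\|=\sup_{\omega\in\partial\beta X}\|\lambda_\omega(T)\|$, the agreement of $\lambda_\omega$ with $\Phi_\omega$ on $\CC_u[X]$ (hence on $C^*_u(X)$ by continuity), and Proposition~\ref{prop:ghost intersection of limit op}.

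This buys a self-contained argument. Both halves of the proposition then run on the same limit-operator mechanism, and you rightly avoid the sequence through $\KK(\ell^2(X))$, which is not exact without Property A. The paper's citation is shorter, and the fibre identification from \cite{SW17} is used in both routes.

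One slip to fix. Writing $\ker\rho=G_u(X)\cap\bigcap_\omega\ker\Phi_\omega$ presupposes $\ker\rho\subseteq G_u(X)$. What you mean is $\ker\rho=C^*_u(X)\cap\bigcap_\omega\ker\Phi_\omega=C^*_u(X)\cap G_{uq}(X)=G_u(X)$.

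\textbf{Embedding.} Here you follow essentially the paper's route:
\begin{itemize}
\item identify $C^*_{uq}(X)$ with $C^*_{uq}(G(X))^{G(X)}$;
\item restrict to $\partial\beta X$;
\item recognise the slices as the limit operators $\Phi_\omega(T)$ via $s^{-1}(\omega)\cong X(\omega)$;
\item read off the kernel $G_{uq}(X)$ from Proposition~\ref{prop:ghost intersection of limit op}.
\end{itemize}
The paper gets the facts that the family $\{\Phi_\omega(T)\}_{\omega\in\partial\beta X}$ defines an element of $C^*_{uq}(\G)$, and that equivariance is inherited, from \cite[Corollary 4.6 and Lemma 3.23]{JZZ23}.

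Your tensor-product picture $T\otimes 1$ on $L^2(G(X))\otimes_{C(\beta X)}C(\partial\beta X)$ makes adjointability formal. Quasi-locality and equivariance of the restriction are not really automatic in that picture, though. What actually establishes them is your own fibrewise reasoning: the uniform $(2\varepsilon,R)$-estimate of Lemma~\ref{lem:matrix boundedness}, and the compatibility of limit operators along orbits. That is precisely what the cited results package.
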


\begin{proof}
The first isomorphism is a reformulation of \cite[Example 4.5 and Proposition 4.9]{WZ25}. Hence, we only focus on the second.

For each $\omega \in \beta X$, \cite[Lemma C.3]{SW17} shows that $s^{-1}(\omega)$ coincides with the limit space $X(\omega)$. Applying  \cite[Corollary 4.6]{JZZ23} to both $C^*_{uq}(G(X))$ and $C^*_{uq}(\G)$, we see that for any $S \in C^*_{uq}(G(X))$, the family $\{(\Phi_x(S))\}_{x\in \partial \beta X}$ from the slicing map \eqref{EQ:slicing map} determines an operator $\tilde S \in C^*_{uq}(\G)$. Also \cite[Lemma 3.23]{JZZ23} shows that if $S \in C^*_{uq}(G(X))^{G(X)}$, then $\tilde S \in C^*_{uq}(\G)^{\G}$. Hence we obtain a $C^\ast$-homomorphism $\rho$ defined to be the composition
\[
\rho: C^*_{uq}(X) \stackrel{\cong}{\longrightarrow} C^*_{uq}(G(X))^{G(X)} \longrightarrow C^*_{uq}(\G)^{\G}.
\]
For $T \in C^*_{uq}(X)$ and $\omega \in \partial \beta X$, simply write $T_{\omega}:=\Phi_\omega(\rho(T)) \in \B(\ell^2(X(\omega)))$. 
Let us compute each $T_\omega$ more precisely: For $\alpha, \beta \in X(\omega)$, choose partial translations $t_\alpha: D_\alpha \to R_\alpha$ and $t_\beta: D_\beta \to R_\beta$ with $D_\alpha, D_\beta \in \omega$, $t_\alpha(\omega) = \alpha$ and $t_\beta(\omega) = \beta$. Then
\begin{equation}\label{EQ:T omega}
(T_\omega)_{\alpha, \beta} = \lim_{x\to \omega} T_{t_\alpha(x), t_\beta(x)}.
\end{equation}
Therefore, the proof of $\ker(\rho) = G_{uq}(X)$ follows exactly as in the proof of Proposition~\ref{prop:ghost intersection of limit op}. 
\end{proof}

\begin{rem}\label{rem:limit spaces and operators}
Since $\H_0 = \CC$, we use the classical limit operators from \cite{GQW24, SW17} rather than using ultrapower as in Section~\ref{sec:quotient isom}. It is clear from \eqref{EQ:T omega} that the operator $T_\omega$ above is the limit operator of $T$ at $\omega$.
\end{rem}

Back to the setting of Theorem~\ref{thm:quotient isom intro}, let $\{X_n\}_n$ be a sequence of finite graphs with bounded valency and large girth, and $X$ be their coarse disjoint union. If the groupoid $G(X)|_{\partial \beta X}$ is amenable, then \cite[Theorem E]{JZZ23} shows that \eqref{EQ:isom groupoid} holds. Combining with Proposition~\ref{prop:quotient groupoid version}, we will conclude the proof of Theorem~\ref{thm:quotient isom intro}. 
Unfortunately, Willett showed in \cite{Wil11} that if every vertex has valency at least $3$, then $X$ does not have property A. In this case, it is well-known (see, \emph{e.g.}, \cite[Proposition 5.1]{WZ25}) that the groupoid $G(X)|_{\partial \beta X}$ cannot be amenable. Hence, \cite[Theorem E]{JZZ23} cannot be directly used to prove \eqref{EQ:isom groupoid}.

\begin{prop}\label{prop:isomorphism non-amenable}
Let $\{X_n\}_n$ be a sequence of finite graphs with large girth and valency bounded by $D$, and $X$ be their coarse disjoint union. For the groupoid $\G:=G(X)|_{\partial \beta X}$, we have
\[
C^*_r(\G) \cong C^*_{uq}(\G)^{\G}.
\]
\end{prop}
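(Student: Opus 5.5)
The natural inclusion $C^*_r(\G)\hookrightarrow C^*_{uq}(\G)^{\G}$ is an isometric $*$-homomorphism, so it suffices to show that its image is dense. I would mimic the proof of Theorem~\ref{thm:quotient isom intro}, replacing the limit operators by the slices $\Phi_\omega$ of the slicing map \eqref{EQ:slicing map}. The norm of $S\in \L(L^2(\G))$ equals $\sup_{\omega\in\partial\beta X}\|\Phi_\omega(S)\|$. Each $\G_\omega=s^{-1}(\omega)$ is identified with $X(\omega)$ by \cite[Lemma C.3]{SW17}, and hence is a tree of valency at most $D$ by Lemma~\ref{lem:limit spaces of large girth}. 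The metric $d_\omega$ should come from a continuous length function $\ell$ on $\G$: on $\overline{E_r}\cap\G$ set $\ell(\gamma)=\lim d(x,y)$. This is locally constant because $d$ is integer-valued, and $d_\omega(\alpha,\beta)=\ell(\alpha\beta^{-1})$-type formula holds.

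Fix $T\in C^*_{uq}(\G)^{\G}$ and $s\in(0,1)$, $N\ge 0$. I define $T_{s,N}$ slice-wise by
\[
\Phi_\omega(T_{s,N}):=M^{X(\omega)}_{s,N}(\Phi_\omega(T)) \quad\text{for}\quad \omega\in\partial\beta X.
\]
Then I would check three properties of this family:
\begin{enumerate}
\item it is uniformly bounded, by Corollary~\ref{cor:HSS to our case};
\item it is equivariant, since the multiplier depends only on $\ell$, which is invariant under right translation;
\item it comes from an element of $C_c(\G)$, or at least of $C^*_r(\G)$.
\end{enumerate}
For (3), equivariance makes $\Phi_\omega(T)_{\alpha,\beta}=f(\alpha\beta^{-1})$ for some bounded function $f$ on $\G$. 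Continuity of $f$ should follow from quasi-locality via \cite[Corollary 4.6]{JZZ23}, or equivalently from $T$ being the image of an element of $C^*_{uq}(X)$ as in the proof of Proposition~\ref{prop:quotient groupoid version}, where $f$ is the continuous extension of $(x,y)\mapsto T_{x,y}$ restricted to the boundary. Then $T_{s,N}$ corresponds to $f\cdot\phi_{s,N}\circ\ell$. This function is supported in $\overline{E_N}\cap\G$, which is compact, so $T_{s,N}\in C_c(\G)\subseteq C^*_r(\G)$.

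For the approximation, I would estimate
\[
\|T-T_{s,N}\|\le \sup_\omega\|\Phi_\omega(T)-M_s^{X(\omega)}(\Phi_\omega(T))\| + \sup_\omega\|(M_s^{X(\omega)}-M_{s,N}^{X(\omega)})(\Phi_\omega(T))\|.
\]
The second term is small for large $N$, uniformly in $\omega$, by Corollary~\ref{cor:HSS to our case}. The first term is small for $s$ close to $1$ by Lemma~\ref{lem:uniform quasilocal and Roe}. That lemma needs the family $\{\Phi_\omega(T)\}_\omega$ to be uniformly quasi-local on the trees $X(\omega)$. This should follow from groupoid quasi-locality: take the compact set $K$ for $\varepsilon$, note $K\subseteq\overline{E_R}$ for some $R$, and test with indicator functions of $A,B\subseteq \G_\omega$ with $d_\omega(A,B)>R$, extended to clopen sets.

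I expect the main obstacle to be exactly this translation between groupoid quasi-locality, where the functions $f,g$ live on all of $\G$, and the slice-wise $(\varepsilon,R)$-quasi-locality on each tree. Extending subsets of a single fibre $\G_\omega$ to functions in $C_b(\G)$ with the required support separation is delicate, which is why I would first try to route everything through Proposition~\ref{prop:quotient groupoid version} and Lemma~\ref{lem:matrix boundedness}. The identification of $\G$-equivariant operators with functions on $\G$ also needs care, but it is standard from \cite{JZZ23}.
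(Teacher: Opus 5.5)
Your overall architecture matches the paper's proof:
\begin{itemize}
\item identify each fibre $\G_\omega$ with the tree $X(\omega)$ via the locally constant length function $\ell$;
\item truncate the convolver to $(\phi_{s,N}\circ\ell)\cdot f_T\in C_c(\G)$ and observe that its slices are $M^{X(\omega)}_{s,N}(\Phi_\omega(T))$;
\item conclude from the isometry of the slicing map together with Corollary~\ref{cor:HSS to our case} and Lemma~\ref{lem:uniform quasilocal and Roe}.
\end{itemize}

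What does not work is the detour through Proposition~\ref{prop:quotient groupoid version}. You offer it as an ``equivalent'' source for the continuity of the convolver, and as your preferred route to the uniform quasi-locality of $\{\Phi_\omega(T)\}_\omega$. That proposition only provides an \emph{embedding} $C^*_{uq}(X)/G_{uq}(X)\hookrightarrow C^*_{uq}(\G)^{\G}$. Nothing tells you that an arbitrary $T\in C^*_{uq}(\G)^{\G}$ comes from an operator on $\ell^2(X)$, so Lemma~\ref{lem:matrix boundedness} cannot be applied to it. In fact, surjectivity of that embedding is a \emph{consequence} of the statement you are proving (combined with Proposition~\ref{prop:quotient groupoid version}), so this route is circular. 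Both facts must be obtained intrinsically on $\G$. The paper cites \cite[Proposition 3.31]{JZZ23} for $T=\Lambda(f_T)$ with $f_T\in C_b(\G)$, and \cite[Lemma 5.7]{JZZ23} for the uniform quasi-locality of the slices.

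Your direct idea for the latter is not actually delicate, and it closes the gap.
\begin{itemize}
\item Given $\varepsilon>0$, take the compact $K$ from groupoid quasi-locality and $R$ with $K\subseteq K_R:=\overline{E_R}\cap\G$. This is possible because the clopen sets $K_r$ increase and exhaust $\G$.
\item For finite $A,B\subseteq\G_\omega$ with $d_\omega(A,B)>R$, you have $\ell(\beta\alpha^{-1})>R$ for all $\alpha\in A$, $\beta\in B$.
\item Since $\ell$ is locally constant with $\ell(\gamma^{-1})=\ell(\gamma)$, multiplication is continuous, and $\G$ is ample, you can choose compact open bisections $U_\alpha\ni\alpha$, $U_\beta\ni\beta$ such that $\ell>R$ on all composable products $\gamma'\gamma^{-1}$ with $\gamma'\in U_\beta$, $\gamma\in U_\alpha$.
\item Set $\tilde A:=\bigcup_\alpha U_\alpha$ and $\tilde B:=\bigcup_\beta U_\beta$. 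These are clopen, and they meet $\G_\omega$ exactly in $A$ and $B$, since a bisection meets each source fibre at most once. They also satisfy $(K\tilde A)\cap\tilde B=\emptyset=\tilde A\cap(K\tilde B)$.
\item Hence $\|\chi_B\Phi_\omega(T)\chi_A\|=\|\Phi_\omega(\chi_{\tilde B}T\chi_{\tilde A})\|\le\|\chi_{\tilde B}T\chi_{\tilde A}\|<\varepsilon$, with $R$ independent of $\omega$. Arbitrary $A,B$ follow by taking suprema over finite subsets.
\end{itemize}
With this argument (or the citation) in place of the Proposition~\ref{prop:quotient groupoid version} detour, your proof coincides with the paper's.
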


\begin{proof}
Set $K_n:=\overline{E_n}^{\beta(X \times X)} \cap \G$ for each $n\in \NN_0$, which is clopen and satisfies (M.1)-(M.3) in \cite[Section 5]{JZZ23}. Define the length function $\ell: \G \to \NN_0$ by 
\[
\ell(\gamma):=\min\{n\in \NN_0: \gamma \in K_n\} \quad \text{for} \quad \gamma \in \G,
\]
which is continuous on $\G$. Following \cite[Section 5]{JZZ23}, define a metric on $X(\omega)$ by
\[
(\alpha, \beta) \mapsto \ell(\alpha \beta^{-1}) \quad \text{for} \quad \alpha, \beta \in X(\omega).
\]
By \cite{JZZ23, SW17}, this metric coincides with the limit metric $d_\omega$ from \eqref{EQ:limit metric}. Lemma \ref{lem:limit spaces of large girth} shows that $X(\omega)$ is a tree with valency bounded by $D$.

Fix $T \in C^*_{uq}(\G)^{\G}$. By \cite[Lemma 5.7]{JZZ23}, the family $\{\Phi_\omega(T)\}_{\omega \in \partial \beta X}$ from \eqref{EQ:slicing map} is uniformly quasi-local. 
On the other hand, \cite[Proposition 3.31]{JZZ23} shows that there is a left convolver $f_T\in C_b(\G)$ such that $T = \Lambda(f_T)$. For each $N \in \NN$ and $s\in (0,1)$, define the function $f_{s,N}$ on $\G$ by 
\[
f_{s,N}(\gamma):=s^{\ell(\gamma)} \chi_{K_N}(\gamma)f_T(\gamma) \quad \text{for} \quad \gamma \in \G.
\]
Since $K_N$ is compact and clopen, $f_{s,N} \in C_c(\G)$ and hence $T_{s,N}:=\Lambda(f_{s,N}) \in C^*_r(\G)$.

Fixing $\omega \in \partial \beta X$, direct calculations show that for any $\alpha, \beta \in X(\omega)$, we have
\begin{align*}
\left(\Phi_\omega(T_{s,N})\right)_{\alpha,\beta} &= f_{s,N}(\alpha\beta^{-1}) = s^{\ell(\alpha\beta^{-1})} \cdot \chi_{K_N}(\alpha\beta^{-1})\cdot f_T(\alpha\beta^{-1})\\
&= s^{d_\omega(\alpha,\beta)} \cdot \chi_{[0,N]}(d_\omega(\alpha,\beta)) \cdot \Phi_\omega(T)_{\alpha,\beta}\\
&= \left( M_{s,N}^{X(\omega)}(\Phi_\omega(T)) \right)_{\alpha,\beta}.
\end{align*}
Hence we conclude that $\Phi_\omega(T_{s,N}) = M_{s,N}^{X(\omega)}(\Phi_\omega(T))$. 
Since the slicing map \eqref{EQ:slicing map} is an isometry, we obtain
\begin{align*}
\|T_{s,N} - T\| &= \sup_{\omega \in \partial \beta X} \left\| M_{s,N}^{X(\omega)}(\Phi_\omega(T)) - \Phi_\omega(T) \right\| \\
&\leq \sup_{\omega \in \partial \beta X} \left\| M_{s,N}^{X(\omega)}(\Phi_\omega(T)) - M_{s}^{X(\omega)}(\Phi_\omega(T)) \right\| + \sup_{\omega \in \partial \beta X} \left\| M_{s}^{X(\omega)}(\Phi_\omega(T)) - \Phi_\omega(T) \right\|.
\end{align*}
Applying Corollary~\ref{cor:HSS to our case} and Lemma~\ref{lem:uniform quasilocal and Roe}, we obtain that $\|T_{s,N} - T\|$ can be made arbitrarily small, which concludes that $T \in C^*_r(\G)$.
\end{proof}

Combining this with Proposition~\ref{prop:quotient groupoid version} will complete the proof of Theorem~\ref{thm:quotient isom  intro} for the case of $\H_0=\CC$.

\bibliographystyle{plain}
\bibliography{reference_updated1}

\begin{thebibliography}{10}

\bibitem{AGS12}
Goulnara Arzhantseva, Erik Guentner, and J\'{a}n \v{S}pakula.
\newblock Coarse non-amenability and coarse embeddings.
\newblock {\em Geom. Funct. Anal.}, 22(1):22--36, 2012.

\bibitem{BCZ23}
Hengda Bao, Xiaoman Chen, and Jiawen Zhang.
\newblock Strongly quasi-local algebras and their {$K$}-theories.
\newblock {\em J. Noncommut. Geom.}, 17(1):241--285, 2023.

\bibitem{BBFKVW22}
Florent~P. Baudier, Bruno~M. Braga, Ilijas Farah, Ana Khukhro, Alessandro
  Vignati, and Rufus Willett.
\newblock Uniform {Roe} algebras of uniformly locally finite metric spaces are
  rigid.
\newblock {\em Invent. Math.}, 230(3):1071--1100, 2022.

\bibitem{BFV24}
Bruno~M. Braga, Ilijas Farah, and Alessandro Vignati.
\newblock Operator norm localization property for equi-approximable families of
  projections.
\newblock {\em J. Noncommut. Geom.}, 18(2):657--680, 2024.

\bibitem{BSV26}
Bruno~M. Braga, J{\'a}n {\v{S}}pakula, and Alessandro Vignati.
\newblock A note on the quasi-local algebra of expander graphs.
\newblock {\em Bull. Lond. Math. Soc.}, 58(1):e70252, 2026.

\bibitem{CGZ24}
Xiaoman Chen, Kun Gao, and Jiawen Zhang.
\newblock The strongly quasi-local coarse {Novikov} conjecture and {Banach}
  spaces with {Property (H)}.
\newblock {\em Kyoto J. Math.}, 64(1):31--74, 2024.

\bibitem{Eng14}
Alexander Engel.
\newblock {\em Indices of pseudodifferential operators on open manifolds}.
\newblock PhD thesis, Universit{\"a}t Augsburg, 2014.
\newblock arXiv:1410.8030.

\bibitem{Eng19}
Alexander Engel.
\newblock Rough index theory on spaces of polynomial growth and
  contractibility.
\newblock {\em J. Noncommut. Geom.}, 13(2):617--666, 2019.

\bibitem{Fin14}
Martin Finn-Sell.
\newblock Fibred coarse embeddings, a-{T}-menability and the coarse analogue of
  the {Novikov} conjecture.
\newblock {\em J. Funct. Anal.}, 267(10):3758--3782, 2014.

\bibitem{GQW24}
Liang Guo, Jin Qian, and Qin Wang.
\newblock A nonstandard analysis approach to limit operators and {Fredholmness}
  in {Roe}-like algebras.
\newblock arXiv:2412.08130, 2024.

\bibitem{HSS10}
Uffe Haagerup, Troels Steenstrup, and Ryszard Szwarc.
\newblock {Schur} multipliers and spherical functions on homogeneous trees.
\newblock {\em Int. J. Math.}, 21(10):1337--1382, 2010.

\bibitem{HRY93}
Nigel Higson, John Roe, and Guoliang Yu.
\newblock A coarse {Mayer--Vietoris} principle.
\newblock {\em Math. Proc. Cambridge Philos. Soc.}, 114(1):85--97, 1993.

\bibitem{JZZ23}
Baojie Jiang, Jiawen Zhang, and Jianguo Zhang.
\newblock Quasi-locality for {\'e}tale groupoids.
\newblock {\em Commun. Math. Phys.}, 403(1):329--379, 2023.

\bibitem{KLVZ21}
Ana Khukhro, Kang Li, Federico Vigolo, and Jiawen Zhang.
\newblock On the structure of asymptotic expanders.
\newblock {\em Adv. Math.}, 393:108073, 2021.

\bibitem{LNSZ21}
Kang Li, Piotr Nowak, J\'{a}n \v{S}pakula, and Jiawen Zhang.
\newblock Quasi-local algebras and asymptotic expanders.
\newblock {\em Groups Geom. Dyn.}, 15(2):655--682, 2021.

\bibitem{LSZ24}
Kang Li, J{\'a}n {\v{S}}pakula, and Jiawen Zhang.
\newblock Measured expanders.
\newblock {\em J. Topol. Anal.}, 16(6):917--944, 2024.

\bibitem{LVZ23}
Kang Li, Federico Vigolo, and Jiawen Zhang.
\newblock Asymptotic expansion in measure and strong ergodicity.
\newblock {\em J. Topol. Anal.}, 15(2):361--399, 2023.

\bibitem{LVZ23b}
Kang Li, Federico Vigolo, and Jiawen Zhang.
\newblock A {Markovian} and {Roe}-algebraic approach to asymptotic expansion in
  measure.
\newblock {\em Banach J. Math. Anal.}, 17(4):74, 2023.

\bibitem{LSZ23}
Kang Li, J\'{a}n \v{S}pakula, and Jiawen Zhang.
\newblock Measured asymptotic expanders and rigidity for {Roe} algebras.
\newblock {\em Int. Math. Res. Not. IMRN}, 2023(17):15102--15154, 2023.

\bibitem{LWZ25}
Xulong Lu, Qin Wang, and Jiawen Zhang.
\newblock Asymptotic expansion for groupoids and {Roe}-type algebras.
\newblock {\em J. Noncommut. Geom.}, 2026.
\newblock Published online first.

\bibitem{LPS88}
Alexander Lubotzky, Ralph Phillips, and Peter Sarnak.
\newblock {Ramanujan} graphs.
\newblock {\em Combinatorica}, 8(3):261--277, 1988.

\bibitem{MM02}
Ji{\v{r}}{\'\i} Matou{\v{s}}ek.
\newblock {\em Lectures on discrete geometry}, volume 212 of {\em Graduate
  Texts in Mathematics}.
\newblock Springer-Verlag, New York, 2002.

\bibitem{NWZ25}
Graham~A. Niblo, Nick Wright, and Jiawen Zhang.
\newblock Building weight-free {F{\o}lner} sets for {Yu}'s {Property A} in
  coarse geometry.
\newblock {\em Algebr. Geom. Topol.}, 25(5):3133--3144, 2025.

\bibitem{Oza25}
Narutaka Ozawa.
\newblock Embeddings of matrix algebras into uniform {Roe} algebras and
  quasi-local algebras.
\newblock {\em J. Eur. Math. Soc.}, 2025.
\newblock Published online first.

\bibitem{Pis01}
Gilles Pisier.
\newblock {\em Similarity problems and completely bounded maps}, volume 1618 of
  {\em Lecture Notes in Mathematics}.
\newblock Springer-Verlag, Berlin, second, expanded edition, 2001.

\bibitem{Roe88}
John Roe.
\newblock An index theorem on open manifolds. {I}.
\newblock {\em J. Differential Geom.}, 27(1):87--113, 1988.

\bibitem{Roe93}
John Roe.
\newblock Coarse cohomology and index theory on complete {Riemannian}
  manifolds.
\newblock {\em Mem. Amer. Math. Soc.}, 104(497), 1993.

\bibitem{Roe96}
John Roe.
\newblock {\em Index theory, coarse geometry, and topology of manifolds},
  volume~90 of {\em CBMS Regional Conference Series in Mathematics}.
\newblock American Mathematical Society, Providence, RI, 1996.

\bibitem{R2003-book-a}
John Roe.
\newblock {\em Lectures on coarse geometry}, volume~31 of {\em University
  Lecture Series}.
\newblock American Mathematical Society, Providence, RI, 2003.

\bibitem{STY02}
Georges Skandalis, Jean-Louis Tu, and Guoliang Yu.
\newblock The coarse {Baum--Connes} conjecture and groupoids.
\newblock {\em Topology}, 41(4):807--834, 2002.

\bibitem{SZ20}
J{\'a}n {\v S}pakula and Jiawen Zhang.
\newblock Quasi-locality and {Property A}.
\newblock {\em J. Funct. Anal.}, 278(1):108299, 2020.

\bibitem{Vig26}
Alessandro Vignati.
\newblock {\em Rigidity of {Roe}-like algebras}.
\newblock Habilitation thesis, Université Paris Cité, 2026.

\bibitem{Spa09}
J\'{a}n \v{S}pakula.
\newblock Uniform {$K$}-homology theory.
\newblock {\em J. Funct. Anal.}, 257(1):88--121, 2009.

\bibitem{SW17}
J\'{a}n \v{S}pakula and Rufus Willett.
\newblock A metric approach to limit operators.
\newblock {\em Trans. Amer. Math. Soc.}, 369(1):263--308, 2017.

\bibitem{WZ25}
Qin Wang and Jiawen Zhang.
\newblock Ideal structure of uniform {Roe} algebras: beyond {Property A}.
\newblock {\em Proc. Roy. Soc. Edinburgh Sect. A}, pages 1--39, 2025.
\newblock Published online first.

\bibitem{WFZ25}
Zhijie Wang, Benyin Fu, and Jiawen Zhang.
\newblock Fibring structures of ideals in {Roe} algebras and their
  {$K$}-theories.
\newblock {\em Sci. China Math.}, 2025.
\newblock Published online first.

\bibitem{WW20}
Stuart White and Rufus Willett.
\newblock {Cartan} subalgebras in uniform {Roe} algebras.
\newblock {\em Groups Geom. Dyn.}, 14(3):949--989, 2020.

\bibitem{Wil11}
Rufus Willett.
\newblock {Property A} and graphs with large girth.
\newblock {\em J. Topol. Anal.}, 3(3):377--384, 2011.

\bibitem{WY12}
Rufus Willett and Guoliang Yu.
\newblock Higher index theory for certain expanders and {Gromov} monster
  groups, {I}.
\newblock {\em Adv. Math.}, 229(3):1380--1416, 2012.

\bibitem{Yu00}
Guoliang Yu.
\newblock The coarse {Baum--Connes} conjecture for spaces which admit a uniform
  embedding into {Hilbert} space.
\newblock {\em Invent. Math.}, 139(1):201--240, 2000.

\bibitem{ZZ25}
Jingming Zhu and Jiawen Zhang.
\newblock A weight-free characterisation of {Yu}'s {Property A}.
\newblock {\em Bull. Lond. Math. Soc.}, 57(10):3005--3012, 2025.

\end{thebibliography}

\end{document}